\newcommand*\patchAmsMathEnvironmentForLineno[1]{%
  \expandafter\let\csname old#1\expandafter\endcsname\csname #1\endcsname
  \expandafter\let\csname oldend#1\expandafter\endcsname\csname end#1\endcsname
  \renewenvironment{#1}%
     {\linenomath\csname old#1\endcsname}%
     {\csname oldend#1\endcsname\endlinenomath}}%
\newcommand*\patchBothAmsMathEnvironmentsForLineno[1]{%
  \patchAmsMathEnvironmentForLineno{#1}%
  \patchAmsMathEnvironmentForLineno{#1*}}%
\newtheorem{thm}{Theorem}[section] \newtheorem{lem}[thm]{Lemma}
\newtheorem{coro}[thm]{Corollary} 
 \newtheorem{prop}[thm]{Proposition}
 \newtheorem{defi}[thm]{Definition}
\newtheorem{rem*}{Remark}
\newtheorem{rems*}{Remarks}
\newtheorem{problem*}{Problem}
\newtheorem{conj*}{Conjecture}
\newcommand{\sO}{\mathscr{O}}
\newcommand{\cI}{\mathcal{I}}
\DeclareMathOperator{\HH}{H} \DeclareMathOperator{\hh}{h}
\newcommand{\Z}{\mathbb Z} \newcommand{\C}{\mathbb C}
 \newcommand{\p}{\mathbb P}
\numberwithin{equation}{section}
\title[Singular hypersurfaces - Lefschetz properties]{Singular hypersurfaces characterizing the Lefschetz properties}
\author[R. Di Gennaro, G. Ilardi and J. Vall\`es]{Roberta Di Gennaro, Giovanna Ilardi and Jean Vall\`es}
\thanks{Third author partially supported by ANR-09-JCJC-0097-0 INTERLOW,  ANR GEOLMI,  the project \lq\lq F.A.R.O. 2010:
 Algebre di Hopf, differenziali e di vertice in geometria, topologia e
 teorie di campo classiche e quantistiche\rq\rq\ and by the program \lq\lq Scambi internazionali tra l'Universit\`{a} degli
Studi di Napoli Federico II ed Istituti di ricerca stranieri per la mobilit\`{a} di breve durata
di docenti, studiosi e ricercatori\rq\rq.}
\begin{document}
\maketitle

\begin{abstract} In a recent paper \cite{MMO} Miro-Roig, Mezzetti and Ottaviani highlight the link between rational varieties
satisfying a Laplace equation and artinian ideals failing the Weak Lefschetz Property.
Continuing their work we extend this link to the more general
situation of artinian ideals failing the Strong Lefschetz Property.
We characterize the failure of the SLP (which includes WLP) by the existence of
special singular hypersurfaces (cones for WLP).  This characterization allows us to solve three problems posed in
\cite{MN} and to give new examples of ideals failing the SLP.
Finally, line arrangements are related to  artinian ideals and the unstability of the associated  derivation bundle
 is linked  to the failure of the SLP. Moreover
 we reformulate the so-called Terao's conjecture for free  line arrangements in terms of artinian ideals failing the
SLP.
\end{abstract}


\section{Introduction}

The  tangent space  to an integral projective variety $X \subset \mathbb P^N$ of dimension $n$ in a smooth point $P$, named $T_P X$,
 is always of dimension  $n$. It is no longer true for the osculating spaces. For instance, as it was pointed out by Togliatti in \cite{To}, the
 osculating space  $T^2_PX$, in a  general point $P$, of
 the rational surface $X$ defined by
$$ \p^2 \stackrel{\phi}\longrightarrow \p^5, \,\, (x,y,z) \mapsto (xz^2,yz^2,x^2z, y^2z, xy^2,x^2y),$$
 is of projective dimension  $4$ instead of $5$.  Indeed there is a non trivial linear relation
 between the partial derivatives of order $2$ of $\phi$  at $P$ that define $T^2_PX$. This relation is usually called a \textit{Laplace equation} of order $2$.
More generally,
we will say that $X$  satisfies a Laplace equation of order $s$ when its $s$-th osculating space $T^s_PX$ in a general point $P\in X$  is of dimension less than the expected one,  that is $\inf\{N,\binom{n+s}n-1\}$.

The study of the surfaces satisfying a Laplace equation was developed in the last century by  Togliatti \cite{To} and  Terracini  \cite{Te}. Togliatti \cite{To}
gave a complete classification of the rational surfaces embedded
  by linear systems of plane cubics and satisfying a Laplace equation of
order two.

In the paper \cite{P}, Perkinson gives a complete classification of smooth toric surfaces (Theorem 3.2) and threefolds (Theorem 3.5) embedded
  by a monomial linear system and satisfying a Laplace equation of any order.

Very recently Miro-Roig, Mezzetti and Ottaviani \cite{MMO}
have established  a nice link between rational  varieties (i.e. projections  of Veronese varieties) satisfying a Laplace equation
and artinian graded rings $A=\oplus_{0\le i\le s} A_i$ such that the multiplication by a general linear form
has not  maximal rank in  a degree $i$. On the contrary, when the rank of the multiplication map is maximal in any degree, the ring is said to have the
\textit{Weak Lefschetz Property} (briefly WLP).
The same type of problems  arises when we consider  the multiplication by powers $L^k$ ($k\ge 1$) of a general linear form $L$.
Indeed, if the rank of the multiplication map by $L^k$ is maximal for any $k$ and  any degree, the ring is said to have the
\textit{Strong Lefschetz Property} (briefly SLP).
\\ These properties are so called after Stanley's seminal work: the Hard Lefschetz theorem is
used to prove  that the ring $\frac{\C[x_0,\ldots,x_n]}{(x_0^{d_0},\ldots,x_n^{d_n})}$ has the  SLP  \cite[Theorem 2.4]{St}.
From this example one can ask if the artinian complete intersection rings  have the WLP.
Actually  $\frac{\C[x,y,z]}{(F_0,F_1,F_2)}$ has the WLP (first proved  in \cite{HMNW} and then also in \cite{BK})
but   it is still not known  for more than three variables. Many other questions derive from this first example.
\\
For more details about  known results and some open problems we refer to \cite{MN}.
\par

 Let $I=(F_1,\ldots, F_r)$ be an artinian ideal generated by the $r$ forms $F_1,\ldots, F_r$, all of the same degree $d$, and $Syz(I)$ be the \textit{syzygy bundle} associated to $I$ and defined
  in the following way:
 $$  \begin{CD}
  0@>>> Syz(I)(d) @>>>  \sO_{\p^{n}}^{r} @>(F_1, \ldots, F_r)>>
  \sO_{\p^{n}}(d) @>>> 0.
 \end{CD}$$
 For shortness we will denote $K=Syz(I)(d) $ and, forgetting the twist by $d$, in all the rest of this text we call it the syzygy bundle.
 As in   \cite{HMNW},  many papers about the Lefschetz properties  involve the \textit{syzygy bundle}.
  Indeed, in \cite[Proposition 2.1]{BK}, Brenner and Kaid  prove that the graded piece of degree $d+i$ of the artinian ring $A=\frac{\C[x_0,\ldots,x_n]}{(F_0,\ldots ,F_r)}$ is  $\HH^1(K(i))$.
 In
  [\cite{MMO}, thm. 3.2] the authors characterize  the failure of the WLP (in degree $d-1$, i.e. for the map $A_{d-1}\rightarrow A_d$) when $r\le \hh^0(\sO_{L}(d))$
 by the non injectivity of the restricted map
 $$  \begin{CD}
 \HH^0(  \sO_{L})^{r} @>(F_1,\ldots,F_r)>>
  \HH^0(\sO_{L}(d)),
 \end{CD}$$
 on a general hyperplane $L$.
%
%
%
%

Let us say,  in few words, what we are doing in this paper and how  it is organized. First of all we recall some definitions,
basic facts and we propose a conjecture (Section \ref{s1}). In Section \ref{s2} we
 extend to the SLP  the characterization of  failure of the WLP  given in \cite{MMO}.  Then we translate the failure of the WLP and SLP in terms of existence of special singular hypersurfaces  (Section \ref{s3}).
It allows us to  give an answer to three unsolved questions in \cite{MN}. In Section \ref{s4} we
construct examples of artinian rings
failing the WLP and the SLP by producing the appropriate singular hypersurfaces. In the last section we relate the problem of SLP at the range 2 to the topic of line arrangements  (Section \ref{s5}).

Let us now give more details about  the different sections of this paper.
In  Section \ref{s2}, more precisely in Theorem \ref{p1},
 we characterize the failure of the SLP by the non maximality of the induced map on  sections
$$  \begin{CD}
\HH^0(  \sO_{L^k}(i))^{r} @>(F_1,\ldots,F_r)>>
 \HH^0(\sO_{L^k}(i+d)).
\end{CD}$$
The geometric consequences of this link are explained in  Section \ref{s3} (see Theorem \ref{th1bis}). The non injectivity is translated in terms of  the number of Laplace equations
and the non surjectivity is related, via apolarity, to the existence of special singular hypersurfaces.
Then we give Propositions \ref{pr54-1}, \ref{pr54-2} and \ref{pr54-3} that solve three problems posed in  \cite[Problem 5.4 and Conjecture 5.13]{MN}.

In Section \ref{s4} we produce many  examples of ideals (monomial and  non monomial) that fail the WLP and the SLP. The failure of the WLP is studied for monomial ideals generated in degree $4$ on $\p^2$ (Theorem \ref{th3}),
 in degree $5$ on $\p^2$ (Proposition \ref{th4}),  in degree $4$ on $\p^3$ (Proposition \ref{d4m}); the failure of the SLP
is studied for monomial  ideals  generated in degree $4$  (Proposition \ref{d4mslp}); finally, we propose a method to produce non monomial ideals that fail the SLP at any range (Proposition \ref{nmslp}).

In the last section  Lefschetz  properties and line arrangements are linked. The theory of line arrangements, more generally of hyperplane arrangements, is an old and deep subject that concerns
combinatorics, topology and algebraic geometry. One can say that it began with Jakob Steiner (in the first volume of Crelles's journal, 1826) who determined in how many regions a real plane
is divided by a finite number of lines. It is relevant also with Sylvester-Gallai's amazing problem.
Hyperplane arrangements  come back in  a modern presentation in Arnold's fundamental work \cite{A} on the cohomology ring of  $\p^n\setminus D$ (where $D$ is the union of the hyperplanes of the arrangement).
 For a large part of mathematicians working on arrangements, it culminates today with the Terao conjecture (see the last section of this paper or directly \cite{OT}).
This conjecture concerns particularly the derivation sheaf  (also called logarithmic sheaf) associated to the arrangement.  In this paper we  recall the conjecture.
In Proposition \ref{th5}
we prove that the failure of the SLP at the range 2 of some ideals is equivalent to the unstability of the associated derivation sheaves.
Thanks to the important  literature on arrangements, we find artinian ideals that  fail the SLP. For instance the Coxeter arrangement, called B3, gives an original ideal that fails the SLP at the range 2 in a non trivial way (see Proposition \ref{B3}).

We finish by a reformulation of Terao's conjecture in terms of SLP.
\section{Notations}
\noindent The ground field  is $\C$.\\
The dual $\mathrm{Hom}_{\C}(V,\C)$ of  a vector space $V$ is    denoted by $V^*$.\\
The dimension of the vector space $  \mathrm{H}^0(\sO_{\p^n}(t))$ is denoted by $r_t$ where $n$ is clearly known in the context.\\
The vector space generated by the set $E\subset \C^t$ is  $<E>$.\\
The join variety of $s$ projective varieties $X_i\subset \p^n$ is denoted  by $\mathrm{Join}(X_1,\cdots,X_s)$ (see \cite{H} for the definition of join variety).\\
The fundamental points $(1,0,\ldots, 0), (0,1,\ldots, 0), \ldots, (0,0,\ldots, 0,1)$ in $\p^n$ are denoted by $P_0, P_1, \ldots, P_n$.\\
We often write in the same way a projective hyperplane and the linear form defining it;  we use in general  the notation $L_i$ on $\p^n$ and the notation $l_i$ on $\p^2$ for  hyperplanes.\\
The ideal sheaf of a point $P$ is $\cI_P$ .
\section{Lefschetz properties}
\label{s1}
Let
$R=\C[x_0,\ldots, x_n]=\bigoplus R_t$ be the graded polynomial ring in $n+1$ variables over $\C$.  The dimension of the vector space $R_t$ is $r_t$.
\\Let
$$A=R/I= \bigoplus_{i=0}^{m}A_i$$ be a graded artinian algebra, defined by the ideal $I$. Note that $A$ is finite dimensional over $\C$.
\begin{defi}
 The artinian algebra $A$ (or the artinian ideal $I$) has the Weak Lefschetz Property (WLP) if  there exists a linear form $L$  such that the homomorphism induced by the multiplication by $L$,
$$ \times L : A_i \rightarrow A_{i+1},$$
has maximal rank (i.e. is injective or surjective) for all $i$. The artinian algebra $A$ (or the artinian ideal $I$)  has the Strong
Lefschetz Property (SLP) if  there exists a linear form $L$  such that
$$ \times L^k : A_i \rightarrow A_{i+k},$$
has maximal rank (i.e. is injective or surjective) for all $i$ and $k$.
\end{defi}
\begin{rems*}
\begin{itemize}
\item It is clear that the SLP for $k=1$ corresponds to the WLP.
\item  Actually, it can be proved that if a Lefschetz element exists, then there is an open set of such elements, so that one can call \lq\lq general linear form\rq\rq \ such an element.
\item  We will often be interested in artinian rings $A$ that fail the SLP (or WLP), i.e. when
for  any linear form $L$ there exist $i$ and $k$ such that the multiplication map
$$ \times L^k : A_i \rightarrow A_{i+k},$$
has not maximal rank. In that case we will say that $A$ (or $I$) fails the SLP at range $k$ and degree $i$. When $k=1$ we will say simply that $A$ fails the
WLP in degree $i$.
\end{itemize}
\end{rems*}
One of the main  examples  comes from  Togliatti's result (see for instance \cite{BK}, Example 3.1): the ideal $I=(x^3,y^3,z^3,xyz)$ fails the WLP in degree $2$.
There are many ways to prove it.
One of them comes from the polarity on the rational normal cubic curve. It leads to a generalization that gives one of the few known non toric examples.
\begin{prop}(\cite[Theorem 3.1]{V1})
 Let $n\ge 1$ be an integer and $l_1, \ldots, l_{2n+1}$ be non concurrent linear forms on $\p^2$. Then the ideal
$$ (l_1^{2n+1}, \ldots, l_{2n+1}^{2n+1}, \prod_{i=1}^{2n+1}l_i)$$
fails the WLP in degree $2n$.
\end{prop}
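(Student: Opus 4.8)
The plan is to reduce, via the restriction criterion of \cite{MMO} quoted above, the failure of the WLP in degree $2n$ to an elementary statement about binary forms, and then to settle the latter by a determinantal/apolarity argument in which the oddness of the exponent $2n+1$ is decisive.

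First I set $d=2n+1$, so that the ideal is generated by the $r=2n+2$ forms $l_1^{d},\dots,l_{2n+1}^{d}$ and $\prod_i l_i$, all of degree $d$. Since the $l_i$ are non-concurrent they have no common zero on $\p^2$, so the ideal is artinian and the WLP is meaningful. The relevant map is $\times L\colon A_{d-1}\to A_{d}$, i.e. the one in degree $d-1=2n$. Because $r=2n+2=\HH^0(\sO_L(d))$, the hypothesis $r\le \HH^0(\sO_L(d))$ of the cited theorem holds (with equality), and failure of the WLP in degree $2n$ is equivalent to the non-injectivity, for a general line $L$, of the restriction map
$$\HH^0(\sO_L)^{r}\xr{(l_1^{d},\dots,\prod_i l_i)}\HH^0(\sO_L(d)).$$
As $\HH^0(\sO_L)=\C$, this sends $(c_i)$ to $\sum_i c_i(\text{generator})|_L$; it is a map between two spaces of the same dimension $2n+2$, so non-injectivity is the same as linear dependence of the restricted generators.

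Next I translate this onto $L\cong\p^1$. For general $L$ the restrictions $\ell_i:=l_i|_L$ are pairwise non-proportional linear forms on $\p^1$ (a general $L$ misses all the points $l_i\cap l_j$), and the restricted generators are $\ell_1^{d},\dots,\ell_{2n+1}^{d}$ together with $\prod_i\ell_i$, viewed as $2n+2$ binary forms of degree $d$ in the $(2n+2)$-dimensional space $\HH^0(\sO_{\p^1}(d))$. Since distinct points of the rational normal curve of degree $d$ are in general position, the $d$ powers $\ell_1^{d},\dots,\ell_{2n+1}^{d}$ are linearly independent and span a hyperplane. Hence the desired dependence is equivalent to the membership
$$\prod_{i=1}^{2n+1}\ell_i\ \in\ \langle \ell_1^{d},\dots,\ell_{2n+1}^{d}\rangle,$$
which is exactly the polarity statement for the rational normal curve alluded to above and is the heart of the matter.

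Finally I prove this membership. Writing $\ell_i=a_is+b_it$, I look for constants $c_i$ such that $F:=\sum_i c_i\ell_i^{d}$ vanishes at the $d$ points $(b_k,-a_k)$ (the zeros of the $\ell_k$). These conditions read $Mc=0$, where $M=\big((a_ib_k-a_kb_i)^{d}\big)_{k,i}$. Because $d=2n+1$ is odd, $M$ is antisymmetric of odd size $2n+1$, so $\det M=0$ and a nonzero solution $c$ exists. The resulting $F$ is a binary form of degree $d$ vanishing at the $d$ distinct points $(b_k,-a_k)$, hence $F=\lambda\prod_i\ell_i$; and $F\neq0$ because the $\ell_i^{d}$ are independent and $c\neq0$, so $\lambda\neq0$ and $\prod_i\ell_i=\lambda^{-1}\sum_i c_i\ell_i^{d}$ lies in the span. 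This yields the required dependence, and the WLP fails in degree $2n$. The main obstacle is precisely this last membership: everything hinges on recognizing that the natural incidence matrix is antisymmetric and that its odd size---forced by the exponent being $2n+1$---makes its determinant vanish; for an even number of forms the analogous matrix is not singular and the statement is false.
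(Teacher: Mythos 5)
Your proposal is correct and follows the same route as the paper: restrict to a general line via the criterion of \cite{MMO} and exhibit the linear dependence of the $2n+2$ restricted forms using the polarity on the rational normal curve of odd degree $2n+1$. Your antisymmetric-matrix computation (odd size forcing $\det M=0$) is exactly the explicit content of the ``polarity'' argument the paper invokes from \cite{V1}, so you have simply filled in the details the paper delegates to that reference.
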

 Indeed on the general line $l$ the $2n+2$ forms of degree $2n+1$
become dependent thanks to the polarity on the rational normal curve of degree $2n+1$.
 We propose the following conjecture. For $n=1$ it is again  Togliatti's result.
\begin{conj*}
Let $l_1,\ldots,l_{2n+1} $ be non concurrent linear forms on $\p^2$ and $f$ be a form of degree $2n+1$ on $\p^2$.
Then the ideal $(l_1^{2n+1}, \ldots, l_{2n+1}^{2n+1}, f)$ fails the WLP in degree $2n$ if and only if
$f\in (l_1^{2n+1}, \ldots, l_{2n+1}^{2n+1},\prod_{i=1}^{2n+1}l_i).$
\end{conj*}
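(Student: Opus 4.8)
The plan is to reduce the WLP statement to a restriction condition on a general line, using the characterization recalled from \cite{MMO}. Here all forms have degree $d=2n+1$, there are $r=2n+2$ of them, and a general hyperplane of $\p^2$ is a line $L\cong\p^1$ with $\hh^0(\sO_L(d))=2n+2=r$. Thus \cite[Theorem 3.2]{MMO} applies, and since the source and target of $\HH^0(\sO_L)^{2n+2}\to\HH^0(\sO_L(2n+1))$ have equal dimension, failure of the WLP in degree $2n$ is equivalent to the linear dependence, on a general line $L$, of the restricted forms $l_1^{2n+1}|_{L},\ldots,l_{2n+1}^{2n+1}|_{L},f|_{L}$. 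Because the $l_i$ are non concurrent, for general $L$ the restrictions $l_i|_{L}$ are pairwise non proportional, so the $2n+1$ powers $l_i^{2n+1}|_{L}$ are independent (they are $(2n+1)$-st powers of distinct points of $\p^1$) and span a hyperplane $W_L\subset\HH^0(\sO_L(2n+1))$. Hence the WLP fails in degree $2n$ if and only if $f|_{L}\in W_L$ for general $L$.

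The ``if'' direction is then immediate. If $f\in(l_1^{2n+1},\ldots,l_{2n+1}^{2n+1},\prod_i l_i)$ then, comparing degrees, $f=\sum_i c_i l_i^{2n+1}+c\prod_i l_i$ for scalars $c_i,c$. Applying the same reduction to the ideal $(l_1^{2n+1},\ldots,l_{2n+1}^{2n+1},\prod_i l_i)$, which fails the WLP in degree $2n$ by the preceding Proposition \cite{V1}, shows that $l_1^{2n+1}|_{L},\ldots,l_{2n+1}^{2n+1}|_{L},(\prod_i l_i)|_{L}$ are dependent on a general $L$; since the powers are independent this forces $(\prod_i l_i)|_{L}\in W_L$. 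Therefore $f|_{L}=\sum_i c_i\, l_i^{2n+1}|_{L}+c\,(\prod_i l_i)|_{L}\in W_L$, and the WLP fails.

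For the converse I would reformulate the condition globally. Set $B=R/(l_1^{2n+1},\ldots,l_{2n+1}^{2n+1})$. Since the image of $(l_1^{2n+1},\ldots,l_{2n+1}^{2n+1})_{2n+1}$ under restriction to $L$ is precisely $W_L$, which has codimension $1$, restriction induces a functional $\rho_L\in B_{2n+1}^{*}$, well defined up to scalar, with $f|_{L}\in W_L\iff\rho_L(\bar f)=0$. The ``if'' direction shows $\overline{\prod_i l_i}\in\bigcap_L\ker\rho_L$, and the statement to prove is the reverse inclusion $\bigcap_L\ker\rho_L=\langle\overline{\prod_i l_i}\rangle$. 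Equivalently, as $L$ ranges over the general line of $\check{\p}^2$, the functionals $\rho_L$ must span a hyperplane of $B_{2n+1}^{*}$, that is, a subspace of codimension exactly $1$.

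The main obstacle is precisely this last non degeneracy statement. For fixed $g\in R_{2n+1}$ the value $\rho_L(\bar g)=\langle\Phi_L,g|_{L}\rangle$, where $\Phi_L$ spans the line $W_L^{\perp}$ annihilating $W_L$ in the apolarity pairing on $\p^1$, is a regular function of $L$, and one must show that it vanishes identically in $L$ only when $\bar g\in\langle\overline{\prod_i l_i}\rangle$. This is where the apolarity dictionary and the singular hypersurface picture of Section~\ref{s3} (Theorem~\ref{th1bis}) should be brought to bear: the vanishing of $\rho_L(\bar g)$ for all $L$ says that a hypersurface in $\check{\p}^2$ attached to $g$ is everything, and the content is to control how $W_L$ (equivalently the apolar form $\Phi_L$ of the moving points $l_i\cap L$) varies with $L$. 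For $n=1$ this reduces to the classical Togliatti computation and can be checked by hand; establishing the required codimension $1$ for all $n$ is the crux of the conjecture, and I expect it to demand a careful dimension count for the family $\{\rho_L\}$ together with an irreducibility or genericity argument ruling out any common zero of the $\rho_L$ beyond $\prod_i l_i$.
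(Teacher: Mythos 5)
The statement you are asked to prove is presented in the paper as a \emph{conjecture}: the authors give no proof of it (they only remark that the case $n=1$ is Togliatti's theorem), so there is no argument of theirs to compare yours against. Your reduction is nevertheless the natural one and sits squarely inside the paper's framework: with $d=2n+1$ and $r=2n+2=\hh^0(\sO_L(d))$, Theorem \ref{p1} (equivalently \cite[Theorem 3.2]{MMO}) converts failure of the WLP in degree $2n$ into linear dependence of the restricted generators on a general line $L$; since distinct points of the rational normal curve of degree $2n+1$ are linearly independent, the $2n+1$ restricted powers span a hyperplane $W_L$, and the condition becomes $f|_L\in W_L$. Your ``if'' direction is correct: it combines this with \cite[Theorem 3.1]{V1} (the proposition stated just before the conjecture), which guarantees $(\prod_i l_i)|_L\in W_L$, so every degree-$(2n+1)$ element of $(l_1^{2n+1},\ldots,l_{2n+1}^{2n+1},\prod_i l_i)$ restricts into $W_L$.

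The genuine gap is the converse, and you identify it yourself: one must show that, as $L$ varies, the functionals $\rho_L$ cutting out $W_L$ span a subspace of codimension exactly one in $B_{2n+1}^{*}$, i.e.\ that $\prod_i l_i$ is, modulo the span of the powers $l_i^{2n+1}$, the \emph{only} form of degree $2n+1$ whose restriction to a general line lies in $W_L$. Your write-up reformulates this cleanly but supplies no proof: no dimension count for the family $\{\rho_L\}$, no irreducibility or genericity argument excluding further common zeros, and the $n=1$ verification is only asserted. Since this non-degeneracy statement is precisely the content of the conjecture --- which remains open in the paper --- what you have is an honest partial result (one correct implication plus a useful reformulation of the other), not a proof.
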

\section{Lefschetz properties and the syzygy bundle}
\label{s2}
In  \cite[Proposition 2.3]{MMO}, the failure of the WLP in degree $d-1$ is related to the restriction of the syzygy bundle  to a general hyperplane.
 Here we extend this relationship to the SLP situation at any range and in many degrees,  by using the syzygy bundle method originated in \cite{HMNW}.
\begin{thm}
\label{p1}
 Let $I=(F_1, \ldots, F_r) \subset R$ be an artinian ideal generated by   homogeneous
forms of degree $d$ and  $K$ the syzygy bundle defined by the exact sequence
$$  \begin{CD}
 0@>>> K @>>>  \sO_{\p^{n}}^{r} @>\Phi_{I}>>
 \sO_{\p^{n}}(d) @>>> 0,
\end{CD}$$
where $\Phi_{I}(a_1,\ldots,a_r)=a_1F_1+\ldots+a_rF_r.$
Let $i$ be a non-negative integer such that $ \mathrm{h}^0( K(i))=0$ and $k$ be an integer such that $k\ge 1$.
Then $I$ fails the SLP at the range  $k$ in degree $d+ i-k$  if and only if the induced homomorphism on sections
(denoted by $\mathrm{H}^0(\Phi_{I,L^k})$)
$$  \begin{CD}
  \mathrm{H}^0(  \sO_{L^k}(i))^r @>\mathrm{H}^0(\Phi_{I,L^k})>>
 \mathrm{H}^0(  \sO_{L^k}(i+d))
\end{CD}$$
has not maximal rank for a general linear form $L$.
\end{thm}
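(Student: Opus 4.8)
The plan is to realize both maps in the statement as the two end-maps of a single six-term exact sequence, produced by the snake lemma from a map of short exact sequences. Fix a linear form $L$ and restrict the two constituents $\sO_{\p^n}^r$ and $\sO_{\p^n}(d)$ of the syzygy sequence to the thick hyperplane $L^k$. First I would write down the two ``restriction'' sequences
$$0 \to \sO_{\p^n}(i-k)^r \xrightarrow{\times L^k} \sO_{\p^n}(i)^r \to \sO_{L^k}(i)^r \to 0,$$
$$0 \to \sO_{\p^n}(i+d-k) \xrightarrow{\times L^k} \sO_{\p^n}(i+d) \to \sO_{L^k}(i+d) \to 0,$$
and observe that $\Phi_I$ (in the two twists) together with $\Phi_{I,L^k}$ makes these into a map of short exact sequences of sheaves. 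Taking global sections and using $\HH^1(\sO_{\p^n}(t))=0$, the two rows remain short exact on $\HH^0$; the right-hand vertical map is then exactly the map $\HH^0(\Phi_{I,L^k})$ of the statement, while the left and middle vertical maps are $\Phi_I$ on $R_{\bullet}^r \to R_{\bullet+d}$.

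Next I would read off the kernels and cokernels of the three vertical maps, writing $\ker_j,\cok_j$ for those of the $j$-th column ($j=1,2,3$ from left to right). The kernel of $\Phi_I\colon R_t^r \to R_{t+d}$ is the space of degree-$t$ syzygies, i.e. $\HH^0(K(t))$, and its cokernel is $R_{t+d}/I_{t+d}=A_{t+d}$; applied at $t=i-k$ and $t=i$ this gives $\cok_1 = A_{d+i-k}$ and $\cok_2 = A_{d+i}$, with the induced map between them being precisely multiplication by $L^k$, the Lefschetz map at range $k$ and degree $d+i-k$. The snake lemma then yields
$$0 \to \ker_1 \to \ker_2 \to \ker_3 \xrightarrow{\ \delta\ } A_{d+i-k} \xrightarrow{\times L^k} A_{d+i} \to \cok_3 \to 0,$$
where $\ker_3 = \ker \HH^0(\Phi_{I,L^k})$ and $\cok_3 = \cok \HH^0(\Phi_{I,L^k})$.

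Here the hypothesis enters decisively: $\ker_2 = \HH^0(K(i)) = 0$ because $\hh^0(K(i))=0$. Exactness at $\cok_2=A_{d+i}$ immediately gives $\cok_3 \cong \cok(\times L^k)$, so $\times L^k$ is surjective exactly when $\HH^0(\Phi_{I,L^k})$ is. Exactness at $\ker_3$ forces $\delta$ to be injective (its kernel is the image of $\ker_2=0$), whence $\ker(\times L^k)=\operatorname{im}\delta \cong \ker_3 = \ker \HH^0(\Phi_{I,L^k})$, so the two maps are injective simultaneously. Consequently $\times L^k$ has maximal rank if and only if $\HH^0(\Phi_{I,L^k})$ does, for each fixed $L$. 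Negating this and invoking the openness of the Lefschetz locus (so that ``fails for every $L$'' is equivalent to ``fails for general $L$'') gives the asserted equivalence.

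I expect the genuine work to be bookkeeping rather than a deep idea. The main point to verify carefully is that the connecting and induced maps are \emph{literally} the multiplication by $L^k$ on $A$, i.e. the naturality of the identification $A_{t+d}=\cok\Phi_I$ with respect to the vertical $\times L^k$; and one must track exactly which vanishings are needed, namely $\HH^1(\sO_{\p^n}(t))=0$ to keep the rows short exact, together with the identification $\HH^0(\sO_{L^k}(t))=(R/(L^k))_t$ so that $\cok_3$ really is the cokernel of the restricted map (this is where $n\ge 2$ is implicitly used, the case $n=1$ being degenerate). The hypothesis $\hh^0(K(i))=0$ is precisely what kills $\ker_2$ and makes the kernel comparison clean; without it an extra term $\cok(\ker_2 \to \ker_3)$ would obstruct the injectivity half of the argument.
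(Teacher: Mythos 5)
Your proof is correct, but it takes a different (and in fact cleaner) route than the paper. The paper restricts the syzygy bundle itself, using the sequence $0\to K(i-k)\xrightarrow{\times L^k} K(i)\to K\otimes\sO_{L^k}(i)\to 0$ together with the Brenner--Kaid identification $A_{d+i}=\HH^1(K(i))$, and then separately restricts the defining sequence of $K$ to $\sO_{L^k}$; it compares the two resulting long exact sequences, with $\HH^0(K\otimes\sO_{L^k}(i))$ and $\HH^1(K\otimes\sO_{L^k}(i))$ serving as the common kernel and cokernel. Because $\HH^2(K(i-k))$ and $\HH^1(\sO_{L^k}(t))$ do not vanish on $\p^2$, the paper must split into the cases $n>2$ and $n=2$, checking that the extra term $\C^t$ appears at the tail of both sequences and cancels. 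Your snake-lemma argument on the map of restriction sequences for $\sO_{\p^n}^r$ and $\sO_{\p^n}(d)$ never forms $K\otimes\sO_{L^k}$ at all: it only needs $\HH^1(\sO_{\p^n}(t))=0$ to keep the rows exact on global sections, identifies $\ker\Phi_I=\HH^0(K(t))$ and $\cok\Phi_I=A_{t+d}$ directly from the syzygy sequence, and the hypothesis $\hh^0(K(i))=0$ kills the middle kernel so that the six-term sequence cleanly identifies kernels and cokernels of $\times L^k$ and $\HH^0(\Phi_{I,L^k})$. This buys you a uniform treatment of all $n\ge 2$ with no case distinction and no appeal to $\HH^1$ or $\HH^2$ of $K$; what the paper's version buys in exchange is the explicit cohomological description of the kernel and cokernel as $\HH^0(K\otimes\sO_{L^k}(i))$ and $\HH^1(K\otimes\sO_{L^k}(i))$, which it reuses in the geometric interpretation of Theorem \ref{th1bis}. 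Your closing remarks correctly isolate the two points requiring care (naturality of the identification $\cok\Phi_I=A_{t+d}$ with respect to $\times L^k$, and semicontinuity to pass between ``general $L$'' and ``every $L$''), neither of which is an obstacle.
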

\begin{rem*}
The theorem is not true if  $ \mathrm{h}^0( K(i))\neq 0$ i.e. if there  exists a syzygy of degree $i$  among $F_1,\ldots,F_r$. In \cite{MMO} the authors consider the injectivity of the map
$\mathrm{H}^0(\Phi_{I,L^k})$ for $i=0$ and for $r\le \mathrm{h}^0(\sO_{L}(d))$. In that case, since the forms $F_j$ are the generators of $I$, we have
of course  $ \mathrm{h}^0( K)=0$.
\end{rem*}
\begin{proof}
In  \cite[Proposition 2.1]{BK} the authors proved that $A_{d+i}=\mathrm{H}^1(K(i))$ for any $i\in \Z$.
Let us consider the canonical exact sequence
$$  \begin{CD}
 0@>>> K(i-k) @> \times L^k>> K(i) @>>>
 K\otimes \sO_{L^k}(i) @>>> 0.
\end{CD}$$
We obtain a long exact sequence of cohomology
$$
0 \rightarrow
  \mathrm{H}^0(K\otimes \sO_{L^k}(i)) \rightarrow A_{d+ i-k} \stackrel{\times L^k}\longrightarrow  A_{d+i} \rightarrow
\mathrm{H}^1(K\otimes \sO_{L^k}(i))\rightarrow \mathrm{H}^2(K(i-k)) \rightarrow 0.
$$
Let us assume first that  $n> 2$. Then  we have always
$\mathrm{h}^2(K(i-k))=0$ and it gives a shorter exact sequence:
$$
0 \longrightarrow
  \mathrm{H}^0(K\otimes \sO_{L^k}(i)) \longrightarrow A_{d+ i-k} \stackrel{\times L^k}\longrightarrow  A_{d+i} \longrightarrow
\mathrm{H}^1(K\otimes \sO_{L^k}(i))\longrightarrow  0.
$$
Moreover, since $n>2$, we have also $\mathrm{h}^1(\sO_{L^k}(i)) =0$. Then by tensoring the exact sequence defining the bundle $K$
by $\sO_{L^k}(i)$ and  taking the long cohomology exact sequence, we find:
$$ 0\longrightarrow
  \mathrm{H}^0(K\otimes   \sO_{L^k}(i)) \longrightarrow \mathrm{H}^0(  \sO_{L^k}(i))^r \stackrel{\mathrm{H}^0(\Phi_{I,L^k})}\longrightarrow
 \mathrm{H}^0(  \sO_{L^k}(i+d))\longrightarrow  \mathrm{H}^1(K\otimes   \sO_{L^k}(i))
\rightarrow 0.$$
Since the kernel and cokernel of both maps, $\mathrm{H}^0(\Phi_{I,L^k}) $ and $\times L^k$ are the same, the theorem is  proved for $n>2$.

If $n=2$, let us introduce the number $t=\mathrm{h}^2(K(i-k))$. This number is equal to $t=rr_{k-i-3}-r_{k-i-d-3}$ and we have a long exact sequence:
$$ 0\rightarrow
  \mathrm{H}^0(K\otimes   \sO_{L^k}(i)) \longrightarrow A_{d+i-k}\stackrel{\times L^k}\longrightarrow  A_{d+i}\longrightarrow
 \mathrm{H}^1( K\otimes \sO_{L^k}(i))\longrightarrow \C^t
\rightarrow 0.$$
Let us consider now the long exact sequence:
 $$\begin{CD} 0@>>>\mathrm{H}^0(K\otimes   \sO_{L^k}(i)) @>>>\mathrm{H}^0(  \sO_{L^k}(i))^r @>\mathrm{H}^0(\Phi_{I,L^k})>>
 \mathrm{H}^0( \sO_{L^k}(i+d)) @>>> \\
@>>>\mathrm{H}^1(K\otimes   \sO_{L^k}(i)) @>>>\mathrm{H}^1(  \sO_{L^k}(i))^r @>>>\mathrm{H}^1( \sO_{L^k}(i+d))@>>>0.
\end{CD} $$
Since $\mathrm{h}^1(\sO_{L^k}(i)) =\mathrm{h}^2(\sO_{\p^2}(i-k))=r_{k-i-3}$ (and  $\mathrm{h}^1(\sO_{L^k}(i+d)) =\mathrm{h}^2(\sO_{\p^2}(i+d-k))=r_{k-i-d-3}$), it remains a shorter exact sequence
 $$\begin{CD} 0@>>>\mathrm{H}^0(K\otimes   \sO_{L^k}(i)) @>>>\mathrm{H}^0(  \sO_{L^k}(i))^r @>\mathrm{H}^0(\Phi_{I,L^k})>>
 \mathrm{H}^0( \sO_{L^k}(i+d))   \\
@>>>\mathrm{H}^1(K\otimes\sO_{L^k}(i))@>>>\C^t@>>>0.
\end{CD} $$
As before, since the kernel and cokernel of both maps are the same, the theorem is  proved.
\end{proof}
Let us  introduce  the numbers
$N(r,i,k,d):=r(r_i-r_{i-k})- (r_{d+i}-r_{d+i-k})$,
$$N^{+}=\mathrm{sup}(0,N(r,i,k,d)) \,\, \mathrm{and}\,\, N^{-}=\mathrm{sup}(0,-N(r,i,k,d)). $$
The following corollary  is a didactic reformulation of the theorem above.
\begin{coro}
Assume that  there is no syzygy of degree $i$  among the $F_j$'s.  Then $I$ fails the SLP at the range $k$ in degree $d+i-k$
if and only if one of the two following equivalent conditions occurs:
\begin{itemize}
\item $\mathrm{h}^0(K\otimes   \sO_{L^k}(i))=\dim_{\mathbb C} (\mathrm{ker}(\mathrm{H}^0(\Phi_{I,L^k})))> N^{+}$,
\item $\dim_{\mathbb C} (\mathrm{coker}(\mathrm{H}^0(\Phi_{I,L^k})))> N^{-}$.
\end{itemize}
\end{coro}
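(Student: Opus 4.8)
The plan is to read the corollary off Theorem \ref{p1} by a bare dimension count together with the rank--nullity theorem, so that no further geometry is required. By Theorem \ref{p1}, under the hypothesis $\mathrm{h}^0(K(i))=0$ the ideal $I$ fails the SLP at range $k$ in degree $d+i-k$ if and only if the induced map $\mathrm{H}^0(\Phi_{I,L^k})\colon \mathrm{H}^0(\sO_{L^k}(i))^r\to \mathrm{H}^0(\sO_{L^k}(i+d))$ fails to have maximal rank. It therefore suffices to translate the phrase \emph{not of maximal rank} into the stated numerical inequalities on the kernel and the cokernel, and to check that those two inequalities coincide.

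First I would record the dimensions of the source and the target. Tensoring the canonical sequence $0\to \sO_{\p^n}(i-k)\xrightarrow{\times L^k}\sO_{\p^n}(i)\to \sO_{L^k}(i)\to 0$ and using $\mathrm{h}^1(\sO_{\p^n}(i-k))=0$ for $n\ge 2$ gives $\mathrm{h}^0(\sO_{L^k}(i))=r_i-r_{i-k}$, whence $\dim_{\C}\mathrm{H}^0(\sO_{L^k}(i))^r=r(r_i-r_{i-k})$; the same computation in degree $i+d$ gives $\dim_{\C}\mathrm{H}^0(\sO_{L^k}(i+d))=r_{d+i}-r_{d+i-k}$. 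Thus the difference between source and target dimensions is exactly $N(r,i,k,d)$. Moreover, the long exact sequence produced in the proof of Theorem \ref{p1} by tensoring the defining sequence of $K$ with $\sO_{L^k}(i)$ already identifies $\ker \mathrm{H}^0(\Phi_{I,L^k})$ with $\mathrm{H}^0(K\otimes\sO_{L^k}(i))$, which is precisely the first equality in the first bullet.

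Now I would apply rank--nullity. Write $V=\mathrm{H}^0(\sO_{L^k}(i))^r$, $W=\mathrm{H}^0(\sO_{L^k}(i+d))$ and let $\rho$ be the rank of $\mathrm{H}^0(\Phi_{I,L^k})$ and $\delta=\min(\dim_{\C}V,\dim_{\C}W)-\rho\ge 0$ its rank deficiency. Then $\dim_{\C}\ker=\max(0,\dim_{\C}V-\dim_{\C}W)+\delta=N^{+}+\delta$ and $\dim_{\C}\mathrm{coker}=\max(0,\dim_{\C}W-\dim_{\C}V)+\delta=N^{-}+\delta$, using $N^{+}=\sup(0,N(r,i,k,d))$ and $N^{-}=\sup(0,-N(r,i,k,d))$. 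Failure of maximal rank is exactly $\delta>0$, hence equivalent to $\dim_{\C}\ker>N^{+}$ and equally to $\dim_{\C}\mathrm{coker}>N^{-}$; the equivalence of the two bullets is thereby immediate, since both say $\delta>0$, and it is also reflected in the identity $N^{+}-N^{-}=N(r,i,k,d)=\dim_{\C}\ker-\dim_{\C}\mathrm{coker}$.

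The only point requiring any care — the modest main obstacle — is the two-sign bookkeeping hidden in the formulas $\dim_{\C}\ker=N^{+}+\delta$ and $\dim_{\C}\mathrm{coker}=N^{-}+\delta$: one must verify separately in the regimes $N(r,i,k,d)\ge 0$ (where $V$ dominates, so maximal rank means surjectivity and $N^{-}=0$) and $N(r,i,k,d)<0$ (where $W$ dominates, so maximal rank means injectivity and $N^{+}=0$) that $\min(\dim_{\C}V,\dim_{\C}W)$ indeed leaves residues $N^{+}$ on the kernel side and $N^{-}$ on the cokernel side. Once this elementary case check is in place, the corollary is a direct restatement of Theorem \ref{p1}.
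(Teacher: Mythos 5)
Your proof is correct and matches the paper's intent: the paper offers no separate argument for this corollary, calling it a ``didactic reformulation'' of Theorem \ref{p1}, and your derivation --- identifying $\ker \mathrm{H}^0(\Phi_{I,L^k})$ with $\mathrm{H}^0(K\otimes\sO_{L^k}(i))$ via the long exact sequence from the theorem's proof, computing $\dim V-\dim W=N(r,i,k,d)$, and applying rank--nullity to see that failure of maximal rank is exactly $\delta>0$ on both the kernel and cokernel sides --- is precisely the dimension count the authors leave implicit. No gaps; the sign bookkeeping you flag is handled correctly.
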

In the next section we  translate this corollary in  geometric terms.
\section{Syzygy bundle and Veronese variety}
\label{s3}
We recall that the $s$-th osculating space $T_P^{s}(X)$ to a $n$-dimensional complex projective  variety $X\subset \p^N$ at $P$ is the subspace of $ \p^N$ spanned
by $P$ and by all the derivative points of degree less  than or equal to $s$ of a local parametrization of $X$, evaluated at $P$. Of course, for $s=1$ we get the tangent space $T_P(X)$.
A $n$-dimensional variety $X\subset \p^N$ whose $s$-th osculating space at a general point has dimension $\mathrm{inf}(\binom{n+s}{n}-1,N)-\delta$ is said to satisfy $\delta$ independent Laplace equations of order $s$. We will say, for shortness, that the \textit{number} of Laplace equations is $\delta$.

\begin{rem*}
If $N < \binom{n+s}{n}-1$, then there are always $\binom{n+s}{n}-1-N$ linear relations between the partial derivatives. These relations are \lq\lq trivial\rq\rq\ Laplace equations of order s. We will not consider them in the following, so when we write \lq\lq there is a Laplace equation of order $s$\rq\rq\ we understand \lq\lq a non-trivial Laplace equation of order $s$\rq\rq.
\end{rem*}
Let us briefly explain now the link with projections of $v_t(\p^n)$.

Let $R_1$ be a complex vector space of linear forms of  dimension $n+1$ such that $\mathrm{H}^0\sO_{\p^n}(1)=R_1$.
We consider the Veronese embedding:
$$
\begin{array}{llll}
v_{t} : & \p (R_1^*)&\hookrightarrow &\p (R_{t}^*)\\
 &  [L]&  \mapsto &  [L^{t}].
\end{array}
$$
The image  $v_t(\p^n)$ is called the Veronese $n$-fold of order $t$.
At the point $[L^t]\in v_t(\p^n)$, the $s$-th osculating space, $1\le s\le t-1$,
is the space of degree $d$ forms  possessing a  factorization
$L^{t-s}G$ where $G$ is a form of degree $s$ \cite[Theorem 1.3]{I2}.  It is identified with
$\p (R_s^*)$.

Let us think about the projective duality in terms of derivations  (it is in fact the so-called apolarity, see \cite{D}). A canonical basis of $R_{d}^*$ is given by the $r_d$ derivations:
$$ \frac{\partial^d}{\partial x_0^{i_{0}}\ldots \partial x_n^{i_n}} \,\, \mathrm{with}\,\, i_0+\ldots +i_n=d.$$

Let $I=(F_1, \ldots, F_r)\subset R$ an ideal generated by $r$ forms of degree $d$. Note that  $F_1,\ldots,F_r$ are points in $\p(R_{d}^*)$.
 We denote by $I_d$ the vector subspace of $R_d$ generated by the $F_1,\ldots,F_r$ and by $I_{d+i}=R_iF_1+\cdots+R_iF_r$, for any $i\geq 0$.
Let us introduce the orthogonal vector space to $I_{d+i}$
$$ I_{d+i}^{\perp}=\{\delta \in R_{d+i}^*\,|\,\delta(F)=0, \,\, \forall F\in I_{d+i}\}.$$
It gives an exact sequence of vector spaces
$$  \begin{CD}
0 @>>> I_{d+i}^{\perp} @>>>R_{d+i}^* @>>> I_{d+i}^* @>>> 0
\end{CD}$$
and  the corresponding  projection map
$$\begin{CD}
\pi_{I_{d+i}}: \p(R^*_{d+i})/\p(I^*_{d+i}) \rightarrow \p(I^\perp _{d+i})
  \end{CD}
$$
Of course one can identify $R_{d+i}/I_{d+i}\simeq (I_{d+i}^{\perp})^*$ and write the decomposition
$R_{d+i}=I_{d+i} \oplus (I_{d+i}^{\perp})^*.$

\begin{rem*}
In the following two  situations, the vector
space   $(I_d^{\perp})^*$ is easy to describe:
\begin{enumerate}
 \item When $I_d$ is generated by $r$ monomials of degree $d$,   $(I_{d}^{\perp})^*$ is  generated by
the  remaining $r_d-r$ monomials.
\item When $I_d=(L_1^d,\ldots,L_r^d)$ where  $[L_i]\in \p(R_{1}^*)$,
$(I_{d}^{\perp})^*$ is generated by degree $d$ polynomials that vanish at the points
$[L_i^{\vee}]\in \p(R_{1}).$
\end{enumerate}
\end{rem*}
It is well known that the tangent spaces to the Veronese varieties can be interpreted as singular hypersurfaces.
More precisely a hyperplane  containing the tangent space $T_{[L^t]}v_t(\p^n)$ corresponds in the dual space $\p^{n\vee}$ to  a hypersurface
of degree $t$ that is singular at the point $[L^{\vee}]$. More generally
a hyperplane  containing the  $s$-th ($s\le 1$) osculating space $T_{[L^t]}^{s}v_t(\p^n)$ corresponds to a hypersurface of degree $t$ and multiplicity $(s+1)$ at the point $[L^{\vee}]$ (see for instance \cite{BCGI}).

\smallskip

Thus the dual variety of $v_t(\p^n)$ is the discriminant variety that parametrizes the  singular  hypersurfaces of degree $t$ when
 the $s$-th osculating variety of $v_t(\p^n)$  parametrizes the hypersurfaces of degree $t$ with a point of multiplicity $s+1$.

We propose now an extended version of the ``main'' theorem of \cite{MMO} (to be precise Theorem 3.2).
\begin{thm}
\label{th1bis}
Let $I=(F_1, \ldots, F_r) \subset R$ be an  artinian ideal generated by $r$ homogeneous polynomials  of degree $d$.
Let $i,k,\delta$ be integers such that $
i\ge 0$, $k\ge 1$.
Assume that there is no syzygy  of degree $i$  among the $F_j$'s.
The following conditions are equivalent:
\begin{enumerate}
\item The ideal $I$ fails the SLP at the range  $k$ in degree $d+ i-k$.
\item There exist  $N^{+}+\delta$, with $\delta \ge 1$,  independent vectors  $(G_{1j},\ldots, G_{rj})_{j=1, \ldots,N^{+}+\delta} \in R_i^{\oplus r}$ and
$N^{+}+\delta $ forms $G_j\in R_{d+ i-k}$ such that
$G_{1j}F_1+ \ldots + G_{rj}F_r=L^kG_j$ for a general linear form $L$ of $\p^n$.
\item
 The $n$-dimensional variety $\pi_{I_{d+i}}(v_{d+i}(\p^n))$ satisfies
$\delta \ge 1$ Laplace equations of order $d+i-k$.
\item \label{item_iv_thm} For any  $L\in R_1$, $\mathrm{dim}_{\C}((I_{d+i}^{\perp})^*\cap \mathrm{H}^0(\cI_{L^{\vee}}^{d+i-k+1}(d+i))\ge N^{-}+\delta$, with $\delta \ge  1$.
\end{enumerate}
\end{thm}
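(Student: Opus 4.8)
The plan is to derive the four-way equivalence from the cohomological dictionary of Theorem \ref{p1}, reading the kernel and cokernel of $\HH^0(\Phi_{I,L^k})$ off the exact sequences in its proof and then translating each into the language of Section \ref{s3}. Write $\kappa=\dim_\C\ker\HH^0(\Phi_{I,L^k})$ and $\gamma=\dim_\C\mathrm{coker}\,\HH^0(\Phi_{I,L^k})$ for general $L$; the sequences in the proof of Theorem \ref{p1} identify these with the kernel and cokernel dimensions of $\times L^k\colon A_{d+i-k}\to A_{d+i}$, and give $\kappa=\hh^0(K\otimes\sO_{L^k}(i))$. Taking the alternating sum of dimensions along the same sequences yields $\kappa-\gamma=N(r,i,k,d)$ (this survives the $\C^t$ term in the $n=2$ case), hence $\kappa-N^+=\gamma-N^-=:\delta$. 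By the corollary following Theorem \ref{p1}, condition (1) is exactly the assertion $\delta\ge 1$, so it remains to recognise this single integer $\delta$ inside (2), (3) and (4).

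For (1)$\Leftrightarrow$(2) I would unwind the kernel. Since $\hh^0(K(i))=0$, the section sequence gives $\HH^0(\sO_{L^k}(i))=R_i/L^kR_{i-k}$, so an element of the kernel lifts to a tuple $(G_{1},\dots,G_{r})\in R_i^{\oplus r}$ with $\sum_l G_lF_l\equiv 0$ modulo $L^k$, i.e. $\sum_l G_lF_l=L^kG$ for some $G\in R_{d+i-k}$. A basis of the kernel thus yields precisely $\kappa=N^++\delta$ independent relations of this shape, and conversely such relations span the kernel; so (1), namely $\delta\ge 1$, is equivalent to the existence of the $N^++\delta$ vectors described in (2).

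For (1)$\Leftrightarrow$(4) I would compute the cokernel and dualise by apolarity. The image of $\HH^0(\Phi_{I,L^k})$ is $(I_{d+i}+L^kR_{d+i-k})/L^kR_{d+i-k}$, whence $\mathrm{coker}\,\HH^0(\Phi_{I,L^k})=R_{d+i}/(I_{d+i}+L^kR_{d+i-k})$, an identity valid for every $n$, and its dual is $I_{d+i}^\perp\cap(L^kR_{d+i-k})^\perp$ inside $R_{d+i}^*$. The crucial apolarity input is that the transpose of multiplication by $L^k$ under the apolar pairing is the $k$-th order directional derivative $\partial_{L^\vee}^k$ on $R_{d+i}^*$, so that $(L^kR_{d+i-k})^\perp=\ker\partial_{L^\vee}^k$, which is exactly the linear system of degree $d+i$ forms of multiplicity $\ge d+i-k+1$ at $[L^\vee]$, i.e. $\HH^0(\cI_{L^\vee}^{\,d+i-k+1}(d+i))$. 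Under the identification $(I_{d+i}^\perp)^*\simeq R_{d+i}/I_{d+i}$ of Section \ref{s3} this gives $\dim_\C\big((I_{d+i}^\perp)^*\cap \HH^0(\cI_{L^\vee}^{\,d+i-k+1}(d+i))\big)=\gamma=N^-+\delta$ for general $L$; since this dimension is upper semicontinuous in $L$ and attains its minimum generically, the bound $\ge N^-+\delta$ then holds for every $L\in R_1$, which is (4).

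Finally, for (1)$\Leftrightarrow$(3) I would use that a general linear projection carries osculating spaces to osculating spaces, so that $T^{d+i-k}_PX$ for $X=\pi_{I_{d+i}}(v_{d+i}(\p^n))$ is the $\pi_{I_{d+i}}$-image of $T^{d+i-k}_{[L^{d+i}]}v_{d+i}(\p^n)$, whose linear span is $\langle L^kR_{d+i-k}\rangle$ of the full dimension $r_{d+i-k}$. The number of independent linear relations the projected osculating space acquires, beyond the trivial ones forced by the dimension of the ambient space $\p(I_{d+i}^\perp)$, is controlled by the overlap of that span with the centre of projection and is measured by $\mathrm{coker}\,\HH^0(\Phi_{I,L^k})$; a dimension count identifies this number of non-trivial Laplace equations of order $d+i-k$ with $\delta$. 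Hence $X$ satisfies $\delta\ge 1$ Laplace equations exactly when $I$ fails the SLP, closing the cycle. I expect the main difficulty to lie in the apolarity bookkeeping of (4) --- verifying that the transpose of $\times L^k$ is $\partial_{L^\vee}^k$ and that its kernel is the stated multiplicity system --- and, in (3), in the projection formula for osculating spaces at a general point together with the correct separation of trivial from genuine Laplace equations, which is precisely the role played by $N^+$ and $N^-$.
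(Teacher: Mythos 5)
Your proof is correct and follows essentially the same route as the paper: both reduce everything to the kernel and cokernel of $\mathrm{H}^0(\Phi_{I,L^k})$ via Theorem \ref{p1}, identify the common excess $\delta=\kappa-N^+=\gamma-N^-$, read (2) off the kernel, (4) off the cokernel by apolarity (the paper phrases this as hyperplanes containing the osculating space, which is your dualization in geometric language), and (3) via the projection of the osculating space of the Veronese. Your explicit semicontinuity remark upgrading ``general $L$'' to ``any $L$'' in (4) is a small point the paper leaves implicit, but it does not change the argument.
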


\begin{proof}
The equivalence  $(1)\Leftrightarrow (2)$ is proved  in Theorem \ref{p1}.

Since  $I$ is generated in degree $d$, the map $R_i\times I_d \rightarrow I_{d+i}$ is surjective and the relation $G_1F_1+ \ldots + G_rF_r=L^kG$
is equivalent to  $\p(I_{d+i}^*) \cap T_{[L^{d+i}]}^{d+i-k}v_{d+i}(\p^n)\neq \emptyset$. More generally the
number of independent relations $G_{1j}F_1+ \ldots + G_{rj}F_r=L^kG_j$ is the dimension of the kernel
of the map $\mathrm{H}^0(\Phi_{I,L^k})$ i.e. the dimension of $\mathrm{H}^0(K\otimes \sO_{L^k}(i))$; this number of independent relations, written
in a geometric way, is
$$N^++\delta=\mathrm{dim}[\p(I_{d+i}^*) \cap T_{[L^{d+i}]}^{d+i-k}v_{d+i}(\p^n)]+1,\,\,\, (\delta \ge 0)$$
where the projective dimension is $-1$ if the intersection is empty.  The number \textcolor[rgb]{0.98,0.00,0.00}{$\delta$} is the number of (non trivial) Laplace equations. Indeed,
 the dimension of the $(d+i-k)$-th osculating space to $\pi_{I_{d+i}}(v_{d+i}(\p^n))$ is $r_{d+i-k} -N^{+} -\delta$   since
 the $(d+i-k)$-th osculating space to $v_{d+i}(\p^n)$  meets the center of projection along a $\p^{N^{+}+\delta-1}$.  In other words, the $n$-dimensional variety  $\pi_{I_{d+i}}(v_{d+i}(\p^n))$
 satisfies $\delta$ Laplace equations and  $(3)$ is equivalent to $(2)$.

The image  by $\pi_{I_{d+i}}$ of the  $(d+i-k)$-th osculating space to the Veronese
$v_{d+i}(\p^n)$ in a general point  has codimension $\mathrm{h}^0(K\otimes \sO_{L^k}(i))-N^+$ in
$\p(I_{d+i}^{\perp})$. The codimension corresponds to the number of hyperplanes in $\p(I_{d+i}^{\perp})$ containing the osculating space to
$\pi_{I_{d+i}}(v_{d+i}(\p^n))$. These hyperplanes are images by $\pi_{I_{d+i}}$ of hyperplanes in $\p(R_{d+i}^*)$ containing
$\p(I_{d+i}^*)$ and the $(d+i-k)$-th osculating plane to
$v_{d+i}(\p^n)$ at the point $[L^{d+i}]$.
In the dual setting it means that these hyperplanes are
forms  of degree $d+i$ in $(I_{d+i}^{\perp})^*$ with multiplicity  $(d+i-k+1)$ at $[L^{\vee}]$. It proves that
$(3)$ is  equivalent to $(4)$.

To summarize, the number of Laplace equations is $\hh^0(K\otimes \sO_{L^k})-N^+$ and
$\mathrm{coker}(\mathrm{H}^0(\Phi_{I,L^k}))\simeq (I_{d+i}^{\perp})^*\cap \mathrm{H}^0(\cI_{L^{\vee}}^{d+i-k+1}(d+i)).$
\end{proof}
\begin{rems*}
\begin{enumerate}[1.]
\item \label{cor_cones} Let us explain the geometric meaning of Theorem \ref{th1bis} \ref{item_iv_thm} in a simple case: if $N^-=0$, then \ref{item_iv_thm} means that $I$ fails the SLP at the range $k$ in degree $d+i-k$ if and
only if there exists at any point $M\in\p^n$ a hypersurface of degree $d+i$ with multiplicity $d+i-k+
1$ at $M$ given by a form in
$(I_{d+i}^{\perp})^*\simeq R_{d+i}/I_{d+i}$.
\item Let $
I=(L_1^{d},\ldots,L_r^{d})$ where $L_1, \ldots, L_r$ are general linear forms.
The vector space $(I_{d+i}^{\perp})^*$, where
$I_{d+i}=L_1^{d}R_i+\ldots+L_r^{d}R_i,$ is the vector space of the forms of degree $d+i$
vanishing in $r$ points $[L_j^{\vee}]$ with multiplicity $(i+1)$.
In other words $f\in \cap_{j=1}^r \mathrm{H}^0(\cI_{L_j^{\vee}}^{i+1}(d+i)$) (see
\cite[Corollary 3]{EI}). Geometrically it can be described
as $\p(I_{d+i}^*)=\mathrm{Join}(T_{[L_1^{d+i}]}^{i}v_{d+i}(\p^n), \cdots ,
T_{[L_r^{d+i}]}^{i}v_{d+i}(\p^n)).$
\item By the theorem above, when $
N(r,i,k,d)\ge 0$, the ideal $
I=(L_1^{d},\ldots,L_r^{d})$ fails
the SLP at the range $k$ in degree $d-k+i$ if and only if
the following intersection is not empty:
$$ \mathrm{Join}(T^i_{[L_1^{d+i}]} v_{d+i}(\p^n), \cdots , T^i_{[L_r^{d+i}]}v_{d+i}(\p^n))
\cap \, T^{d+i-k}_{[L^{d+i}]}v_{d+i}(\p^n).$$
\item
Here we focus the attention also on the number $\delta$ of Laplace equations satisfied by
$\pi_{I_{d+i}}(v_{d+i}(\mathbb P^n))$. The geometric meaning of this number was highlighted by
Terracini \cite{Te}
for Laplace equations of order $2$ and recently for any order by \cite{DDI}, where a classification
of varieties satisfying \lq\lq many\rq\rq\ Laplace equations is given.
\end{enumerate}
\end{rems*}
The characterization of the failure of the SLP by the existence of ad-hoc singular hypersurfaces allows us to answer, in the three following propositions,  some questions posed by Migliore and Nagel.
Let us recall their questions:
 \begin{problem*}
 \cite[Problem 5.4]{MN}
Let $I = (x_1^N,x_2^N,x_3^N,x_4^N,L^N)$ for a general
linear form $L$. $R/I$ fails the WLP, for $N = 3, . . . , 12$.
There are some natural questions arising from this example:
\begin{enumerate}
  \item \label{problem1} Prove the failure of the WLP in previous example for all $N \geq 3$.
  \item What happens for mixed powers?
  \item \label{problem3} What happens for almost complete intersections, that is, for $r+1$ powers of general
linear forms in $r$ variables when $r \ge 4$?
\end{enumerate}
  \end{problem*}
  \begin{conj*}\cite[Conjecture 5.13]{MN}
   Let $L_1, \ldots , L_{2n+2}$ be general linear
forms and $I = (L_1^d, \ldots , L_{2n+2}^d)$
\begin{enumerate}
  \item  \label{conj1} If $n = 3$ and$ d = 3$ then $R/I$ fails the WLP.
\item If $n \geq 4$ then $R/I$ fails the WLP if and only if $d > 1$.
\end{enumerate}
  \end{conj*}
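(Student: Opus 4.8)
The plan is to run the geometric characterization of Theorem~\ref{th1bis} at range $k=1$, since the WLP is the SLP at range $1$, against the configuration of $r=2n+2$ general points $P_j=[L_j^{\vee}]\subset\p^n$ dual to the $L_j$. The ``only if'' part of statement~(2) is immediate: for $d=1$ the $2n+2>n+1$ general linear forms already span $R_1$, so $I\supseteq(x_0,\dots,x_n)$, $R/I=\C$, and the WLP holds trivially; hence any failure forces $d>1$. It therefore remains to produce, for each $d\ge 2$ (and in the boundary case $n=3$, $d=3$ of~(1)), an integer $i\ge 0$ with $\hh^0(K(i))=0$ at which the map $\times L\colon A_{d+i-1}\to A_{d+i}$ has non-maximal rank for general $L$.

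By Theorem~\ref{p1} and its corollary this is equivalent, at such an $i$, to the restricted map $\HH^0(\Phi_{I,L})\colon \HH^0(\sO_{L}(i))^{r}\to\HH^0(\sO_{L}(i+d))$ dropping rank, which condition~\ref{item_iv_thm} of Theorem~\ref{th1bis} reads off dually: the WLP fails iff for the general vertex $M=[L^{\vee}]$ the space $(I_{d+i}^{\perp})^{*}\cap\HH^0(\cI_{M}^{d+i}(d+i))$ has dimension exceeding $N^{-}$. By the Emsalem--Iarrobino description recalled after Theorem~\ref{th1bis}, $(I_{d+i}^{\perp})^{*}$ is the space of degree $d+i$ forms vanishing to order $i+1$ at the $P_j$, while $\HH^0(\cI_{M}^{d+i}(d+i))$ is the space of cones of vertex $M$. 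Projecting such a cone from $M$ collapses the whole intersection onto the linear system of hypersurfaces of degree $d+i$ in $\p^{n-1}$ through the $r$ projected points with multiplicity $i+1$. Thus the entire question becomes one of postulation: the WLP fails at range $1$ and degree $d+i-1$ precisely when this system of $2n+2$ general fat points of multiplicity $i+1$ in $\p^{n-1}$ is superabundant beyond~$N^{-}$ (equivalently, by the kernel side, when $\pi_{I_{d+i}}(v_{d+i}(\p^n))$ carries a non-trivial Laplace equation).

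I would select $i$ at the critical value where $N(r,i,1,d)=r(r_i-r_{i-1})-(r_{d+i}-r_{d+i-1})$ changes sign, because there $\HH^0(\Phi_{I,L})$ is (nearly) square and a single unexpected section forces the rank drop. For part~(1), with $n=3$, $d=3$, $r=8$, the Hilbert function of $R/I$ is $1,4,10,12,3$, and the relevant transitions sit at $i=0$ (the map $A_2\to A_3$) and $i=1$ (the map $A_3\to A_4$); I would pin the failure to the projected octuple of fat points in $\p^{2}$. For part~(2) I would treat all $d\ge 2$ at once, reducing uniformly to fat points in $\p^{n-1}$ and exploiting the special incidence geometry of $2n+2=2(n+1)$ points --- which split into two complementary simplices --- to manufacture the required cone.

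The hard part will be exactly this last step: establishing the superabundance of the projected system of $2n+2$ general fat points of multiplicity $i+1$ in $\p^{n-1}$ at the critical degree. Because $2n+2$ general points are otherwise in very general position, the excess cannot be produced by a transverse dimension count and must instead be forced by the cone constraint at $M$, i.e. by an Alexander--Hirschowitz--type exceptional behaviour of the associated fat-point scheme. I expect to control it by degenerating the $2n+2$ points to a configuration supported on a rational normal curve, or on a pair of complementary simplices, where the cohomology of the ideal sheaf can be computed explicitly; a semicontinuity argument would then propagate the rank drop back to the general configuration, and a direct check would confirm the numerics in the boundary case $n=3$, $d=3$.
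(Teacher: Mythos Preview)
There is a genuine error in your setup. In Conjecture~5.13 of \cite{MN} the ambient ring is $\C[x_1,\dots,x_{2n+1}]$, not the paper's $R=\C[x_0,\dots,x_n]$: the $2n+2$ linear forms live on $\p^{2n}$, not on $\p^n$. (This is precisely the ``almost complete intersection'' situation of Problem~5.4\,(3): one more generator than variables.) The paper's own Proposition~\ref{pr54-3}, which settles part~(1), accordingly works on $\p^{6}$ with eight cubics. Your Hilbert function $1,4,10,12,3$ and your plan to ``pin the failure to the projected octuple of fat points in $\p^{2}$'' are computations on $\p^{3}$ and do not address the statement as intended.

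Second, the paper proves only part~(1); part~(2) is not established here. Your outline for~(2) is explicitly programmatic (``I would select~$i$ at the critical value\dots'', ``I expect to control it by degenerating\dots'') and, even setting aside the ambient-space issue, does not amount to a proof. Degenerating the $2n+2$ points to special position and invoking semicontinuity would at best show that the rank drops for \emph{special} $L_j$, which is the wrong direction for concluding failure of the WLP for \emph{general} $L_j$.

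For part~(1), once the ambient space is corrected to $\p^{6}$, the paper's argument is shorter and more concrete than the Alexander--Hirschowitz/semicontinuity route you propose. One takes $i=1$ (so $d+i-k=3$) and must show that the cokernel of $\HH^0(\sO_L(1))^{8}\to\HH^0(\sO_L(4))$ exceeds $N^{-}=126-48=78$. By Theorem~\ref{th1bis}\,\ref{item_iv_thm} this cokernel is identified with the quartics in $\p^{6}$ having a quadruple point at $M=[L^{\vee}]$ and double points at the eight $[L_j^{\vee}]$. Projecting from $M$ to $\p^{5}$, the space $V$ of quadrics through the eight image points has dimension $21-8=13$; the products $Q_iQ_j$ span a $\binom{14}{2}=91$-dimensional space of quartics singular at all eight points, and the cones over them give $91>78$ independent elements of the cokernel. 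No degeneration or postulation analysis is needed---just this dimension count on $\mathrm{Sym}^{2}V$.
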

We prove \ref{problem1} of  \cite[Problem 5.4]{MN} in Proposition \ref{pr54-1}, \ref{problem3} of \cite[Problem 5.4]{MN}, for $r=4$ and $N=4$,  in Proposition \ref{pr54-2} and \ref{conj1} of \cite[Conjecture 5.13]{MN} in Proposition \ref{pr54-3}.

Since all these results concern powers of linear forms, let us first verify that the hypothesis on the global syzygy in Theorem \ref{th1bis} is not restrictive.
%
\begin{lem}
 \label{lem-syz}
Let $I$ be the ideal $(L_1^{d},\ldots,L_r^{d})$  where the $L_j$ are linear forms and $r<r_d$. Let $K$ be its syzygy bundle. Then
$$ \hh^0(K(i))=0 \Leftrightarrow rr_i\le r_{d+i}.$$
\end{lem}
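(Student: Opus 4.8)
The plan is to turn the vanishing $\hh^0(K(i))=0$ into a statement about $\dim_{\C}I_{d+i}$, and thence into the postulation of fat points. Twisting the defining sequence of $K$ by $\sO_{\p^n}(i)$ and passing to global sections yields the left exact sequence
$$0 \longrightarrow \HH^0(K(i)) \longrightarrow \HH^0(\sO_{\p^n}(i))^r \xrightarrow{\ \HH^0(\Phi_I)\ } \HH^0(\sO_{\p^n}(d+i)),$$
in which the middle map sends $(G_1,\dots,G_r)$ to $G_1L_1^d+\dots+G_rL_r^d$ and hence has image exactly $I_{d+i}=L_1^dR_i+\dots+L_r^dR_i$. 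Therefore
$$\hh^0(K(i)) = r\,r_i - \dim_{\C} I_{d+i},$$
and the assertion becomes: $\dim_{\C} I_{d+i}=r\,r_i$ if and only if $r\,r_i\le r_{d+i}$. The implication $(\Rightarrow)$ is then immediate and uses nothing about the position of the $L_j$: if $\hh^0(K(i))=0$ then $\dim_{\C}I_{d+i}=r\,r_i$, and since $I_{d+i}\subseteq R_{d+i}$ this forces $r\,r_i\le r_{d+i}$.

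For $(\Leftarrow)$ I would show that $\HH^0(\Phi_I)$ has maximal rank when the $L_j$ are general, so that $\dim_{\C}I_{d+i}=\min(r\,r_i,\,r_{d+i})$, which equals $r\,r_i$ exactly under the hypothesis $r\,r_i\le r_{d+i}$. Since every entry of a matrix for $\HH^0(\Phi_I)$ is polynomial in the coefficients of the $L_j$, its rank is lower semicontinuous in the configuration, so it suffices to exhibit a \emph{single} choice of linear forms for which the rank is $r\,r_i$; the general configuration then inherits that rank. To pin down what must be checked I would dualize by apolarity (the description recalled above, see \cite{EI}): $(I_{d+i}^{\perp})^{*}=\bigcap_{j=1}^{r}\HH^0(\cI_{[L_j^\vee]}^{i+1}(d+i))$ is the space of degree $d+i$ forms singular of order $i+1$ at the points $[L_j^\vee]$. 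Thus injectivity of $\HH^0(\Phi_I)$ is equivalent to the fat point scheme $Z=\sum_{j}(i+1)\,[L_j^\vee]$ imposing independent conditions on degree $d+i$ forms, i.e. $\hh^0(\cI_Z(d+i))=r_{d+i}-r\,r_i$.

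The only real difficulty, and the single place where genericity of the $L_j$ is essential — note that for special forms, e.g. two coincident ones, injectivity already fails — is precisely this last postulation statement. I would attack it by a Horace-type specialization, degenerating the $[L_j^\vee]$ onto a hyperplane and inducting on $n$ and on the degree $d+i$ so as to reduce the count of conditions to cases that are already settled; alternatively the non-speciality of these uniform fat point systems can be read off from the inverse-system computations of \cite{EI}. I expect this fat point postulation to be the main obstacle, the rest of the argument being formal bookkeeping around the exact sequence and semicontinuity.
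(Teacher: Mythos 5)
Your forward implication and your translation of the problem are correct: from the twisted sequence one gets $\hh^0(K(i)) = r\,r_i - \dim_{\C} I_{d+i}$, so the vanishing forces $r\,r_i\le r_{d+i}$, and by the Emsalem--Iarrobino duality (which the paper itself uses via \cite[Corollary 3]{EI}) the converse is equivalent to the scheme $Z$ of $r$ general fat points of multiplicity $i+1$ imposing independent conditions on forms of degree $d+i$. The gap is exactly the step you flag as the main obstacle: you do not prove that postulation statement, and in fact it is \emph{false} in the stated generality, so no Horace specialization or semicontinuity argument can deliver it. Take $n=2$, $d=3$, $i=1$, $r=5$, so that $r\,r_i=15=r_{d+i}$ and $r<r_d=10$: five general double points of $\p^2$ do not impose independent conditions on quartics, since the square of the unique conic through them lies in $\HH^0(\cI_Z(4))$ --- this is one of the exceptional cases of the Alexander--Hirschowitz theorem. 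Translated back through your own dictionary, five general cubes $L_1^3,\ldots,L_5^3$ in $\C[x,y,z]$ admit a linear syzygy, so $\hh^0(K(1))=1$ although $r\,r_1\le r_4$. Any completion of your plan must therefore either strengthen the numerical hypothesis or exclude a list of exceptional quadruples $(n,d,i,r)$; for multiplicities $i+1\ge 3$ that list is not known (it is the territory of the Segre--Harbourne--Gimigliano--Hirschowitz conjectures), so the alternative of ``reading it off from \cite{EI}'' is not available either.

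For comparison, the paper's proof of the nontrivial direction is quite different and does not pass through fat points: it reads a degree-$i$ syzygy as a deficiency of $\mathrm{Join}(T^i_{[L_1^{d+i}]}v_{d+i}(\p^n),\ldots,T^i_{[L_r^{d+i}]}v_{d+i}(\p^n))$ and derives a contradiction with the dimension count for osculating varieties of Veronese varieties in \cite[Lemma 3.3]{BCGI}. But your reduction shows the two routes attack equivalent statements, and the example above tests the boundary case $r\,r_i=r_{d+i}$ of the lemma itself, whatever the method: the tangent variety of $v_4(\p^2)$ has the expected dimension $4$, yet by Terracini and the classical defectivity of $\sigma_5(v_4(\p^2))$ the join of five general tangent planes is a $\p^{13}$ inside $\p^{14}$ rather than the expected $\p^{14}$, so deficiency of the join of osculating spaces does not follow from non-deficiency of the osculating variety. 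The honest conclusion is that your write-up is not a proof, but the obstruction you identified is genuine: the clean numerical criterion cannot be established by formal bookkeeping plus a standard induction, and it should only be relied upon in parameter ranges where the relevant postulation (or the absence of exceptional systems) can be checked directly, as in the specific applications the paper makes of the lemma.
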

\begin{proof} One direction is obvious. Let us assume that $rr_i\le r_{d+i}$ and that there exists a relation
$$G_{1}L_1^{d}+ \ldots + G_{r}L_r^{d}=0,  $$
with $G_{1}, \ldots, G_{r}$ forms of $R_i$.  Both hypotheses imply that the projective space
$\mathrm{Join}(T_{[L_1^{d+i}]}^{i}v_{d+i}(\p^n), \cdots , T_{[L_r^{d+i}]}^{i}v_{d+i}(\p^n))$ has dimension strictly less than the expected one.
Since the linear forms are general, it implies that the algebraic closure of $\cup_{L\in R_1^{d+i}}T_{[L^{d+i}]}^{i}v_{d+i}(\p^n)$ has not the expected dimension.
It contradicts the lemma 3.3 in  \cite{BCGI}.
\end{proof}
Proposition \ref{pr54-2} is already proved in \cite[Lemma 4.8]{HSS} and also in
\cite[Theorem 4.2 (ii)]{MMN}. We propose here a new proof based  on the existence of a
singular hypersurface characterizing the failure of the SLP. Let us mention that,
on $\p^2$ a hypersurface of degree $d+i$ with a point of multiplicity $d+i$ is simply an union of
lines (as, for instance, in Theorem \ref{th3} and Proposition \ref{th4}), but on $\p^n$, with $n>2$, a hypersurface
of degree $d+i$ with a point of multiplicity $d+i$ is more generally a cone over a hypersurface in
the hyperplane at infinity. This is the key argument in the proofs of the three following propositions.
\begin{prop}
\label{pr54-1}
 Let $N$ be an integer such that $N\ge 3$.
Then the ideal $(x_0^N,x_1^N,x_2^N,x_3^N, (x_0+x_1+x_2+x_3)^N)$  fails the WLP in degree $2N-3$.
\end{prop}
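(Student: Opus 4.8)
The plan is to apply Theorem~\ref{th1bis} with the parameters $n=3$, $d=N$, $r=5$, $k=1$, and $i=N-2$, so that the target degree is $d+i-k=2N-3$ and the WLP (that is, the SLP at range $1$) in this degree is controlled by the five points $P_0,\dots,P_3,P_4$ of $\p^3$ dual to the generators $x_0,\dots,x_3,x_0+x_1+x_2+x_3$, where $P_0,\dots,P_3$ are the fundamental points and $P_4=[1:1:1:1]$. First I would check the hypothesis that there is no syzygy of degree $i$ among the generators: by Lemma~\ref{lem-syz} this amounts to $5\,r_{N-2}\le r_{2N-2}$, i.e.\ $5\binom{N+1}{3}\le\binom{2N+1}{3}$, which after clearing denominators reduces to $0\le 3N^3+3N$ and hence holds for every $N\ge 3$.

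Next I would determine the sign of $N(r,i,k,d)=r(r_i-r_{i-k})-(r_{d+i}-r_{d+i-k})$. Using $r_{N-2}-r_{N-3}=\binom{N}{2}$ and $r_{2N-2}-r_{2N-3}=\binom{2N}{2}$ one finds $N(5,N-2,1,N)=\tfrac{5}{2}N(N-1)-N(2N-1)=\tfrac12 N(N-3)\ge 0$, so that $N^-=0$. This places me exactly in the situation of Remark~\ref{cor_cones} after Theorem~\ref{th1bis}: the ideal fails the WLP in degree $2N-3$ if and only if, for a general point $M=[L^\vee]\in\p^3$, there is a nonzero form of degree $d+i=2N-2$ lying in $(I_{2N-2}^{\perp})^{*}$ and having multiplicity $d+i-k+1=2N-2$ at $M$. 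Since this multiplicity equals the degree, such a form is a cone with vertex $M$; and by the second item of the Remarks following Theorem~\ref{th1bis}, membership in $(I_{2N-2}^{\perp})^{*}$ means precisely vanishing to order $i+1=N-1$ at each of the five points $P_0,\dots,P_4$.

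It remains to exhibit such a cone, which is the geometric heart of the argument. I would project $\p^3$ from the general vertex $M$ onto a plane $\p^2$; for general $M$ the five points $P_0,\dots,P_4$ map to five points $\bar P_0,\dots,\bar P_4$ with no three collinear, so there is a unique smooth conic $Q\subset\p^2$ through them. Pulling $Q$ back produces a quadric cone with vertex $M$ passing through the five $P_j$, and I claim $f=Q^{\,N-1}$ is the required form: it has degree $2(N-1)=2N-2$, it is a cone with vertex $M$ of multiplicity $2(N-1)=2N-2$ there, and, since each $\bar P_j$ lies on $Q$, it vanishes to order at least $N-1$ at each $P_j$ (multiplicities are read off from the $\p^2$ picture because $f$ does not involve the coordinate through $M$). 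Hence the intersection in Remark~\ref{cor_cones} is nonzero for general $M$, giving $\delta\ge 1$, and the WLP fails in degree $2N-3$.

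The routine parts are the two numerical verifications (the syzygy inequality and the sign of $N(r,i,k,d)$); the step needing care is the passage through the projection, where I must confirm that the multiplicity of the cone at $P_j$ equals that of the plane curve at $\bar P_j$ and that for general $M$ the five images are in sufficiently general position for the conic to exist and be unique. The genuinely creative point — and the crux — is the observation that five general points always lie on a conic $Q$, so that its $(N-1)$-st power $Q^{\,N-1}$ automatically realizes the high multiplicity $N-1$ demanded at all five points at once, providing precisely the special singular hypersurface that forces the failure of the WLP.
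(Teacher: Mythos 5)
Your proposal is correct and follows essentially the same route as the paper: the same numerical checks (the syzygy condition via Lemma~\ref{lem-syz} and the sign of $N(r,i,k,d)$, which the paper phrases as $5\hh^0(\sO_L(N-2))\ge\hh^0(\sO_L(2N-2))$ for $N\ge 3$), and the same key construction of the quadric cone with vertex $M$ through the five dual points (the paper describes it as the cone over the conic at infinity through the five points, which is your projection argument), raised to the power $N-1$ to achieve multiplicity $N-1$ at the base points and $2N-2$ at $M$.
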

\begin{rem*}
Of course it is equivalent to say that $(L_1^N,\ldots, L_5^N)$  fails the WLP in degree $2N-3$ for $L_1, \ldots, L_5$ general linear forms.
\end{rem*}
\begin{proof}
Let us consider the syzygy bundle associated to the linear system
$$  \begin{CD}
 0@>>> K @>>>  \sO_{\p^{3}}^{5} @>(x_0^N,x_1^N,x_2^N,x_3^N, (x_0+x_1+x_2+x_3)^N)>>
 \sO_{\p^{3}}(N) @>>> 0.
\end{CD}$$
Since $5r_{N-2}< r_{2N-2}$  Lemma \ref{lem-syz} implies $\hh^0(K(N-2))=0.$
Let $L$ be a linear form. When $N\ge 3$ we have  $5\mathrm{h}^0(\sO_{L}(N-2))\ge \mathrm{h}^0(\sO_{L}(2N-2))$.
According to  Theorem \ref{th1bis} the failure of the WLP in degree $2N-3$ is equivalent to the existence of a surface
with multiplicity $N-1$ in the points $P_0,P_1,P_2,P_3$ and $P(1,1,1,1)$ and multiplicity $2N-2$ at  a moving point $M$.
The five concurrent lines in $M$ passing through $P_0,P_1,P_2,P_3,P$ belong to a quadric cone with equation $\{F=0\}$ (the cone over the conic
at infinity through the five points). Since $F^{N-1}\in \mathrm{H}^0(\cI_{M}^{2N-2}(2N-2))$ the hypersurface
$\{F^{N-1}=0\}$ has the  desired properties.
\end{proof}
In $\p^n$ there is always a quadric through $\frac{n(n+3)}{2}$ points in general position. Then given any general point
$M\in \p^{n+1}$, there is a quadratic cone with a vertex at $M$ and passing through $\frac{n(n+3)}{2}$ fixed points  in general position.
Then we prove,
\begin{prop}
\label{pr54-2}
In the following cases  the ideal $(L_1^N,\ldots, L_{\frac{n(n+3)}{2}}^N)$ fails the WLP in degree $2N-3$:
\begin{itemize}
 \item    $N= 3$ and $n\ge 2$,
\item $N=4$ and $2\le n\le 4$,
\item $N>4$ and $2\le n\le 3$.
\end{itemize}
\end{prop}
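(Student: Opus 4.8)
The plan is to read the failure of the WLP in degree $2N-3$ as a failure of the SLP at range $k=1$ and then to exhibit the single special hypersurface whose existence Theorem \ref{th1bis} shows to be equivalent to it. Work in $R=\C[x_0,\ldots,x_{n+1}]$, set $d=N$, $r=\tfrac{n(n+3)}{2}$ and $I=(L_1^N,\ldots,L_r^N)$ with the $L_j$ general, and take $i=N-2$; then $d+i-k=2N-3$, so that $I$ fails the WLP in degree $2N-3$ precisely when it fails the SLP at range $1$ in degree $d+i-k$. First I would clear the standing hypothesis of Theorem \ref{th1bis}: by Lemma \ref{lem-syz} there is no syzygy of degree $i=N-2$ among the $L_j^N$ as soon as $r\,r_{N-2}\le r_{2N-2}$, and this inequality holds throughout the three listed ranges, so $\mathrm{h}^0(K(N-2))=0$ and Theorem \ref{th1bis} is available.

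Next I would fix the regime of the section map. With $k=1$ the governing number is $N(r,N-2,1,N)=r(r_{N-2}-r_{N-3})-(r_{2N-2}-r_{2N-3})$, which on $\p^{n+1}$ equals $\tfrac{n(n+3)}{2}\binom{n+N-2}{n}-\binom{n+2N-2}{n}$; the numerical heart of the proof is that this is nonnegative (i.e.\ $N^-=0$) in exactly the listed cases. For $n=2$ it collapses to $\tfrac{N(N-3)}{2}\ge 0$, for $n=3$ to $\tfrac{N(N^2-7)}{6}\ge 0$, and the finitely many remaining pairs with $N\in\{3,4\}$ are checked by direct substitution. Granting $N^-=0$, the first Remark after Theorem \ref{th1bis} says that $I$ fails the SLP at range $1$ in degree $2N-3$ if and only if, for a general point $M\in\p^{n+1}$, there is a nonzero form of degree $2N-2$ in $(I_{2N-2}^{\perp})^*$ with multiplicity $2N-2$ at $M$; by the second Remark after that theorem this means a degree-$(2N-2)$ hypersurface which is a cone with vertex $M$ and has multiplicity $N-1$ at each of the points $[L_j^{\vee}]$.

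It remains to build this hypersurface, and here the observation quoted just before the statement is the point. Projecting the $r=\tfrac{n(n+3)}{2}$ points $[L_j^{\vee}]\in\p^{n+1}$ from the general vertex $M$ onto a hyperplane $\p^{n}$ (chosen not through $M$), generality of the $L_j$ makes the images $\tfrac{n(n+3)}{2}$ points in general position in $\p^{n}$, which is exactly the number of conditions cutting out a quadric; let $Q\subset\p^{n}$ be that quadric and $\widehat Q\subset\p^{n+1}$ the cone over $Q$ with vertex $M$. Then $\widehat Q$ has degree $2$, is a cone with vertex $M$, and passes through every $[L_j^{\vee}]$, so $\widehat Q^{\,N-1}$ has degree $2N-2$, multiplicity $2(N-1)=2N-2$ at $M$, and multiplicity at least $N-1$ at each $[L_j^{\vee}]$. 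Hence $\widehat Q^{\,N-1}$ is a nonzero element of $(I_{2N-2}^{\perp})^*\cap \mathrm{H}^0(\cI_{M}^{2N-2}(2N-2))$, the cokernel of $\mathrm{H}^0(\Phi_{I,L})$ is nonzero, and since $N^-=0$ this already destroys maximality of the rank; by Theorem \ref{th1bis} the WLP fails in degree $2N-3$.

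The main obstacle is the numerical step $N^-=0$ rather than the geometry: a single cone contributes only a one-dimensional piece of cokernel, which forces non-maximal rank \emph{exactly} when $N^-=0$, and this is what singles out the admissible pairs $(N,n)$. Were $N^-\ge 1$ one would instead have to produce more than $N^-$ independent special hypersurfaces; in the ranges excluded above the $(N-1)$-fold points impose independent conditions, no further cone appears, and the rank is in fact maximal, so the restriction on $(N,n)$ is genuinely forced by this argument. A secondary point to verify carefully is that the projected points really are in general position in $\p^{n}$, so that $Q$ exists and $\widehat Q^{\,N-1}$ is nonzero modulo $I_{2N-2}$.
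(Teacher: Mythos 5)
Your argument is essentially the paper's own proof: the same appeal to Lemma \ref{lem-syz} to clear the syzygy hypothesis, the same numerical condition $N^-=0$, and the same construction of a quadric cone with vertex at the moving point $M$ through the $\frac{n(n+3)}{2}$ points $[L_j^{\vee}]$, raised to the power $N-1$. On those points there is nothing to add.

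The one place where you (and, to be fair, the paper) are too quick is the numerical step, and your phrase ``the finitely many remaining pairs with $N\in\{3,4\}$ are checked by direct substitution'' is where the issue hides: for $N=3$ the first bullet allows \emph{all} $n\ge 2$, which is not finitely many cases. For $N=3$ the condition $N(r,N-2,1,N)\ge 0$ reads $\frac{n(n+3)(n+1)}{2}\ge\binom{n+4}{4}$, i.e.\ $(n-2)(n-4)\le 0$, which holds only for $2\le n\le 4$. For $N=3$ and $n\ge 5$ one has $N^->0$, the map $\HH^0(\Phi_{I,L})$ is expected to be injective rather than surjective, and by the Corollary to Theorem \ref{p1} one must then exhibit a cokernel of dimension strictly greater than $N^-$; a single quadric cone contributes only one dimension (indeed the quadrics of $\p^n$ through $\frac{n(n+3)}{2}$ general points form a pencil of dimension $0$), so your construction does not close that range, and a separate count in the spirit of Proposition \ref{pr54-3} would be needed. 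The paper's proof asserts the same ``if and only if'' for the inequality and is equally silent on $N=3$, $n\ge 5$, so this gap is inherited rather than introduced; within $2\le n\le 4$ for $N=3$, and in all the other listed cases, your argument is complete and coincides with the paper's.
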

\begin{proof}
 Let us consider the syzygy bundle associated to the linear system
$$  \begin{CD}
 0@>>> K @>>>  \sO_{\p^{n+1}}^{\frac{n(n+3)}{2}} @>>>
 \sO_{\p^{n+1}}(N) @>>> 0.
\end{CD}$$
Let $L$ a linear form. Then the inequality   ${\frac{n(n+3)}{2}}\mathrm{h}^0(\sO_{L}(N-2))\ge \mathrm{h}^0(\sO_{L}(2N-2))$
is true if and only if $N$ and $n$ are one of the possibilities stated in the theorem. In all these cases we have
$ \frac{n(n+3)}{2}r_{N-2}\le r_{2N-2}$, and by  Lemma \ref{lem-syz}, $\hh^0(K(N-2))=0$.

According to  Theorem \ref{th1bis} the failure of the WLP is equivalent to the existence of a hypersurface
with multiplicity $N-1$ in the points $[L_i^{\vee}]$ and multiplicity $2N-2$ at the moving point $M$.
The lines through $M$ and $[L_i^{\vee}]$  belong to a quadratic cone with equation $\{F=0\}$ (the cone over the quadric
at infinity through the  points). Since $F^{N-1}\in \mathrm{H}^0(\cI_{M}^{2N-2}(2N-2))$ the hypersurface
$\{F^{N-1}=0\}$ has the desired properties.
\end{proof}
\begin{prop}
\label{pr54-3}
The ideal
$I=(L_1^3, \ldots, L_8^3)$ fails the WLP in degree $3$ where
 $L_1, \ldots, L_8$ be general linear forms on $\p^6$.
\end{prop}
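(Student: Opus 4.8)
The plan is to run the machinery of Theorem \ref{th1bis} with $n=6$, $r=8$, $d=3$ and $i=k=1$, so that the failure of the WLP in degree $d+i-k=3$ is read off from a fat--point geometry on $\p^6$. First I would record the numerology: on $\p^6$ one has $r_0=1$, $r_1=7$, $r_3=84$, $r_4=210$, hence
$$ N(8,1,1,3)=8(r_1-r_0)-(r_4-r_3)=48-126=-78, $$
so that $N^{+}=0$ and $N^{-}=78$. Since $8r_1=56\le 210=r_4$, Lemma \ref{lem-syz} gives $\hh^0(K(1))=0$, so the hypothesis \lq\lq no syzygy of degree $1$\rq\rq\ required in Theorem \ref{th1bis} is satisfied and the theorem applies.

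By Theorem \ref{th1bis}\,\ref{item_iv_thm} the WLP fails in degree $3$ precisely when, for a general $M=L^{\vee}\in\p^6$,
$$ \dim_{\C}\bigl((I_{4}^{\perp})^{*}\cap \mathrm{H}^0(\cI_{M}^{4}(4))\bigr)\ \ge\ N^{-}+1\ =\ 79 . $$
Here $(I_4^{\perp})^{*}$ is the space of quartics of $\p^6$ having multiplicity $2$ at the eight points $[L_j^{\vee}]$, while $\mathrm{H}^0(\cI_M^4(4))$ is the space of quartics with multiplicity $4$ at $M$, i.e.\ of quartic cones with vertex $M$. Thus the statement becomes: the quartic cones with vertex $M$ that are singular at $[L_1^{\vee}],\dots,[L_8^{\vee}]$ form a family of dimension at least $79$.

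To produce such cones I would reuse the cone--over--infinity construction of Propositions \ref{pr54-1} and \ref{pr54-2}. Projecting $\p^6$ from $M$ onto a hyperplane $\p^5$, a quartic cone with vertex $M$ corresponds to a quartic of $\p^5$, and its multiplicity at a point $[L_j^{\vee}]\ne M$ equals the multiplicity of the associated quartic of $\p^5$ at the image point $\bar P_j$. Hence the family in question is identified with the quartics of $\p^5$ singular at the eight points $\bar P_1,\dots,\bar P_8$. As the quadrics of $\p^5$ form a $21$--dimensional space, there is a quadric $\bar Q$ through $\bar P_1,\dots,\bar P_8$; lifting it gives a quadric cone $Q$ with vertex $M$ through all the $[L_j^{\vee}]$, and $Q^2$ is then a quartic cone with vertex $M$ (multiplicity $4$ at $M$) and multiplicity $2$ at every $[L_j^{\vee}]$. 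This exhibits a nonzero element of $(I_4^{\perp})^{*}\cap \mathrm{H}^0(\cI_M^4(4))$, and more generally the products $Q'Q''$ of two such quadric cones span a subspace of it.

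The main obstacle is exactly the dimension count: the construction above only guarantees members of the intersection, whereas the criterion demands that its dimension strictly exceed $N^{-}=78$. Equivalently, one must show that the eight points $\bar P_j$ fail to impose independent conditions on the quartics of $\p^5$, or, in the language of Theorem \ref{th1bis}(2), that there is an honest relation $\sum_{j} G_j L_j^{3}=LG$ with $G_j\in R_1$ beyond the $N^{+}=0$ expected ones. I expect this to be the heart of the argument, and I would attack it by producing the extra syzygy explicitly from the special incidence of the $[L_j^{\vee}]$ on the quadric cone $Q$ (rather than from the mere existence of $Q^2$), or by a direct rank computation of $\mathrm{H}^0(\Phi_{I,L})$ on the general hyperplane $L$.
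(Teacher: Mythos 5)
Your reduction is exactly the paper's: the same numerology ($N^{+}=0$, $N^{-}=78$, and $\hh^0(K(1))=0$ via Lemma \ref{lem-syz}), the same translation through Theorem \ref{th1bis}\,\ref{item_iv_thm} into the claim that the quartic cones with vertex $M$ which are double at the eight points $[L_j^{\vee}]$ --- equivalently, the quartics of $\p^5$ double at the eight projected points $\bar P_1,\dots,\bar P_8$ --- form a space of dimension at least $79$, and the same cone-over-the-hyperplane-at-infinity mechanism for producing members of that space. But you stop precisely where the proof has to begin: the inequality $\dim\ge 79$ is never established, and your final paragraph replaces it by a statement of intent (``I would attack it by\dots''). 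This is a genuine gap, not a formality. For eight general double points of $\p^5$ the expected dimension of the quartics double at them is exactly $\hh^0(\sO_{\p^5}(4))-8\cdot 6=126-48=78=N^{-}$, so exhibiting one cone $Q^2$, or even a sizeable family of products $Q'Q''$, proves nothing until one shows the family has dimension strictly larger than $78$.

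The paper closes the gap with the count you had within reach. The space $V$ of quadrics of $\p^5$ through $\bar P_1,\dots,\bar P_8$ has dimension $21-8=13$; for a basis $Q_1,\dots,Q_{13}$ every product $Q_iQ_j$ is a quartic singular at the eight points, and the paper asserts that these products span a space of dimension $\binom{14}{2}=91>78$, so that the cones over them already exceed the bound $N^{-}$. In other words, the family $Q'Q''$ that you dismiss as merely ``a subspace'' is, in the paper, the entire argument: what is needed is its dimension, not its nonemptiness. Your instinct that the independence question is ``the heart of the argument'' is sound --- the claim that the multiplication map $\mathrm{Sym}^2(V)\to \HH^0(\sO_{\p^5}(4))$ is injective (equivalently, that the eight double points fail by at least $13$ to impose independent conditions on quartics, a case not covered by the usual expected-dimension heuristics) is a real assertion, which the paper states without further justification. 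A complete write-up must either prove that this image has dimension at least $79$, or produce the extra syzygy of Theorem \ref{th1bis}(2) by some other explicit means; as written, your proposal does neither.
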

\begin{proof}
Since $8r_{1}< r_{4}$   Lemma \ref{lem-syz} implies $\hh^0(K(1))=0.$
We have to prove that, on  a general hyperplane $L$, the cokernel of
$  \begin{CD}
  \HH^0(\sO_{L}(1))^{8} @>>>
 \HH^0(\sO_{L}(4))
\end{CD}$
has dimension strictly greater than $  \hh^0(\sO_{L}(4)) -\hh^0(\sO_{L}(1))^{8}=78.$
The dimension of this cokernel is the dimension of  the quartics with a quadruple point
$[L^{\vee}]$  and $8$ double points. We consider on the hyperplane  at infinity the vector space $V$ of quadrics through the images of the $8$ points
$[L_1^{\vee}], \ldots, [L_8^{\vee}]$. It has dimension $13$. Let $Q_1, \ldots, Q_{13}$ be a basis of this space of quadrics. Then the vector space
$\mathrm{Sym}^2(V)$ of quartics generated by the products $Q_iQ_j$ has dimension $91$ and all these quartics are singular in the $8$ points.
In $\p^6$ the independent quartic cones over these quartics belong to the cokernel.
\end{proof}
In the next section, we propose many examples of ideals failing the WLP or the SLP by producing ad-hoc singular hypersurfaces.
\section{Classes of ideals failing the WLP and the SLP}
 \label{s4}
\subsection{Monomial ideals coming from singular hypersurfaces}
In their nice paper about osculating spaces of Veronese surfaces, Lanteri and Mallavibarena remark that the equation of the curve given by three
concurrent lines depends only on six monomials instead of seven. More precisely let us consider  a cubic with a triple point at $(a,b,c)$ passing through
$P_0$, $P_1$ and $P_2$. Its equation is $(bz-cy)(az-cx)(ay-bx)=0$ and it depends only on the monomials
$x^2y,xy^2,x^2z,xz^2,y^2z, yz^2$. So there is a non zero form in
$$(I_{3}^{\perp})^*=<x^2y,xy^2,x^2z,xz^2,y^2z, yz^2>\simeq\frac{R_3}{<x^3,y^3,z^3, xyz>}$$ that is triple at a general point. In this way they explain
the Togliatti surprising phenomena (\cite[Theorem 4.1]{LM}, \cite{I2} and \cite{FI}).

We apply this idea in our context. Recall that in the monomial case being artinian to the ideal $I$ means
that it contains the forms  $x_0^d, \ldots, x_n^d$.  Let us consider the $(n+1)$ fundamental points
$P_0, P_1, \ldots, P_n$ and let us assume  that the number $r$ of monomials generating  $I$
is chosen such  that $N(r,i,k,d)=0$ for  $i\ge 0,\,\, k\ge 1$ fixed integers.
 Then, as it is noted in item \ref{cor_cones} of Remarks after Theorem \ref{th1bis},
the ideal $I$ fails the SLP at the range $k$ in degree $d+i-k$ if and only if there exists  at any point
$M$  a hypersurface of degree $d+i$  with multiplicity $d+i-k+1$ at $M$ given by a form in $(I_{d+i}^{\perp})^*\simeq R_{d+i}/I_{d+i}$.
 We have to write this equation with a number of monomials as small as possible. Then the orthogonal space becomes bigger and we will cover all the possible choices.

First of all we  describe exhaustively  the   monomial ideals $(x^4,y^4, z^4, f,g)\subset \C[x,y,z]$ of degree $4$  that do not verify the WLP.
\begin{thm}
 \label{th3}
Up to permutation of variables the monomial ideals generated by five quartic forms in $\C[x,y,z]$ that fail the WLP in degree $3$ are
 the following
\begin{itemize}
 \item   $I_1= (x^4,y^4,z^4, x^3z, x^3y)$,
\item  $ I_2=(x^4,y^4,z^4, x^2y^2, xyz^2)$.
\end{itemize}
\end{thm}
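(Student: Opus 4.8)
The plan is to translate the failure of the WLP into the existence of a very special plane quartic via the dictionary of Theorem \ref{th1bis}, and then classify the admissible monomial pairs by a direct coefficient computation. Since $I$ is a monomial artinian ideal minimally generated by five quartics, it must contain $x^4,y^4,z^4$, so $I=(x^4,y^4,z^4,f,g)$ for two further quartic monomials $f,g$; the five monomials being distinct, hence linearly independent in $R_4$, we have $\hh^0(K)=0$ and the hypothesis of Theorem \ref{th1bis} holds. Here $n=2$, $d=4$, $r=5$, $k=1$, $i=0$, so $N(5,0,1,4)=5(r_0-r_{-1})-(r_4-r_3)=5-5=0$ and $N^{+}=N^{-}=0$. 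By item \ref{cor_cones} of the Remarks after Theorem \ref{th1bis}, $I$ fails the WLP in degree $3$ if and only if, for a general point $M=[a:b:c]$, there is a nonzero quartic with a point of multiplicity $4$ at $M$ whose equation lies in $(I_4^{\perp})^{*}\simeq R_4/I_4$, i.e.\ whose expansion avoids $x^4,y^4,z^4,f,g$.

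First I would describe all such quartics. A plane quartic with a point of multiplicity $4$ at $M$ is a product of four lines through $M$. The coefficient of $x^4$ (resp.\ $y^4$, $z^4$) is the product of the $x$- (resp.\ $y$-, $z$-) coefficients of the four factors, so it vanishes exactly when one factor passes through $P_0$ (resp.\ $P_1$, $P_2$). Since no line through a general $M$ meets two of the $P_j$, these are three distinct forced factors, namely $MP_0=bz-cy$, $MP_1=cx-az$, $MP_2=ay-bx$, whose product is the Lanteri--Mallavibarena cubic $C=(bz-cy)(cx-az)(ay-bx)$ of \cite{LM}. Hence every admissible quartic is $Q=C\cdot\ell$ with $\ell=px+qy+rz$ a free line through $M$, i.e.\ $pa+qb+rc=0$, and $Q\neq 0$ as soon as $\ell\neq 0$.

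Next I would expand $Q=C\ell$ and record, for each of the twelve monomials other than $x^4,y^4,z^4$, its coefficient as a linear form in $(p,q,r)$ with coefficients in $a,b,c$. The decisive observation is that these twelve coefficients group into six proportional pairs: for instance the coefficients of $x^3y$ and $x^3z$ are $bc^2p$ and $-b^2cp$, both multiples of $p$; those of $x^2y^2$ and $xyz^2$ are $c^2(bq-ap)$ and $ab(bq-ap)$, both multiples of $bq-ap$; the remaining eight arise by permuting $x,y,z$. Thus imposing that $Q$ avoid a whole proportional pair $\{f,g\}$ is a single linear condition on $\ell$, which together with $pa+qb+rc=0$ has a nonzero solution for general $M$, yielding a nonzero admissible $Q=C\ell$; so $I$ fails the WLP. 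The six proportional pairs form exactly two orbits under permutation of the variables, represented by $\{x^3y,x^3z\}$ and $\{x^2y^2,xyz^2\}$, which give $I_1$ and $I_2$.

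The main obstacle, and the heart of the ``only if'' direction, is to show that no other pair works. Writing $\lambda=(a,b,c)$ for the coefficient vector of the line constraint and $L_f,L_g$ for the coefficient vectors of $f,g$, an admissible $\ell$ exists for general $M$ precisely when $\lambda,L_f,L_g$ are linearly dependent over $\C(a,b,c)$. This forces either $L_f\parallel L_g$ (which I would check occurs only for the six proportional pairs) or $\lambda\in\langle L_f,L_g\rangle$. For the latter I would sort the twelve vectors by their support in $(p,q,r)$: only pairs whose supports cover all three coordinates can express the full-support vector $\lambda$, and since each two-coordinate vector has its nonzero entries proportional to $(-a,b)$, $(a,-c)$ or $(-b,c)$ while $\lambda=(a,b,c)$, a short computation yields a contradiction for general $M$ (one is led either to $2ab=0$, or to a coordinate forced to equal $-2a$ instead of $a$). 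Hence dependence occurs only for the proportional pairs, and $I_1,I_2$ are the complete list up to permutation.
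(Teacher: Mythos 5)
Your proposal is correct and follows essentially the same route as the paper: both reduce, via Theorem \ref{th1bis}, to the existence of a quartic with a quadruple point at a general point $M=(a,b,c)$ avoiding the five generators, observe that such a quartic is forced to be the cubic $(bz-cy)(cx-az)(ay-bx)$ times a fourth line through $M$, and then determine which pairs among the twelve remaining monomials can be simultaneously removed. Your bookkeeping via the six proportional coefficient pairs and the support/linear-dependence argument makes explicit the exhaustiveness of the case analysis that the paper only asserts through its conditions on $\alpha,\beta$, but the underlying computation is the same.
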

\begin{rem*}
 Geometrically it is evident that the first ideal  $ (x^4,y^4,z^4, x^3z, x^3y)$ fails the WLP. Indeed
  under the Veronese map, a linear form $L$ becomes a rational normal curve of degree four that defines a projective space $\p^4$ and, modulo $L$, the restricted monomials $\bar{x}^i\bar{y}^j$ can be interpreted as points of this $\p^4$.
 Then
the tangent line to the rational quartic curve  at the point $[\bar{x}^4]$ contains
the two points $[\bar{x}^3\bar{y}]$ and $[\bar{x}^3\bar{z}]$. This line  meets the plane $\p(<\bar{x}^4,\bar{y}^4,\bar{z}^4>)$ in one point; it implies that
$$\mathrm{dim}_{\C}<\bar{x}^4,\bar{y}^4,\bar{z}^4,\bar{x}^3\bar{y},\bar{x}^3\bar{z} >\le 4.$$
For the second ideal, it is not  evident  to see  that the line $\p(<\bar{x}^2\bar{y}^2,\bar{x}\bar{y}\bar{z}^2>)$ always (for any restriction) meets the plane $\p(<\bar{x}^4,\bar{y}^4,\bar{z}^4>)$.
\end{rem*}
\begin{proof}
Let us consider the points $P_0$ , $P_1$ and $P_2$ and
 the degree $4$ curves with a quadruple point in $(a,b,c)$ passing through these three points. These curves are product
of four lines:
$$ f(x,y,z)= (ay-bx)(cx-az)(cy-bz)(\alpha(ay-bx)+\beta(cx-az))=0.$$
 \begin{figure}[h!]
    \centering
    \includegraphics[height=5cm]{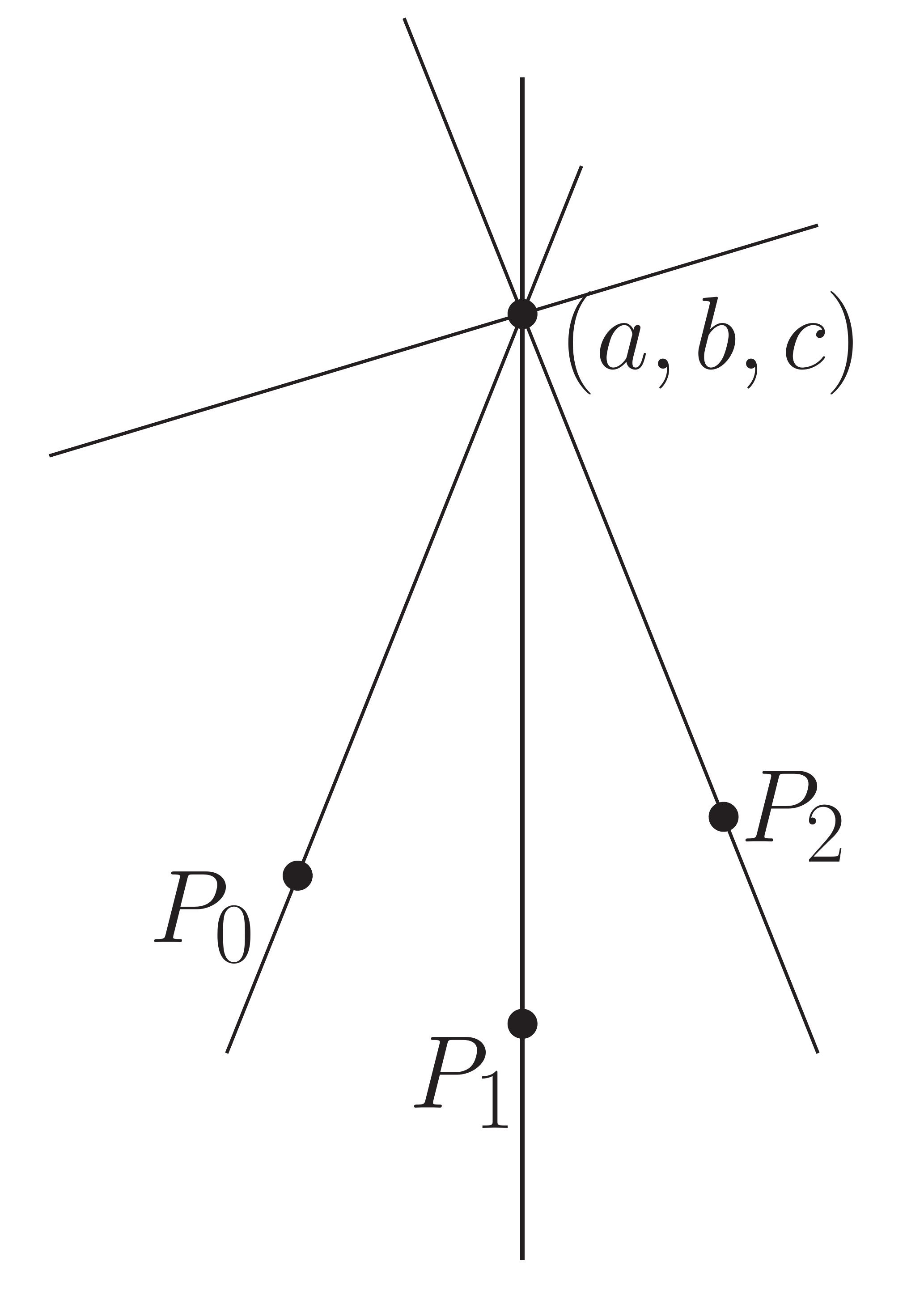}
    \caption{Quartic with a quadruple point}
  \end{figure}
Expanding $f$ explicitly in the coordinates $(x,y,z)$, we see that the forms $x^4,y^4,z^4$ are missing
and that
twelve monomials appear to write its equation.
Since we want only ten monomials, we have to remove two. The following possibilities occur:
\begin{itemize}
 \item $\alpha=0$ (or equivalently by permutation of variables  $[\beta=0]$ or $[\alpha\neq 0$, $\beta \neq 0$ and $b\alpha =c\beta]$) then  the remaining linear system is
$ (x^4,y^4,z^4, y^3z, xy^3).$ It corresponds to the first case i.e. to the ideal $I_1$.
\item  $\alpha\neq 0 $ and $\beta \neq 0$ but $c\beta +b\alpha=0$ (or equivalentely by permutation of variables $[2b\alpha-c\beta=0]$ or
$[b\alpha-2c\beta=0]$) then the remaining linear system is
$ (x^4,y^4,z^4, x^2yz, y^2z^2).$ It corresponds to the second  case i.e. to the ideal $I_2$.
\end{itemize}
\end{proof}

\begin{rem*} The quartic curve with multiplicity four in $(a,b,c)$ consists,  in the first case,  of two lines and a double line that are concurrent; in the second case  of four concurrent lines in harmonic division.
\end{rem*}

We  do not apply the same technique to describe exhaustively  the  monomial ideals $(x^5,y^5, z^5, f,g,h)\subset \C[x,y,z]$ of degree $5$
 that do not verify the WLP because the computations become too tricky. But we can give some cases by geometric arguments.
\begin{prop}
 \label{th4}
The following monomial ideals
\begin{itemize}
 \item $ (x^5,y^5,z^5, x^3y^2, x^3z^2,x^3yz)$,
\item  $ (x^5,y^5,z^5, x^4z, x^4y,m)$, where $m$ is any monomial,
\item $ (x^5,y^5,z^5, x^3y^2, x^2y^3,x^2y^2z)$,
\end{itemize}
fail the $\mathrm{WLP}$ in degree $4$.
\end{prop}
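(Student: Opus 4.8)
The plan is to handle the three ideals uniformly through Theorem~\ref{th1bis} and then, for each, to produce inside one fixed $6$-dimensional space a nonzero quintic avoiding the relevant monomials. Here $n=2$, $d=5$, $r=6$, and we are testing the WLP ($k=1$) in degree $4$, i.e.\ $i=0$, $k=1$, $d+i-k=4$. In every case the six generators are distinct monomials, hence linearly independent, so there is no syzygy of degree $0$ and $\hh^0(K)=0$; moreover
\[
N(6,0,1,5)=6(r_0-r_{-1})-(r_5-r_4)=6-(21-15)=0,
\]
whence $N^{+}=N^{-}=0$. By item~\ref{cor_cones} of the Remarks following Theorem~\ref{th1bis} (the case $N^{-}=0$), the ideal $I$ fails the WLP in degree $4$ if and only if, for a general point $M\in\p^2$, there is a nonzero quintic with a point of multiplicity $5$ at $M$ whose equation lies in $(I_5^{\perp})^{*}$, i.e.\ has zero coefficient on each of the six generator monomials. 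Writing $W=\{\ell\in R_1:\ell(M)=0\}$ for the $2$-dimensional space of lines through $M$, the quintics of multiplicity $5$ at $M$ are exactly the elements of $\mathrm{Sym}^5 W\subset R_5$, a space of dimension $6$. So everything reduces to exhibiting a nonzero element of $\mathrm{Sym}^5 W$ killed by the six coefficient functionals attached to the generators.

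For the first two ideals I would exploit the line $MP_0\colon\ cy-bz\in W$, which is free of $x$, to force a low $x$-degree. For $I=(x^5,y^5,z^5,x^3y^2,x^3z^2,x^3yz)$ every forbidden monomial except $y^5,z^5$ has $x$-degree $\ge 3$, so on the $3$-dimensional subspace $(cy-bz)^3\cdot\mathrm{Sym}^2 W$, all of whose elements have $x$-degree $\le 2$, these vanish automatically; imposing the two remaining conditions on the coefficients of $y^5$ and $z^5$ leaves a solution space of dimension $\ge 3-2=1$. Concretely the single quintic $(cy-bz)^3(cx-az)(bx-ay)$ (the line $MP_0$ thrice together with $MP_1$ and $MP_2$) already works. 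For $I=(x^5,y^5,z^5,x^4z,x^4y,m)$ the monomials $x^5,x^4y,x^4z$ all have $x$-degree $\ge 4$, so on the $4$-dimensional subspace $(cy-bz)^2\cdot\mathrm{Sym}^3 W$ (elements of $x$-degree $\le 3$) they vanish automatically, and the three remaining conditions on the coefficients of $y^5$, $z^5$ and $m$ cut out a space of dimension $\ge 4-3=1$, regardless of which monomial $m$ is (if $m$ has $x$-degree $\ge 4$ it is killed for free). In both cases the resulting nonzero form lies in $\mathrm{Sym}^5 W$, hence has the required multiplicity at $M$.

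The third ideal $I=(x^5,y^5,z^5,x^3y^2,x^2y^3,x^2y^2z)$ is the genuine obstacle: its three mixed generators are all divisible by $x^2y^2$, so no bound on a single variable's degree annihilates exactly them and the subspace trick is unavailable. Here I would argue by a direct rank computation. Fixing the basis $u=cx-az$, $w=cy-bz$ of $W$, I expand the general element $\sum_{j=0}^{5}c_j\,u^{j}w^{5-j}$ of $\mathrm{Sym}^5 W$ and tabulate the $6\times 6$ matrix whose rows are the coefficients of $x^5,y^5,z^5,x^3y^2,x^2y^3,x^2y^2z$ and whose columns are indexed by $j$. The key observation is that among these six monomials only $z^5$ receives any contribution from $u\,w^{4}$ or from $u^{4}w$ (the other five are blocked by $x$- or $y$-degree), so the two columns $j=1$ and $j=4$ are supported solely in the $z^5$-row; no permutation can cover both, the determinant vanishes identically in $a,b,c$, and a nonzero solution $(c_j)$ exists. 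This gives the sought quintic and completes the proof for the third ideal. The one step meriting care is exactly this vanishing of the $6\times 6$ determinant, for which the ``two columns, one support row'' remark is the decisive shortcut.
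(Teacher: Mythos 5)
Your argument is correct, and it is worth recording how it differs from the paper's. Both proofs rest on Theorem~\ref{th1bis} with the same numerology ($N(6,0,1,5)=0$, no degree-$0$ syzygy since the generators are distinct monomials), but you work consistently on the dual side of that theorem -- condition \ref{item_iv_thm}, the existence of a quintic with a quintuple point at a general $M$ lying in $(I_5^{\perp})^*$ -- whereas the paper handles the first two ideals on the Veronese side: modulo a general line $L$ the six restricted monomials become points of $\p^5$, and the tangent line (resp.\ osculating plane) to the rational normal quintic at $[\bar{x}^5]$ forces $\dim_{\C}\langle \bar{x}^5,\bar{y}^5,\bar{z}^5,\bar{x}^4\bar{y},\bar{x}^4\bar{z},\bar{m}\rangle\le 5$, i.e.\ non-injectivity of $\mathrm{H}^0(\Phi_{I,L})$. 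Your explicit quintics $(cy-bz)^3(cx-az)(bx-ay)$ and the element of $(cy-bz)^2\cdot\mathrm{Sym}^3W$ cut out by three conditions are exactly the dual witnesses of those syzygies; the paper's route is shorter for these two cases, yours has the merit of being uniform across all three. For the third ideal the two proofs converge: the paper takes five concurrent lines $XY(aX+bY)(\alpha X+\beta Y)(\gamma X+\delta Y)$ with $X=bz-cy$, $Y=cx-az$ and solves two equations in $(\alpha,\beta,\gamma,\delta)$ to kill the $X^3Y^2$ and $X^2Y^3$ coefficients, landing in the pencil $\langle X^4Y, XY^4\rangle$, which is precisely your $\langle uw^4,u^4w\rangle$. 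Your observation that among the six forbidden monomials only $z^5$ receives a contribution from either $uw^4$ or $u^4w$ (by the $x$- and $y$-degree bounds) replaces the paper's parameter-solving step -- whose solvability with a nonzero resulting form the paper does not actually verify -- by a one-line rank argument, and even yields the closed form $a^3uw^4-b^3u^4w$. This is a genuine, if modest, improvement in rigor on that case.
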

\begin{proof}
  Under the Veronese map, a linear form $L$ becomes a rational normal curve of degree five that defines a projective space $\p^5$ and, modulo $L$, the restricted monomials $\bar{x}^i\bar{y}^j$ can be interpreted as points of this $\p^5$. Then
the tangent line to the rational quintic curve  at the point $[\bar{x}^5]$ contains
the two points $[\bar{x}^4\bar{y}]$ and $[\bar{x}^4\bar{z}]$. This line  meets the plane $\p(<\bar{x}^5,\bar{y}^5,\bar{z}^5>)$ in one point; it implies that
$$\mathrm{dim}_{\C}<\bar{x}^5,\bar{y}^5,\bar{z}^5,\bar{x}^4\bar{y},\bar{x}^4\bar{z}, \bar{m} >\le 5.$$
In the same way the osculating plane at $[\bar{x}^5]$ i.e. $\p(<\bar{x}^3\bar{y}^2,\bar{x}^3\bar{z}^2,\bar{x}^3\bar{y}\bar{z} >)$ meets the plane  $\p(<\bar{x}^5,\bar{y}^5,\bar{z}^5>)$ in one point.

\noindent In the last case, the geometric argument is not so evident. Let us set $X=bz-cy$ and
$Y=cx-az$. Then the equation of the product of the five concurrent lines is $f(X(x,y,z),Y(x,y,z))=XY(aX+bY)(\alpha X+\beta Y) (\gamma X+\delta Y)=\\
a\alpha \gamma X^4Y+(a\alpha \gamma +b\alpha \gamma +a\alpha \delta)X^3Y^2+
(b\beta \gamma +b\alpha \delta +a\beta \delta)X^2Y^3 +b\beta \delta XY^4=0.$

\noindent For any point $M(a,b,c,d)$ we choose $(\alpha, \beta, \gamma, \delta)$ such that  $a\alpha \gamma +b\alpha \gamma +a\alpha \delta=0$ and
$b\beta \gamma +b\alpha \delta +a\beta \delta=0$.
Then the equation depends only on $15$ monomials  and the remaining monomials are
$(x^5,y^5,z^5, x^3y^2, x^2y^3,x^2y^2z).$
\end{proof}

We  describe now some monomial ideals in $ \C[x,y,z,t]$, generated in degree $3$, that do not verify the WLP.
\begin{prop}
 \label{th3-1}
The monomial ideals
 $I=(x^3,y^3,z^3,t^3,  f_1,f_2,f_3,f_4,f_5,f_6)$ where the forms $f_i$ are chosen among one of the following sets of monomials:
\begin{itemize}
\item  $ \{ x^2y, xy^2,x^2z, x^2t, y^2z, y^2t,z^2t, zt^2, xyz, xyt \},$ (Case (A1))
\item  $ \{ x^2y, xy^2, xz^2, y^2z, yz^2,  y^2t, zt^2, z^2t \},$ (Case (A2))
\item  $ \{ x^2y,xy^2, z^2t,zt^2, xyz,xyt,xzt,yzt\},$ (Case (A3))
\item  $ \{ xz^2, yz^2,xyz,xyt, x^2y,xy^2,z^2t,zt^2 \},$ (Case (A4))
\item  $ \{ x^2y, xy^2, x^2z,xz^2, x^2t, xt^2, xyz, xzt, xyt,yzt \},$ (Case (B1))
\end{itemize}
fail the WLP in degree $2$.
\end{prop}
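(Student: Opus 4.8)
The plan is to apply Theorem \ref{th1bis} with $n=3$, $d=3$, $i=0$, $k=1$, reducing the failure of the WLP in degree $2$ to a statement about singular cubics. First I would record the numerics: here $r_0=1$, $r_2=10$, $r_3=20$, so $N(r,0,1,3)=r-10\ge 0$ and hence $N^-=0$ in every case; moreover the generators are pairwise distinct monomials, hence linearly independent, so there is no syzygy of degree $0$ and $\hh^0(K)=0$. By Theorem \ref{th1bis} and item \ref{cor_cones} of the Remarks following it, $I$ fails the WLP in degree $2$ if and only if for a general point $M\in\p^3$ there is a nonzero form lying in $(I_3^\perp)^*$ — the span of the degree-$3$ monomials \emph{not} among the generators — with a point of multiplicity $3$ at $M$, that is, a cubic cone with vertex $M$. (Upper semicontinuity of $\dim\big((I_3^\perp)^*\cap\HH^0(\cI_{M}^{3}(3))\big)$ then upgrades ``general $M$'' to ``all $M$''.)

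For each case I would first compute the complementary monomial set $S$ (the $20-r$ monomials absent from the generators), so that the task becomes to produce a cubic cone with vertex at a general $M=(a,b,c,e)$ whose monomial support lies in $\langle S\rangle$. A convenient uniform parametrization writes every such cone as $g=\Phi(bx-ay,\,cx-az,\,ex-at)$ for a ternary cubic $\Phi(u,v,w)$, since $bx-ay$, $cx-az$, $ex-at$ span the linear forms vanishing at $M$; the coefficient in $g$ of $x^{3-p-q-r}y^{p}z^{q}t^{r}$ is then proportional to $\Phi_{u^{p}v^{q}w^{r}}(b,c,e)$. Requiring $g\in\langle S\rangle$ thus becomes a homogeneous linear system on the coefficients of $\Phi$ (the vanishing at $(b,c,e)$ of $\Phi$ together with the partials indexed by the forbidden monomials), and the content is that this system has a nonzero solution for general $(b,c,e)$.

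In the symmetric cases the solution is an honest product of planes through $M$. For case (A1), where $S=\{xz^{2},xt^{2},yz^{2},yt^{2},xzt,yzt\}$, the plane-times-doubled-plane $(bx-ay)(ez-ct)^{2}$ has support exactly $S$. For case (B1), where $S=\{y^{2}z,y^{2}t,yz^{2},yt^{2},z^{2}t,zt^{2}\}$ is $x$-free, the three planes $ey-bt$, $ez-ct$, $cy-bz$ lie in a pencil whose axis is the line through $P_0$ and $M$; their product $(ey-bt)(ez-ct)(cy-bz)$ has vanishing $yzt$-coefficient, lands in $\langle S\rangle$, and still has multiplicity $3$ at $M$ precisely because $M$ lies on that triple line. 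These two computations are routine cancellations.

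The main obstacle is the remaining cases (A2), (A3), (A4), where $S$ contains the mixed monomials $xyz,xyt,xzt,yzt$ (case (A2)) or an asymmetric mixture of squares (case (A3): $S=\{x^{2}z,x^{2}t,y^{2}z,y^{2}t,xz^{2},yz^{2},xt^{2},yt^{2}\}$). There the ``three planes through $M$'' ansatz breaks down, since imposing the support forces the third plane off $M$; one must instead allow a reducible cone of the form (plane through $M$)$\times$(quadric cone with vertex $M$), equivalently a $\Phi$ that does not factor into three linear forms. The crux is that the associated linear system on $\Phi$ has rank one less than expected: after eliminating the pure-cube and the mixed coefficients, the surviving conditions collapse by one (in (A3) the four remaining conditions on $\Phi$ reduce to three, the last being an automatic consequence of the others, yielding $\Phi=u\cdot\Psi$ with $\Psi$ a quadric). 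Verifying this rank drop — equivalently, exhibiting the explicit quadric cone factor in each of (A2), (A3), (A4) — is the computational heart of the proof, and is what I expect to require the most care.
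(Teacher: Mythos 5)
Your reduction is exactly the paper's: with $r=10$, $i=0$, $k=1$ one gets $N=0$ and no degree-$0$ syzygies, so by Theorem \ref{th1bis} (item 1 of the Remarks following it) the failure of the WLP in degree $2$ amounts to exhibiting, at a general point $M=(a,b,c,d)$, a cubic cone with vertex $M$ whose support avoids $I_3$; and your cones for (A1) and (B1) coincide with the paper's $(bx-ay)(dz-ct)^2$ and $(cy-bz)(dz-ct)(dy-bt)$.

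The gap is in cases (A2), (A3), (A4), which you explicitly leave unfinished, and the heuristic you give for why they are harder is wrong. You assert that the ``three planes through $M$'' ansatz breaks down there and that one must use a plane times an irreducible quadric cone; in fact the paper settles all three cases with products of three planes through $M$. What you overlook is that only the product of the first two factors needs to hit the fundamental points: $(bx-ay)(dz-ct)$ already vanishes at $P_0,P_1,P_2,P_3$, so the third factor can be \emph{any} plane through $M$, and its remaining coefficients are exactly the freedom needed to cancel the forbidden monomials. Concretely the paper takes $(bx-ay)(dz-ct)(cx-az)$ for (A2), $(bx-ay)(dz-ct)(bx+ay+u(dz+ct))$ with $ab+ucd=0$ for (A3), and $(bx-ay)(dz-ct)(bdx+ady-2abt)$ for (A4); each third factor vanishes at $M$, and direct expansion (e.g.\ the cancellation of the $xzt$ and $yzt$ coefficients in (A3)) places the support inside the required complementary set. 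Since producing these cones is the whole content of the proposition in those three cases, your proposal as written does not prove them. (A side remark: expanding the paper's (A2) cubic gives support $\{x^2z,x^2t,xz^2,xzt,xyz,xyt,yz^2,yzt\}$, whose complement contains $xt^2,yt^2$ rather than the $xz^2,yz^2$ listed in the statement; the factor $cx-az$ should apparently be $dx-at$. That is a typo on the paper's side, not a vindication of your claim that three planes cannot work.)
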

\begin{rem*}
We do not know if under permutation of variables the description above is exhaustive or not.  The singular cubic that we are considering here are union of concurrent planes and not
all the cubic cones.
\end{rem*}
\begin{proof}
We look for  a surface of degree $3$ with multiplicity $3$   at a general point $M(a,b,c,d)$  that passes through the  points
$P_0, P_1, P_2, P_3$
such that its equation  depends only on the remaining monomials
in $R_3/I_3$. Such a cubic surface is a cone over a cubic curve. Here, instead of a general cubic cone we consider only three concurrent planes.
Since these $3$ planes have to pass through $P_0, P_1, P_2$ and  $P_3$ it remains only, after a simple verification,  the following cases:
  \begin{figure}[h!]\centering
   \begin{subfigure}[b]{0.3\textwidth}
    \includegraphics[height=2.5cm]{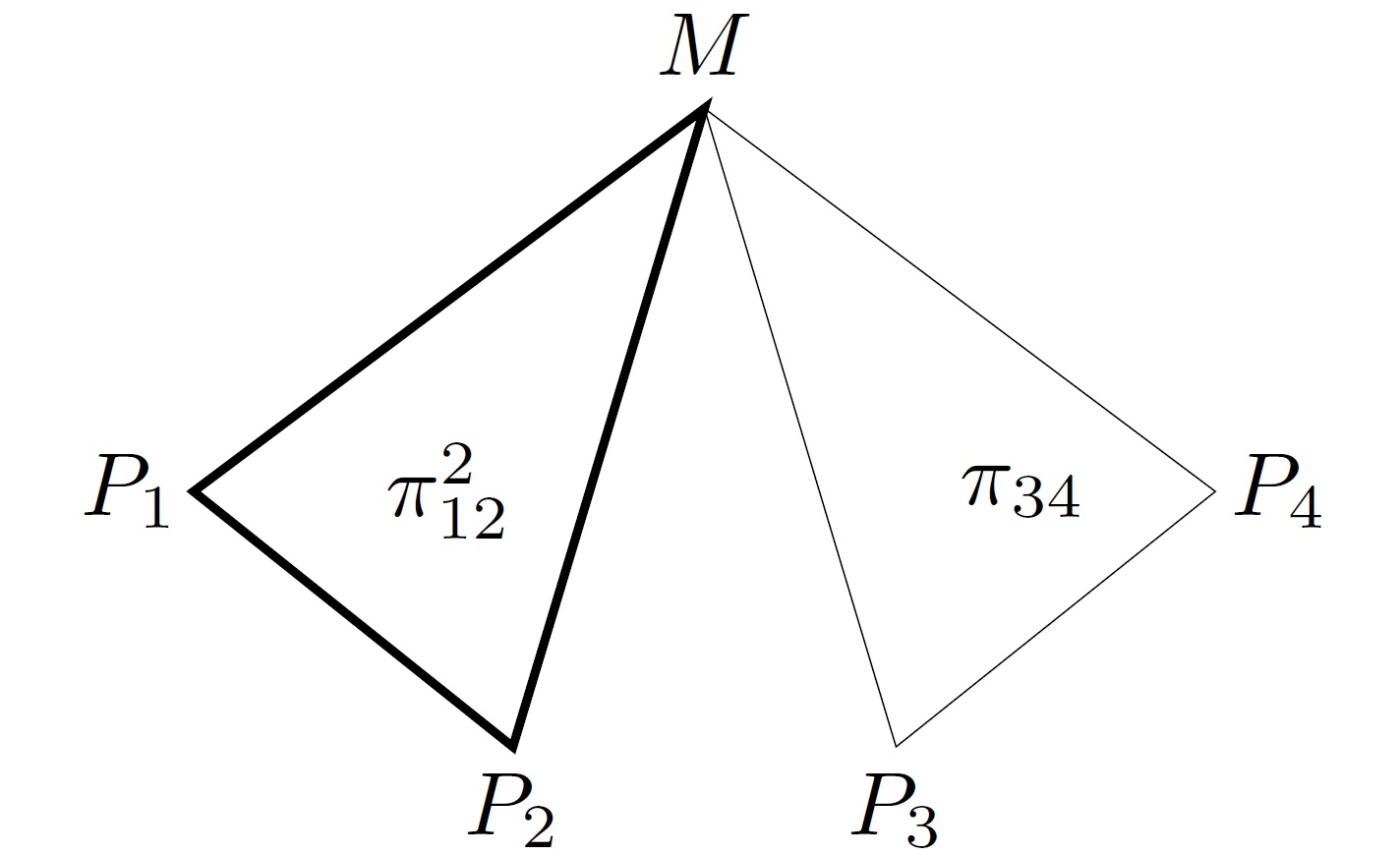}
  \caption{Case (A1).}\label{pi12^2-pi34}
 \end{subfigure}
  \begin{subfigure}[b]{0.3\textwidth}
      \includegraphics[height=2.5cm]{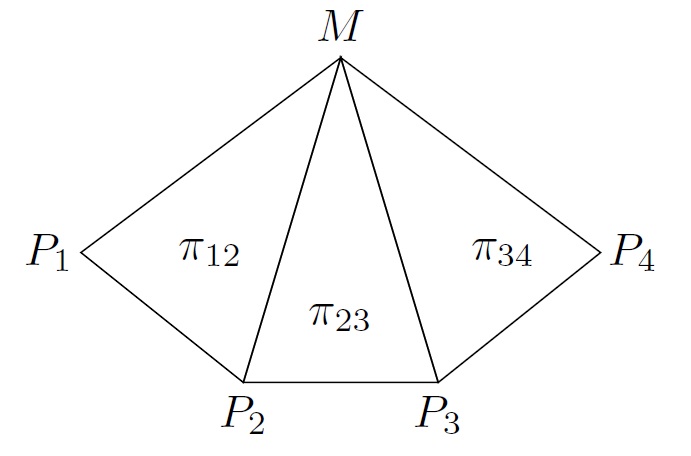}
    \caption{Case (A2).}\label{pi12-pi23-pi34}
    \end{subfigure}
   \\
    \begin{subfigure}[b]{0.3\textwidth}
       \includegraphics[height=2.5cm]{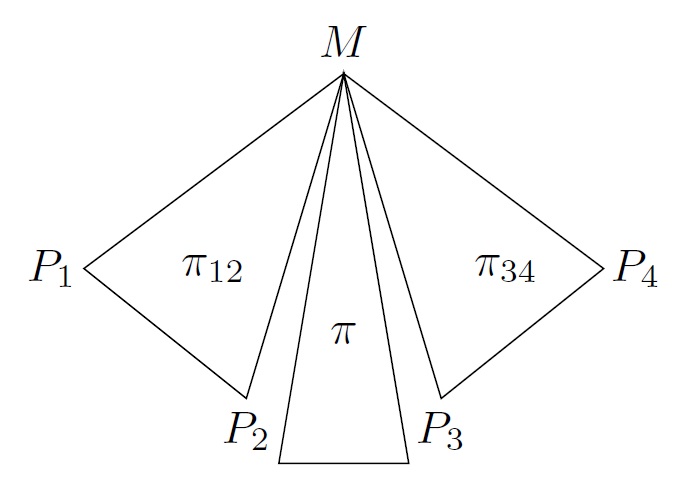}
      \caption{Case (A3).}\label{pi12-pi34-pi}
      \end{subfigure}
       \begin{subfigure}[b]{0.3\textwidth}
        \includegraphics[height=2.5cm]{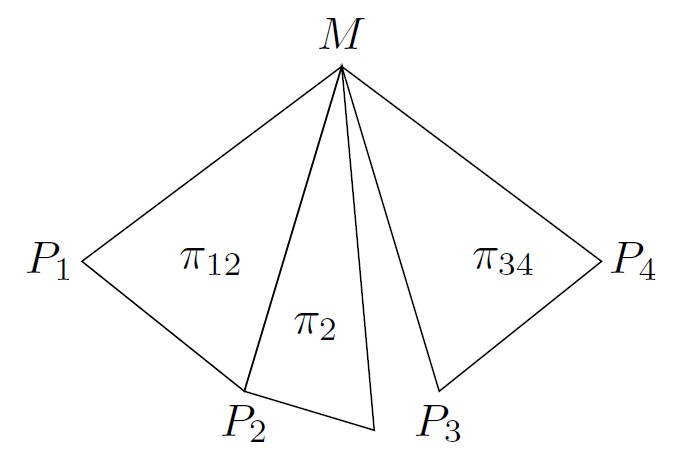}
    \caption{ Case (A4).}\label{pi12-pi2-pi34}
    \end{subfigure}
  \\
\begin{subfigure}[b]{0.3\textwidth}
    \includegraphics[height=3cm]{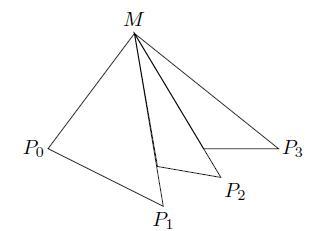}
    \caption{Case (B1)}
  \end{subfigure}
\end{figure}
\begin{itemize}
 \item (A1)
The equation of the cubic is
$(bx-ay)(dz-ct)^2=0.$
 \item (A2) The equation of the cubic is
$(bx-ay)(dz-ct)(c x-a z)=0.$
\item (A3) The equation of the cubic is
$(bx-ay)(dz-ct)(bx+ay+udz+uct)=0$ where  at any point
$(a,b,c,d)$ the function $u(a,b,c,d) $ verifies $ab+u(a,b,c,d)cd=0$.
\item (A4)  The equation of the cubic is
$(bx-ay)(dz-ct)(bdx+ady-2abt)=0$.
\item  (B1) The equation of the cubic is
$(cy-bz)(dz-ct)(dy-bt)=0.$
\end{itemize}
\end{proof}

If we want $I_3$ to be of dimension $r< 10$ (for instance $r=8$) we need $10-r+1$ independent  cubics
 with a triple point.
 So, to  get the failure of the WLP, we need
$10-r+1$ independent cubics with a triple point.
Let us recover with our method  two linear systems  of eight cubic forms (the complete classification is already done in  \cite[Theorem 4.10]{MMO})
 that fail the WLP in degree $2$.
\begin{prop}
 The following monomial ideals
\begin{itemize}
 \item $I= (x^3,y^3,z^3,t^3,  x^2y, xy^2, zt^2, z^2t),$
\item  $J= (x^3,y^3,z^3,t^3, xyz, xyt, xzt, yzt)$
\end{itemize}
fail the $\mathrm{WLP}$ in degree $2$.
\end{prop}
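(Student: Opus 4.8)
The plan is to apply Theorem \ref{th1bis} with $d=3$, $n=3$ and $r=8$. Failure of the WLP in degree $2$ is the case $k=1$ and $d+i-k=2$, i.e. $i=0$. The hypothesis of Theorem \ref{th1bis} holds because the eight generators are distinct monomials, hence linearly independent in $R_3$, so $\hh^0(K)=0$. Since $r_0=1$, $r_2=10$ and $r_3=20$, one computes
$$N(8,0,1,3)=8(r_0-r_{-1})-(r_3-r_2)=8-10=-2,$$
so that $N^+=0$ and $N^-=2$. By the equivalence of conditions $(1)$ and $(4)$ of Theorem \ref{th1bis} (and the identification $\mathrm{coker}(\mathrm{H}^0(\Phi_{I,L}))\simeq (I_3^{\perp})^*\cap \mathrm{H}^0(\cI_{L^{\vee}}^{3}(3))$ recorded at the end of its proof), since $N^-\neq 0$ the simple cone description cannot be invoked directly and it suffices instead to exhibit, for a general point $M=[L^{\vee}]\in\p^3$, at least $N^-+1=3$ linearly independent cubic \emph{cones} with vertex $M$ whose equations involve only the monomials \emph{outside} the ideal, i.e. lying in $(I_3^{\perp})^*\simeq R_3/I_3$ (resp. $(J_3^{\perp})^*$). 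Each such cone is a product of three planes through $M$, and in both cases I would produce these planes explicitly and then check membership and independence.

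For $I=(x^3,y^3,z^3,t^3,x^2y,xy^2,zt^2,z^2t)$ the key observation is that $I_3$ consists exactly of the cubic monomials pure in $\{x,y\}$ or pure in $\{z,t\}$, so $(I_3^{\perp})^*$ is spanned by the twelve monomials that mix the two groups, i.e. those divisible both by a variable of $\{x,y\}$ and by a variable of $\{z,t\}$. Writing $M=(a,b,c,d)$, the planes $bx-ay$ and $dz-ct$ pass through $M$, and for any linear form $\ell$ with $\ell(M)=0$ the cubic $(bx-ay)(dz-ct)\,\ell$ is a product of three planes through $M$, hence a cone with vertex $M$ of multiplicity $3$ there. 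As it is divisible by an $\{x,y\}$-form and a $\{z,t\}$-form, every monomial it produces mixes the two groups, so it lies in $(I_3^{\perp})^*$. As $\ell$ ranges over the three-dimensional space of linear forms vanishing at $M$, multiplication by the fixed quadric $(bx-ay)(dz-ct)$ is injective, yielding a $3$-dimensional subspace of $(I_3^{\perp})^*\cap \mathrm{H}^0(\cI_{M}^{3}(3))$; since $3>N^-$, the ideal $I$ fails the WLP in degree $2$.

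The delicate case is $J=(x^3,y^3,z^3,t^3,xyz,xyt,xzt,yzt)$, where $(J_3^{\perp})^*$ is spanned by the twelve monomials of type $u^2v$ and the cones must avoid not only the cubes but also the four squarefree monomials $xyz,xyt,xzt,yzt$. The \textbf{main obstacle} is that the two-groups trick used for $I$ no longer works, since a product of planes naturally produces terms like $xyz$. The way around it is the classical ``three concurrent lines'' cancellation already used at the start of Section \ref{s4}: for the triple $\{x,y,z\}$ the cubic $(bx-ay)(cy-bz)(az-cx)$ is a product of three planes through $M$ (meeting along the line $\overline{MP_3}$), and a direct expansion shows the coefficient of $xyz$ cancels, leaving only the six $u^2v$-monomials in $x,y,z$. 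Forming the analogous cone on three of the four triples, say $\{x,y,z\}$, $\{x,y,t\}$ and $\{x,z,t\}$, gives three cubics in $(J_3^{\perp})^*\cap \mathrm{H}^0(\cI_{M}^{3}(3))$, and their linear independence follows because $yz^2$ occurs (with coefficient $a^2b\neq 0$) only in the first and $yt^2$ only in the second, which then forces the third coefficient to vanish. Again $3>N^-=2$, so $J$ fails the WLP in degree $2$. Beyond the cancellation identity and this independence bookkeeping, the remaining points are routine: the dimension count $\dim(I_3^{\perp})^*=12$, $\hh^0(\cI_M^3(3))=10$ (cubic cones with vertex $M$), giving the expected intersection $12+10-20=2=N^-$, and the fact that the lower bound need only be verified for general $M$, as in the corollary to Theorem \ref{p1}.
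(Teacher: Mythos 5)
Your proposal is correct and follows essentially the same route as the paper: both arguments exhibit, for a general point $M$, three independent cubic cones (products of three concurrent planes through the fundamental points) supported on the monomials of $(I_3^{\perp})^*$, resp.\ $(J_3^{\perp})^*$; your family $(bx-ay)(dz-ct)\ell$ spans exactly the space generated by the paper's three explicit forms for $I$, and for $J$ you use the very same three triples $\{x,y,z\}$, $\{x,y,t\}$, $\{x,z,t\}$ with the Togliatti cancellation of the squarefree term. Your treatment is if anything slightly more complete, since you make the count $N^-=2$ and the linear independence of the three cones explicit, which the paper leaves implicit.
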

\begin{rem*}
The ideals $I$ and $J$  correspond respectively to the cases $(3)$ and $(1)$ in  \cite[Theorem 4.10]{MMO}.
\end{rem*}
\begin{proof}
Let us consider  the following three forms defining singular cubics passing through the fundamental points and a general point $(a,b,c,d)$:
$$ (ct-dz)(at-dx)(ay-bx)=0, (ct-dz)^2(ay-bx)=0, (ct-dz)(ay-bx)^2=0.$$
They are particular cases of type $(A)$ in the proof of Proposition \ref{th3-1}. They are linearly independent and can be written with twelve monomials.
Then it remains only $8$ forms for $I_3$:
$$ I=(x^3,y^3,z^3,t^3,  x^2y, xy^2, zt^2, z^2t).$$

Let us consider  the following three forms defining singular cubics passing through the basis points and the general point $(a,b,c,d)$:
$$ (bz-cy)(az-cx)(ay-bx)=0, (bx-ay)(at-dx)(dy-bt)=0, (az-cx)(dx-at)(dz-ct)=0.$$
They are   cases of type $(B1)$ in the proof of  Proposition \ref{th3-1}. They are linearly independent and can be written with twelve monomials:
$$ (x^2y, x^2z, xy^2, xz^2, y^2z, yz^2, t^2y, t^2z, ty^2, tz^2,t^2x, x^2t).$$ It remains only
$$J=(x^3,y^3,z^3,t^3, xyz, xyt, xzt, yzt).$$
\end{proof}
Of course the same argument (concurrent planes or hyperplanes) can be used in degree or dimension bigger than $3$. For instance
let us give a set of monomial ideals in $ \C[x,y,z,t]$,  generated in degree $4$ that fail the WLP.

\begin{prop}
\label{d4m}
Let $f_1, \ldots, f_{11}$ be eleven monomials chosen among
$$x^3y , x^3z, x^3t, xy^3, xz^3, xt^3, y^3z, y^3t, yz^3, yt^3, z^3t, zt^3, x^2y^2, z^2t^2, y^2z^2, x^2t^2.$$
Then the ideal $I=(x^4,y^4,z^4, t^4,f_1, \ldots, f_{11})$ fails the WLP in degree $3$.
\end{prop}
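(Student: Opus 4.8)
The plan is to apply Theorem \ref{th1bis} with $n=3$, $d=4$, $k=1$ and $i=0$, so that the relevant failure occurs in degree $d+i-k=3$. First I would record the numerology. The ideal has $r=15$ generators, all of which are \emph{distinct} degree-$4$ monomials; being linearly independent over $\C$, they admit no syzygy of degree $0$, so $\hh^0(K)=0$ and the hypothesis of Theorem \ref{th1bis} is satisfied. Next I would compute
$$N(15,0,1,4)=15(r_0-r_{-1})-(r_4-r_3)=15-(35-20)=0,$$
so $N^+=N^-=0$. We are therefore exactly in the situation of item \ref{cor_cones} of the Remarks after Theorem \ref{th1bis}: the WLP fails in degree $3$ if and only if, for a general point $M\in\p^3$, there is a nonzero quartic with multiplicity $4$ at $M$ whose equation lies in $(I_4^{\perp})^*$, i.e.\ uses only the degree-$4$ monomials that are \emph{not} generators of $I$.

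The core of the argument is to exhibit such a quartic uniformly, for every admissible choice of $f_1,\dots,f_{11}$. Writing $M=(a,b,c,d)$, I would take the four planes through $M$ cut out by the three pairs of opposite edges of the fundamental tetrahedron, and form the product
$$F=(dz-ct)\,(bx-ay)\,(cy-bz)\,(dx-at).$$
Each factor vanishes at $M$ and at the relevant fundamental points, so $F$ is a product of four planes through $M$; for general $M$ these linear forms are nonzero and pairwise non-proportional, whence $F$ has multiplicity exactly $4$ at $M$ and passes through $P_0,P_1,P_2,P_3$.

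The key step, and the only genuine computation, is to determine which monomials occur in $F$. I would observe that each variable appears in exactly two of the four linear factors ($x$ in the second and fourth, $y$ in the second and third, $z$ in the first and third, $t$ in the first and fourth), so no variable can occur to a power $\ge 3$; this at once removes the pure powers $x^4,y^4,z^4,t^4$ and all twelve $(3,1)$-type monomials. A brief expansion then shows that the only monomials with two squared variables that survive are the two ``diagonal'' ones $x^2z^2$ and $y^2t^2$, the rest being $xyzt$ together with the twelve $(2,1,1,0)$-type monomials. None of these twenty monomials lies in $\{x^4,y^4,z^4,t^4\}$ nor among the sixteen listed monomials, whose squared-pair members are precisely the four ``cyclic'' ones $x^2y^2,y^2z^2,z^2t^2,x^2t^2$. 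Hence $F\in(I_4^{\perp})^*$ regardless of which eleven of the sixteen are chosen as the $f_i$, and its image in $R_4/I_4$ is nonzero. This yields $\delta\ge 1$, and Theorem \ref{th1bis} gives the failure of the WLP in degree $3$.

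The main obstacle is exactly the choice made in the second paragraph: pairing \emph{adjacent} edges of the tetrahedron would introduce the cyclic monomials $x^2y^2,y^2z^2,z^2t^2,x^2t^2$, which may well be among the generators, so the construction would break for some choices of the $f_i$. The decisive point is the combinatorial observation that pairing \emph{opposite} edges leaves only the two diagonal squared-pair monomials, which are never on the list; once this is seen, the verification is immediate and uniform across all choices.
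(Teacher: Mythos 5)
Your proof is correct and is essentially the paper's own argument: the paper exhibits exactly the same quartic $(ct-dz)(at-dx)(ay-bx)(bz-cy)$ (your $F$ up to the signs of the factors) and leaves the numerology and monomial bookkeeping implicit, which you carry out explicitly and correctly. The only quibble is one of wording: your four planes pass through $M$ and the four edges $P_0P_1$, $P_1P_2$, $P_2P_3$, $P_3P_0$, i.e.\ they come from \emph{two} of the three pairs of opposite edges of the fundamental tetrahedron, not all three.
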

\begin{proof}
At any point $ M=(a,b,c,d)$ an equation of a surface of degree $4$ with multiplicity $4$ at $M$  that passes through the  points
$P_0, P_1, P_2,P_3$ is given by
$$ f(x,y,z,t)=(ct-dz)(at-dx)(ay-bx)(bz-cy)=0.$$
\end{proof}
We conclude this section with  an example that fails the SLP at the range  $2$.
\begin{prop}
\label{d4mslp}
 The ideal $I=(x^4,y^4,z^4, xy^3,xz^3,x^2yz,y^2z^2, y^3z, yz^3)\subset \C[x,y,z] $ fails the SLP at the range $2$ in degree $2$.
\end{prop}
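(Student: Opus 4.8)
The plan is to apply Theorem \ref{th1bis} with $n=2$, $d=4$, $r=9$, range $k=2$ and $i=0$, so that the target degree is $d+i-k=2$. First I would record the numerology: on $\p^2$ one has $r_0=1$, $r_2=6$, $r_4=15$ and $r_{-2}=0$, hence
$$N(9,0,2,4)=9(r_0-r_{-2})-(r_4-r_2)=9-9=0,$$
so that $N^{+}=N^{-}=0$. The syzygy hypothesis $\hh^0(K)=0$ is immediate, since the nine generators are distinct (hence linearly independent) monomials of degree $4$ and so admit no constant syzygy. Thus Theorem \ref{th1bis} applies, and because $N^{-}=0$ the reformulation in item \ref{cor_cones} of the Remarks shows that $I$ fails the SLP at range $2$ in degree $2$ if and only if at a general point $M\in\p^2$ there is a nonzero quartic with multiplicity $\ge d+i-k+1=3$ at $M$ whose equation uses only the monomials not lying in $I_4$; that is, a nonzero element of $(I_4^{\perp})^{*}\cong R_4/I_4$ lying in $\HH^0(\cI_M^3(4))$.

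Next I would make the complement explicit. Among the fifteen degree-four monomials, $I_4$ contains $x^4,y^4,z^4,xy^3,xz^3,x^2yz,y^2z^2,y^3z,yz^3$, so
$$R_4/I_4\cong\langle x^3y,\;x^3z,\;x^2y^2,\;x^2z^2,\;xy^2z,\;xyz^2\rangle.$$
The key observation is that every one of these six monomials is divisible by $x$, so this space equals $x\cdot\langle x^2y,x^2z,xy^2,xz^2,y^2z,yz^2\rangle$, and the six monomials in the bracket are precisely those appearing in the concurrent-lines cubic discussed at the start of this section.

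Finally I would exhibit the hypersurface. For $M=(a,b,c)$ set
$$C_M=(bz-cy)(az-cx)(ay-bx),$$
the product of the three lines joining $M$ to $P_0,P_1,P_2$; it vanishes to order three at $M$ and, as recalled above, is written with only the six monomials $x^2y,x^2z,xy^2,xz^2,y^2z,yz^2$. Multiplying by $x$ produces the quartic $xC_M$, which lies in $R_4/I_4$ by the displayed identification and, since $\cI_M^3$ is an ideal and $C_M\in\cI_M^3$, also lies in $\HH^0(\cI_M^3(4))$. For general $M$ the cubic $C_M$ is a genuine triple of distinct concurrent lines, so $xC_M\neq0$ and represents a nonzero class in $R_4/I_4$. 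This gives the required $\delta\ge1$, hence the failure of the SLP at range $2$ in degree $2$.

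The computation is routine; the one point that requires care—and the only real idea—is spotting that $R_4/I_4$ is exactly $x$ times the span of the six Togliatti monomials, which lets one promote the classical concurrent-lines cubic to the needed quartic. Once this is seen, the multiplicity condition comes for free from the ideal property of $\cI_M^3$, and the nonvanishing is merely a general-position statement.
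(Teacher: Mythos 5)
Your proof is correct and follows essentially the same route as the paper: both exhibit the quartic $x\cdot(bz-cy)(az-cx)(ay-bx)$ (the paper phrases it as the union of the three lines $(MP_i)$ with one fixed coordinate line), observe that it is triple at the general point $M$ and written only in the six monomials spanning $R_4/I_4$, and conclude via Theorem \ref{th1bis}. If anything, your version is slightly more careful --- you verify $N(9,0,2,4)=0$ and $\hh^0(K)=0$ explicitly, and your choice of the line $\{x=0\}$ is the one actually consistent with the stated monomial list, whereas the paper labels the fourth line $(P_0P_1)$.
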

\begin{proof}
Let   $P_0$, $P_1$, $P_2$  and $M(a,b,c)$ be four points. We consider the quartic curve consisting  of the union of
the four lines $(MP_0), (MP_1), (MP_2)$ and $(P_0P_1)$.
It is a quartic passing through $P_0,P_1, P_2$ and triple at $M(a,b,c)$. It depends on the six monomials
$ x^3y,x^3z,x^2y^2, xy^2z, x^2z^2, xyz^2.$
Then it remains $9=15-6$ monomials
 $$I_4=<x^4,y^4,z^4, xy^3,xz^3,x^2yz,y^2z^2, y^3z, yz^3>.$$ The associated syzygy bundle $K$
verifies $h^0(K\otimes \sO_{L^2})\neq 0$ for a general linear form $L$. It proves that
$I=(x^4,y^4,z^4, xy^3,xz^3,x^2yz,y^2z^2, y^3z, yz^3)$ fails the SLP at the range $2$ in degree $2$.
\end{proof}
\subsection{Non monomial examples coming from singular hypersurfaces}
Let us study now the interesting case   $I_d^{\perp}=\mathrm{H}^0(\cI_Z(d))^{*}$ where $Z$ is a finite set of distinct points  in $\p^{2\vee}$ of length $|Z|$ and
$\cI_Z$ its ideal sheaf.
The set $Z$ corresponds by projective duality to a set of $|Z|$ distinct  lines in $\p^2$ defined by linear forms $l_1,\ldots, l_{|Z|}$.
We will now consider the ideal $I\subset R$ generated by $(l_1^d, \ldots, l_{|Z|}^d)$.
We have $|Z|=\mathrm{dim}_{\C}I_d$.

\begin{prop}
\label{nmslp}
Let $k\ge 1$, $r=r_d-r_{d-k}$ and $Z=\{l_1^{\vee},\ldots, l_{r}^{\vee}\}$ a finite set of $r$ distinct points in $\p^{2\vee}$
where $l_i$ are linear forms on $\p^2$. Assume that there exists a subset  $Z_1\subset Z$,
of length $r-d+k-1$,
contained in a curve $\Gamma_{1}$ of degree $k-1$. Then
 the ideal $I=(l_1^d, \ldots, l_{r}^d)$ fails the SLP at the range $k$ in degree $d-k$.
\end{prop}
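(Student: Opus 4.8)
The plan is to specialize Theorem~\ref{th1bis} to $i=0$, so that the relevant degree $d+i-k$ becomes exactly $d-k$, and then to exhibit by hand the singular curve whose existence is equivalent to the failure of the SLP. First I would record the numerology. Taking $i=0$ on $\p^2$ we have $r_0=1$ and $r_{-k}=0$ for $k\ge 1$, so
\[
N(r,0,k,d)=r\,(r_0-r_{-k})-(r_d-r_{d-k})=r-(r_d-r_{d-k})=0
\]
by the hypothesis $r=r_d-r_{d-k}$; hence $N^+=N^-=0$. Moreover $r=r_d-r_{d-k}<r_d$, so Lemma~\ref{lem-syz} gives $\hh^0(K)=0$, i.e. there is no syzygy of degree $0$ among the $l_i^d$, and Theorem~\ref{th1bis} applies. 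By its item~\ref{item_iv_thm}, since $N^-=0$, the ideal $I$ fails the SLP at range $k$ in degree $d-k$ as soon as, for a general point $M=L^\vee\in\p^{2\vee}$, there is a nonzero form
\[
F\in (I_d^\perp)^*\cap\HH^0(\cI_{M}^{\,d-k+1}(d)),
\]
that is, a nonzero curve of degree $d$ in $\p^{2\vee}$ vanishing at the $r$ points of $Z=\{l_1^\vee,\dots,l_r^\vee\}$ and having multiplicity at least $d-k+1$ at $M$.

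The crux is the explicit construction of $F$, and the key idea is to split $Z$ according to the hypothesis. Write $Z_2=Z\setminus Z_1$; since $|Z_1|=r-d+k-1$ we have $|Z_2|=d-k+1$. Let $\Gamma_2$ be the product of the $d-k+1$ lines joining the general point $M$ to the points of $Z_2$. Then $\Gamma_2$ has degree $d-k+1$, it passes through every point of $Z_2$, and since all its linear factors pass through $M$ it has multiplicity exactly $d-k+1$ at $M$. Taking $\Gamma_1$ to be the curve of degree $k-1$ through $Z_1$ furnished by the hypothesis, I set $F=\Gamma_1\cdot\Gamma_2$.

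It then remains to verify the three required properties. We have $\deg F=(k-1)+(d-k+1)=d$; the form $F$ vanishes on $Z_1$ because $\Gamma_1$ does and on $Z_2$ because $\Gamma_2$ does, hence on all of $Z$; and, as $M$ is general we may assume $M\notin\Gamma_1$, so the multiplicity of $F$ at $M$ equals that of $\Gamma_2$, namely $d-k+1$. Since $\Gamma_1\neq 0$ by hypothesis and $\Gamma_2\neq 0$ as a product of genuine lines (each line $\overline{MP}$ is well defined because $M\notin Z$), the form $F$ is a nonzero element of $(I_d^\perp)^*\cap\HH^0(\cI_M^{\,d-k+1}(d))$, giving $\delta\ge 1$ in Theorem~\ref{th1bis}~\ref{item_iv_thm}, whence the failure of the SLP at range $k$ in degree $d-k$.

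The one point requiring care is the genericity of $M$, used both to guarantee $M\notin\Gamma_1$ (so that $\Gamma_1$ contributes no multiplicity at $M$) and to keep $M$ off the points of $Z_2$ (so that the linear factors of $\Gamma_2$ are genuine lines). Both are open conditions, hence harmless for the generic Lefschetz statement; everything else is the degree and multiplicity bookkeeping recorded above. I do not expect any serious obstacle once the decomposition $Z=Z_1\sqcup Z_2$ and the cone-of-lines factor $\Gamma_2$ are in hand, the numerical reduction $N^\pm=0$ and the vanishing $\hh^0(K)=0$ being entirely routine consequences of the hypotheses and of Lemma~\ref{lem-syz}.
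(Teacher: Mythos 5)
Your proof is correct and follows essentially the same route as the paper: the authors likewise take the union of $\Gamma_1$ with the $d-k+1$ lines joining a general point to $Z\setminus Z_1$ to produce a nonzero section of $\HH^0(\cI_Z\otimes\cI_M^{d-k+1}(d))$ and conclude via Theorem~\ref{th1bis}. You merely make explicit the bookkeeping ($N^{\pm}=0$ and $\hh^0(K)=0$) that the paper leaves implicit.
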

\begin{proof}
The union of  $\Gamma_{1}$ and  $(d-k+1)$ concurrent lines at a point $P$ passing through the
remaining points $Z\setminus Z_1$, is
a non zero section of
$\mathrm{H}^0(\cI_Z\otimes \cI_P^{d-k+1}(d))$.  By Theorem \ref{th1bis} it proves that $I$ fails the SLP at the range $k$ in degree $d-k$.
\end{proof}


With this method it is always possible to find systems  of any degree that fail the SLP by
exhibiting a curve of degree $d$ with multiplicity $d-k+1$ at a  general point $P$.
But one can find  some set of points  for which these special curves  do not split  as product of lines
(see  Proposition \ref{B3} in the next section).
\section{SLP at the range  $2$ and line arrangements on $\p^2$}
\label{s5}
A line arrangement  is a collection of distinct lines in the projective plane. Arrangement of lines or more generally arrangement of hyperplanes
  is a famous and classical topic that has been studied by many authors for a very long time (see \cite{Cartier} or \cite{OT} for a good introduction).

Let us denote by
$f=0$ the equation of the union of lines of the considered arrangement. Another classical object associated to the arrangement is the
vector bundle $\mathcal{D}_0$ defined as the kernel of the  jacobian map:
$$ \begin{CD}
    0 @>>> \mathcal{D}_0 @>>> \sO_{\p^2}^{3} @>(\partial f)>> \sO_{\p^2}(d-1).
   \end{CD}
$$
The  bundle $\mathcal{D}_0 $ is called \textit{derivation bundle} (sometimes  logarithmic bundle ) of the line arrangement  (see \cite{S}  and \cite{Sc} for an introduction to
derivation bundles).

One can think about the  lines of the arrangement in $\p^{2}$ as a set of distinct points $Z$  in $\p^{2\vee}$. Then we will
denote by  $\mathcal{D}_0(Z)$ the associated derivation bundle.

The arrangement of lines is said {\it free with exponents} $(a,b)$ when its derivation bundle splits on $\p^2$ as a sum of two line bundles, more precisely when
$$ \mathcal{D}_0(Z)=\sO_{\p^2}(-a)\oplus \sO_{\p^2}(-b).$$

The splitting of $\mathcal{D}_0(Z)$ over a line $l\subset \p^2$ is related to the existence of curves (with a given degree $a+1$) passing through $Z$ that are multiple (with multiplicity $a$) at $l^{\vee}\in \p^{2\vee}$.
More precisely,
\begin{lem}(\cite[Proposition 2.1]{V2})
\label{linksd} Let $Z\subset \p^{2\vee}$ be a set of $a+b+1$ distinct points with $1\le a\le b$  and $l$ be
a general line in $\p^{2}$. Then  the following conditions are equivalent:
\begin{enumerate}
 \item $\mathcal{D}_0(Z)\otimes
\sO_{l}=\sO_{l}(-a)\oplus \sO_{l}(-b)$.
 \item  $\mathrm{h}^0((\mathcal{J}_{Z}\otimes \mathcal{J}_{l^{\vee}}^{a})(a+1))\neq
0$ and $\mathrm{h}^0((\mathcal{J}_{Z}\otimes \mathcal{J}_{l^{\vee}}^{a-1})(a))=
0.$
\end{enumerate}
\end{lem}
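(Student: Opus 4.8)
The plan is to reduce both conditions to the (non)vanishing of the \emph{same} cohomology groups, by reading the splitting type of $\mathcal{D}_0(Z)\otimes\sO_l$ off the $\mathrm{H}^0$ of its twists, and then transporting the dual-plane cohomology onto the line $l$ through the pencil of lines centred at $l^\vee$.

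First I would record the elementary reduction of $(1)$. Since $l\cong\p^1$, Grothendieck's theorem gives $\mathcal{D}_0(Z)\otimes\sO_l=\sO_l(-a')\oplus\sO_l(-b')$ with $a'\le b'$ and $a'+b'=d-1=a+b$ (as $c_1(\mathcal{D}_0(Z))=-(d-1)$). The least twist carrying a section is $a'$, so, comparing with $1\le a\le b$, condition $(1)$ (i.e. $a'=a$, $b'=b$) holds if and only if $\mathrm{h}^0(\mathcal{D}_0(Z)\otimes\sO_l(a))\neq 0$ and $\mathrm{h}^0(\mathcal{D}_0(Z)\otimes\sO_l(a-1))=0$. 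Hence it suffices to prove, for a general $l$ and every integer $t$, the identification
$$\mathrm{H}^0\big(\mathcal{D}_0(Z)\otimes\sO_l(t)\big)\;\cong\;\mathrm{H}^0\big(\mathcal{J}_Z\otimes\mathcal{J}_{l^\vee}^{t}(t+1)\big);$$
applying it at $t=a$ and $t=a-1$ yields the equivalence $(1)\Leftrightarrow(2)$ at once. As a consistency check, each side is the space of sections of a rank-$2$ bundle on $\p^1$ of degree $2t-a-b$, so the identification is numerically plausible.

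The geometric heart is to realise the right-hand side on $l$. A point $p\in l$ corresponds to the line $\hat p\subset\p^{2\vee}$ of functionals vanishing at $p$, and $l^\vee\in\hat p$; thus $p\mapsto\hat p$ identifies $l$ with the pencil of lines through $l^\vee$. Blowing up $\sigma\colon S\to\p^{2\vee}$ at $l^\vee$ resolves this pencil into a $\p^1$-fibration $\pi\colon S\to l$, with exceptional curve $E$, fibre class $F=\sigma^*H-E$ and $H$ the hyperplane class of $\p^{2\vee}$. For general $l$ one has $l^\vee\notin Z$ and no two points of $Z$ collinear with $l^\vee$, so the $d$ points of $Z$ lie on $d$ distinct fibres, over the $d$ points $l\cap\{l_i=0\}$. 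Using $(t+1)\sigma^*H-tE=\sigma^*H+tF$ together with $\sigma^{-1}\mathcal{J}_{l^\vee}^{t}\cdot\sO_S=\sO_S(-tE)$, the curves of degree $t+1$ through $Z$ with multiplicity $\ge t$ at $l^\vee$ become $\mathrm{H}^0\big(S,\mathcal{J}_Z\otimes\sO_S(\sigma^*H+tF)\big)$. This bundle has degree $\sigma^*H\cdot F=1$ on every fibre, so $R^1\pi_*=0$ and $\pi_*$ yields a rank-$2$ bundle on $l$; the $d$ point conditions are $d$ elementary modifications at distinct reduced points, producing $\mathcal{E}'_t=\mathcal{E}'_0\otimes\sO_l(t)$ with $\mathrm{H}^0(l,\mathcal{E}'_t)=\mathrm{H}^0(\mathcal{J}_Z\otimes\mathcal{J}_{l^\vee}^{t}(t+1))$.

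It then remains to prove $\mathcal{E}'_0\cong\mathcal{D}_0(Z)\otimes\sO_l$, which I expect to be the main obstacle. The clean route is to view $S$ as the slice $p^{-1}(l)$ of the point–line incidence $\mathbb{F}\subset\p^2\times\p^{2\vee}$, with projections $p,q$, $\pi=p|_S$ and $\sigma=q|_S$. Then, by cohomology and base change, $\mathcal{E}'_0$ is the restriction to $l$ of the global rank-$2$ sheaf $\mathcal{V}:=p_*\,q^*(\mathcal{J}_Z(1))$ on $\p^2$, and the assertion becomes the incidence-theoretic (Fourier--Mukai) description of the derivation bundle, $\mathcal{V}\cong\mathcal{D}_0(Z)$. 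I would establish this by writing down the natural evaluation of logarithmic derivations along the fibres $\hat p$ as a morphism $\mathcal{D}_0(Z)\to\mathcal{V}$ and checking it is an isomorphism in codimension $\le 1$; since both are reflexive of rank $2$ with $c_1=-(d-1)$ and equal $c_2$, it is then an isomorphism everywhere. The two uses of genericity are exactly here: $l$ must avoid the jumping lines of $\mathcal{D}_0(Z)$ so that base change gives $\mathcal{V}|_l\cong\pi_*(\,\cdot\,|_S)$, and it must avoid the loci forcing coincidences among the fibres carrying $Z$; both are closed conditions, so a general $l$ works.
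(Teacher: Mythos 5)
The paper itself gives no proof of this lemma: it is imported verbatim from \cite[Proposition 2.1]{V2}, so there is no internal argument to compare you against. What you have written is essentially a reconstruction of the ``standard construction'' proof used in that source and in \cite{FMV} (also in the bibliography): detect the splitting type from $\mathrm{h}^0$ of two consecutive twists, then carry $\mathrm{H}^0(\mathcal{J}_Z\otimes\mathcal{J}_{l^\vee}^{t}(t+1))$ over to $l$ via the incidence variety, i.e.\ the blow-up of $\p^{2\vee}$ at $l^\vee$ fibred over the pencil $l$. The elementary reduction of (1), the identification of $p^{-1}(l)$ with that blow-up, the degree-one-on-fibres pushforward and the $d$ elementary modifications at distinct points are all correct. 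The one genuinely incomplete step is the one you flag yourself: $\mathcal{D}_0(Z)\cong p_*q^*(\mathcal{J}_Z(1))$. This is exactly the Fourier--Mukai description of the derivation bundle established in \cite{FMV}, so you could simply cite it; if you want to prove it, pushing forward $0\to\mathcal{J}_Z(1)\to\sO(1)\to\sO_Z(1)\to 0$ exhibits $p_*q^*(\mathcal{J}_Z(1))$ as the kernel of an evaluation $T_{\p^2}(-1)\to\bigoplus_i\sO_{l_i}$, hence as a second syzygy sheaf on a smooth surface, hence locally free of rank $2$ with $c_1=1-d=-(a+b)$; once your natural morphism from $\mathcal{D}_0(Z)$ is constructed and shown generically injective, its determinant is a nonzero constant because the two determinants agree, so it is an isomorphism everywhere --- the $c_2$ comparison you invoke is not needed (and is the least pleasant part of your sketch to make rigorous). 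One correction of detail: the genericity required for base change is that $l$ avoid the finitely many multiple points of the arrangement, which are exactly the support of $R^1p_*q^*(\mathcal{J}_Z(1))$; avoiding the jumping lines of $\mathcal{D}_0(Z)$ is not the relevant condition here. The same condition on $l$ is what puts the $d$ points of $\sigma^{-1}(Z)$ on distinct fibres, so a single open dense set of lines serves both purposes.
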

 In our context it implies  the following characterization of unstability. We recall that
a rank two vector bundle $E$  on $\p^n, n\ge 2$ is  \textit{unstable} if and only if
its splitting $E_l=\sO_l(a)\oplus \sO_l(b)$ on a general line  $l$  verifies $\mid a-b\mid \ge 2$. This characterization
is a consequence of the Grauert-M\"ulich theorem, see \cite{OSS}.

\begin{prop}
\label{th5}
 Let $I\subset R=\C[x,y,z]$ be an  artinian ideal generated by $2d+1$  polynomials $l_1^d, \ldots, l_{2d+1}^d$ where $l_i$ are distinct linear forms in $\p^2$.
Let  $Z=\{l_1^{\vee},\ldots, l_{2d+1}^{\vee}\}$ be the corresponding set of points in $\p^{2\vee}$. Then the following conditions are equivalent:
\begin{enumerate}
\item The ideal $I$ fails the $\mathrm{SLP}$ at the range  $2$ in degree $d-2$.
\item The derivation bundle $\mathcal{D}_0(Z)$   is unstable.
\end{enumerate}
\end{prop}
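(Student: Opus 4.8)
The plan is to specialize Theorem \ref{th1bis} to the numerical situation at hand and then to read the resulting cohomological condition through Lemma \ref{linksd}, which already encodes the splitting type of $\mathcal{D}_0(Z)$ on a general line.

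First I would fix the parameters. Failure of the SLP at the range $k=2$ in target degree $d-2=d+i-k$ forces $i=0$. With $r=2d+1$ generators and $n=2$ one computes $r_d-r_{d-2}=\binom{d+2}{2}-\binom{d}{2}=2d+1$, hence
$$N(2d+1,0,2,d)=(2d+1)(r_0-r_{-2})-(r_d-r_{d-2})=(2d+1)-(2d+1)=0,$$
so that $N^{+}=N^{-}=0$. The syzygy hypothesis of Theorem \ref{th1bis} is $\hh^0(K)=0$; since $2d+1<\binom{d+2}{2}$ for $d\ge 2$ (the only meaningful range, as $d-2\ge 0$), Lemma \ref{lem-syz} with $i=0$ guarantees $\hh^0(K)=0$. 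Thus the equivalence $(1)\Leftrightarrow(4)$ of Theorem \ref{th1bis} applies, and because $N^{-}=0$ it says exactly that $I$ fails the SLP at the range $2$ in degree $d-2$ if and only if, for a general linear form $l$, the space $(I_d^{\perp})^{*}\cap \mathrm{H}^0(\cI_{l^{\vee}}^{\,d-1}(d))$ is nonzero.

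Next I would identify the orthogonal space. For $I=(l_1^d,\dots,l_{2d+1}^d)$ the Remark following the projection map $\pi_{I_{d+i}}$ gives $(I_d^{\perp})^{*}=\mathrm{H}^0(\cI_Z(d))$, the degree $d$ forms vanishing on $Z=\{l_1^{\vee},\dots,l_{2d+1}^{\vee}\}$; intersecting with the forms of multiplicity $\ge d-1$ at $l^{\vee}$, the criterion becomes $\hh^0(\cI_Z\otimes\cI_{l^{\vee}}^{\,d-1}(d))\ne 0$ for general $l$, i.e. the existence of a degree $d$ curve through the $2d+1$ points of $Z$ with a point of multiplicity $d-1$ at $l^{\vee}$. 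The heart of the argument is then to match this with unstability. Write the generic splitting $\mathcal{D}_0(Z)\otimes\sO_l=\sO_l(-a)\oplus\sO_l(-b)$ with $a\le b$; since $|Z|=a+b+1$ we have $a+b=2d$, so by the Grauert--M\"ulich criterion recalled in the text $\mathcal{D}_0(Z)$ is unstable exactly when $b-a\ge 2$, that is (as $a+b=2d$ is even) when $a\le d-1$, the only non-unstable alternative being the balanced case $a=b=d$. If $a=d$, the vanishing in Lemma \ref{linksd}(2) reads $\hh^0(\cI_Z\otimes\cI_{l^{\vee}}^{\,d-1}(d))=0$, so the SLP does not fail. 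If instead $1\le a\le d-1$, then Lemma \ref{linksd} produces a nonzero section of $\cI_Z\otimes\cI_{l^{\vee}}^{\,a}(a+1)$, i.e. a degree $a+1$ curve through $Z$ that is $a$-fold at $l^{\vee}$; multiplying it by $d-1-a\ge 0$ general lines through $l^{\vee}$ yields a degree $d$ curve through $Z$ that is $(d-1)$-fold at $l^{\vee}$, whence $\hh^0(\cI_Z\otimes\cI_{l^{\vee}}^{\,d-1}(d))\ne 0$ and the SLP fails (the fully degenerate pencil case $a=0$, where $Z$ is collinear, being handled directly by the same cone construction applied to the line through $Z$). Chaining the three equivalences gives $(1)\Leftrightarrow(2)$.

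I expect the main obstacle to be the careful bookkeeping around the orthogonal complement and the multiplicity conventions: one must check that the kernel--cokernel dictionary of Theorem \ref{th1bis} really identifies the space $\mathrm{H}^0(\cI_Z\otimes\cI_{l^{\vee}}^{\,d-1}(d))$, and that the line-multiplication trick genuinely preserves passage through \emph{every} point of $Z$ while raising the multiplicity at $l^{\vee}$ by exactly one per line. The remaining inputs---that the generic splitting satisfies $a+b=2d$ and that unstability is governed by $|a-b|\ge 2$---are supplied by the conventions for $\mathcal{D}_0(Z)$ and by Grauert--M\"ulich, so the only genuinely geometric step is the monotonicity in $a$ furnished by Lemma \ref{linksd} together with the elementary cone construction.
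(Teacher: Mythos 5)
Your proof is correct and follows essentially the same route as the paper: specialize Theorem \ref{th1bis} (with $i=0$, $k=2$, $N=0$) to reduce the failure of the SLP to the nonvanishing of $\hh^0(\cI_Z\otimes\cI_{l^{\vee}}^{d-1}(d))$ at a general point, then translate this via Lemma \ref{linksd} into a non-balanced generic splitting of $\mathcal{D}_0(Z)$ and conclude by Grauert--M\"ulich. You simply make explicit several details the paper leaves implicit (the computation $N(2d+1,0,2,d)=0$, the syzygy hypothesis, and the line-multiplication step passing from splitting type $a\le d-1$ to a $(d-1)$-fold point on a degree $d$ curve).
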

\begin{proof}
The failure of the SLP at the range $2$ in degree $d-2$ is equivalent to the existence at a general point $l^{\vee}$ of a curve of degree $d$ with multiplicity
$d-1$ at $l^{\vee}$ belonging to $I_d^{\perp}=\mathrm{H}^0(\cI_Z(d))$. By the lemma \ref{linksd} it is equivalent to  the following splitting
 $$\mathcal{D}_0(Z)\otimes \sO_{l}=\sO_{l}(d-s)\oplus \sO_{l}(d+s) \,\, \mathrm{with}\,\, s>0,$$
on a general line  $l$.
In other words  the failure of the SLP is equivalent to have a non balanced  decomposition
and according to Grauert-M\"ulich theorem it is equivalent to unstability.
\end{proof}
Let us give now an ideal generated by non monomials quartic forms  that fails the SLP at the range $2$. It comes from a line arrangement, called B3 arrangement (see  \cite[Pages 13, 25 and 287]{OT}), such that
the associated derivation bundle  is unstable (in fact even decomposed). The existence of a  quartic curve with a general triple point is the key argument. But
contrary to the previous  examples, this quartic is irreducible and  consequently not obtainable by Proposition \ref{nmslp}.
\begin{prop}
\label{B3}
The ideal $$I=(x^4,y^4,z^4,(x+y)^4,(x-y)^4,(x+z)^4,(x-z)^4, (y+z)^4,(y-z)^4)\subset \C[x,y,z] $$
fails  the SLP at the range  $2$ and degree $2$.
\end{prop}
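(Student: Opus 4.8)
The statement to prove is that the ideal
$$I=(x^4,y^4,z^4,(x+y)^4,(x-y)^4,(x+z)^4,(x-z)^4,(y+z)^4,(y-z)^4)$$
fails the SLP at range $2$ in degree $2$. Here $d=4$ and we have $r=9=2d+1$ generating forms, each of degree $4$, arising as fourth powers of the nine linear forms of the B3 arrangement. The cleanest route is to invoke Proposition \ref{th5}: since the generators are $l_1^4,\ldots,l_9^4$ with $l_i$ distinct linear forms and $r=2d+1$ with $d=4$, the criterion applies verbatim. Thus it suffices to show that the derivation bundle $\mathcal D_0(Z)$ associated to the B3 point set $Z=\{l_1^\vee,\ldots,l_9^\vee\}\subset\p^{2\vee}$ is unstable.

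First I would verify the hypotheses of Proposition \ref{th5} and of Theorem \ref{th1bis}/Lemma \ref{lem-syz}, namely that there is no syzygy of the relevant degree; concretely $\mathrm h^0(K(i))=0$ for the degree $i$ in play, which by Lemma \ref{lem-syz} reduces to the numerical inequality $r\,r_i\le r_{d+i}$, easily checked for $d=4$, $r=9$ on $\p^2$. Having secured the applicability, the whole problem collapses to the geometry of the B3 arrangement. The key fact, advertised in the text preceding the statement, is that B3 is \emph{free} (indeed its derivation bundle splits), so $\mathcal D_0(Z)=\sO_{\p^2}(-a)\oplus\sO_{\p^2}(-b)$ for exponents $(a,b)$ with $a+b=2d=8$; for the well-known B3 Coxeter arrangement the exponents are $(3,5)$. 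Since $|a-b|=2\ge 2$, this is a non-balanced splitting, hence by the Grauert–M\"ulich characterization recalled just before the statement the bundle is unstable.

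Concretely, to exhibit the splitting type I would use Lemma \ref{linksd}: with $a+b+1=9$ points and exponents forcing $a=3$, the splitting $\mathcal D_0(Z)\otimes\sO_l=\sO_l(-3)\oplus\sO_l(-5)$ on a general line $l$ is equivalent to the existence of a quartic (degree $a+1=4$) through $Z$ with a triple point (multiplicity $a=3$) at $l^\vee$, together with the vanishing $\mathrm h^0((\mathcal J_Z\otimes\mathcal J_{l^\vee}^{2})(3))=0$. The first condition is precisely the statement that $I$ fails the SLP at range $2$ in degree $d-2=2$, via the identification $I_d^\perp=\mathrm H^0(\cI_Z(d))$ used in the proof sketch of Proposition \ref{th5}. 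So the decisive step is to produce, for a general point $P=l^\vee$, an \emph{irreducible} plane quartic passing through the nine points of $Z$ and having a triple point at $P$.

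The main obstacle, and the genuinely new content beyond the reductions above, is exhibiting this quartic explicitly and confirming its properties. Unlike the earlier monomial examples, where the required curve degenerates into a union of lines, here the text warns that the relevant quartic is \emph{irreducible} and hence is not produced by Proposition \ref{nmslp}. I would therefore write down the nine dual points of the B3 arrangement in $\p^{2\vee}$, set up the linear system of quartics through them with an imposed triple point at a general $P$, and count constants: a quartic has $15$ coefficients, passing through $9$ points imposes (generically) $9$ conditions and a triple point imposes $6$ more, so $15-9-6=0$ — the count is exactly balanced, and the content is that for the special B3 configuration the conditions fail to be independent, leaving at least a one-dimensional space of such quartics. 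Establishing this dependency (equivalently, $\delta\ge 1$ in the language of Theorem \ref{th1bis}) is the crux; I expect it to follow either from a direct determinantal computation exploiting the $S_4$ symmetry of B3, or from the known freeness of B3 with exponents $(3,5)$ quoted from \cite{OT}, which immediately guarantees the extra section. Once the quartic is in hand, irreducibility is checked by observing it cannot split off a line through $P$ meeting $Z$ in the required pattern, completing the proof.
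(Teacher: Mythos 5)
Your proposal is correct and follows essentially the same route as the paper: both rest on the single external fact that the B3 arrangement is free with exponents $(3,5)$ (cited from \cite{OT}), which gives the non-balanced splitting of $\mathcal{D}_0(Z)$ and hence, via Lemma \ref{linksd} and Theorem \ref{th1bis} (equivalently, via Proposition \ref{th5}), the required quartic with a triple point at a general point and the failure of the SLP at range $2$ in degree $2$. The only superfluous part of your plan is the final irreducibility check: irreducibility of the quartic is not needed for the conclusion (it is only remarked in the paper to explain why this example is not captured by Proposition \ref{nmslp}), and likewise the ``determinantal computation'' alternative is unnecessary once freeness is quoted.
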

\begin{proof}
Consider the set $Z$ of the nine dual points  of  the linear forms $x,y,z,x+y,x-y,x+z,x-z, y+z,y-z$. Let $I$ be the artinian ideal
$(x^4,y^4,z^4,(x+y)^4,(x-y)^4,(x+z)^4,(x-z)^4, (y+z)^4,(y-z)^4)$ and $K$ its syzygy bundle.
The  derivation bundle of the arrangement is $\mathcal{D}_0(Z)=\sO_{\p^2}(-3)\oplus \sO_{\p^2}(-5)$ (it is free  with exponents $(3,5)$; see \cite{OT} for a proof).
 Then, according to the Lemma \ref{linksd} there is at any point $P$ a degree $4$ curve with multiplicity $3$ at $P$ passing through $Z$.
In other words, by Theorem \ref{th1bis}, $I$ fails  the SLP at the range  $2$ and degree $2$.
 \end{proof}
  \begin{figure}[h!]
    \includegraphics[height=6.5cm]{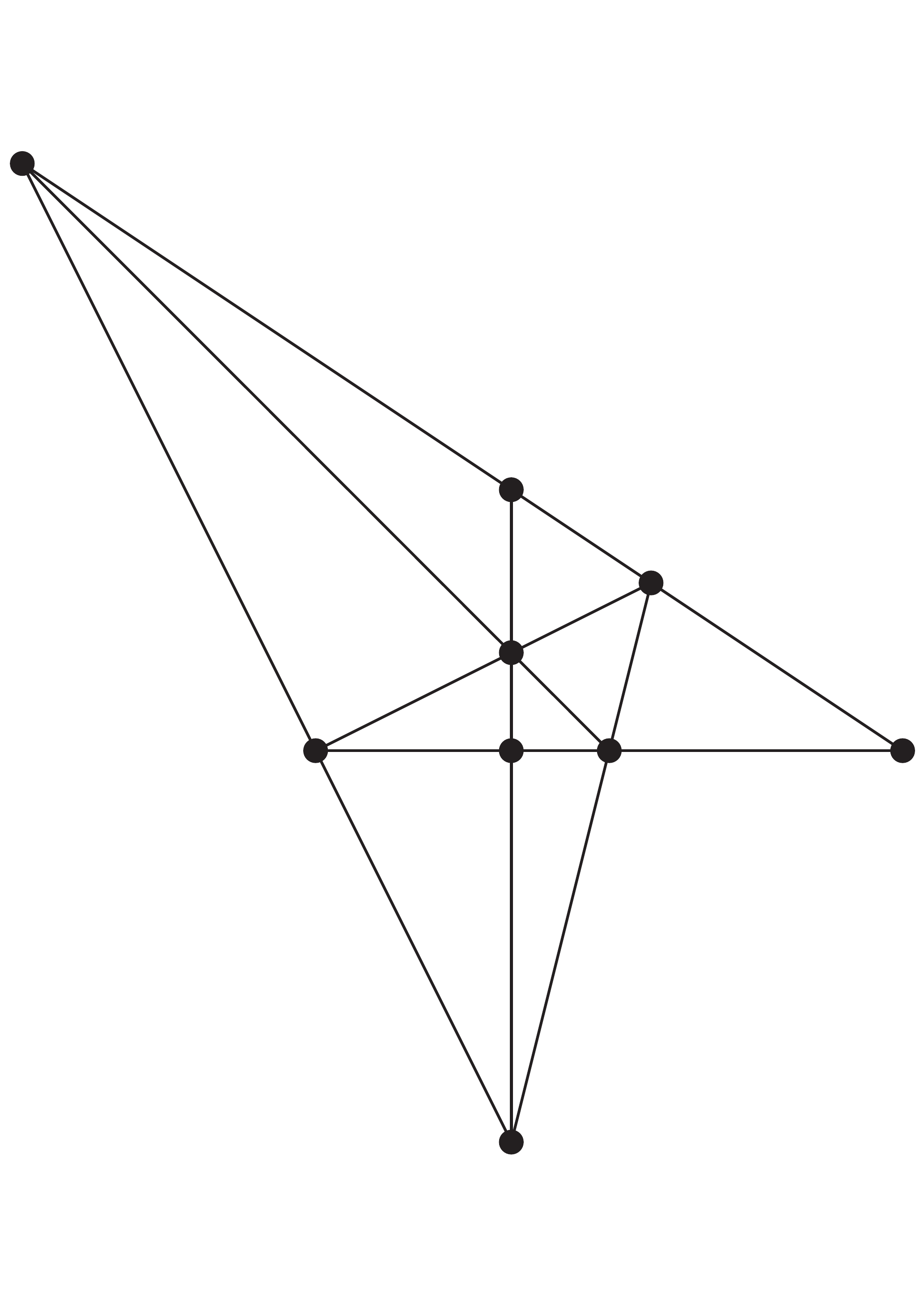}
    \caption{Dual set of points of the $B3$ arrangement}
  \end{figure}
More generally  non balanced  free arrangements lead to ideals that fail the SLP.
\begin{prop}
 Let $\mathcal{A}=\{ l_1, \ldots, l_{a+b+1}\}$ a  line arrangement that is free with exponents $(a,b)$ such that  $a\le b $,  $b-a\ge 2$ and $a+b$ even.
The ideal $I=( l_1^{\frac{a+b}{2}}, \ldots, l_{a+b+1}^{\frac{a+b}{2}})$
fails the SLP at the range $2$ and degree $\frac{a+b}{2}-1$.
\end{prop}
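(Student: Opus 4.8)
Set $d:=\frac{a+b}{2}$, so that $I=(l_1^{d},\ldots,l_{2d+1}^{d})$ is generated by $2d+1$ $d$-th powers on $\p^2$. The plan is to run Theorem \ref{th1bis} with range $k=2$ and $i=1$, for which the failure degree $d+i-k$ is exactly $d-1=\frac{a+b}{2}-1$, the value asserted. First I would assemble the numerics on $\p^2$: since $r_{d+1}-r_{d-1}=2d+3$ and $r_1=3$, one gets $N(2d+1,1,2,d)=3(2d+1)-(2d+3)=4d>0$, hence $N^{+}=4d$ and, decisively, $N^{-}=0$. I would also verify the hypothesis $\mathrm{h}^0(K(1))=0$ of Theorem \ref{th1bis}; by Lemma \ref{lem-syz} this amounts to $3(2d+1)\le r_{d+1}$, which holds for $d\ge 7$, the small cases being handled separately. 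With $N^{-}=0$, item (\ref{item_iv_thm}) of Theorem \ref{th1bis}, read through item \ref{cor_cones} of the Remarks following it, says that $I$ fails the SLP at range $2$ in degree $d-1$ if and only if, for the general point $M=L^{\vee}$, there is a nonzero form of degree $d+1$ in $(I_{d+1}^{\perp})^{*}$ having multiplicity at least $d$ at $M$.

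The second step is to make $(I_{d+1}^{\perp})^{*}$ explicit. The orthogonality $\delta(l_j^{d}\,m)=0$ for all $m\in R_1$ is equivalent, via apolarity, to $l_j(\partial)^{d}g=0$, i.e. to $g$ vanishing to order $\ge 2$ at $l_j^{\vee}$; this computation uses nothing about the position of the points, so it applies verbatim to our (non-general) set $Z=\{l_1^{\vee},\ldots,l_{2d+1}^{\vee}\}$ and identifies $(I_{d+1}^{\perp})^{*}=\mathrm{H}^0(\cI_Z^{(2)}(d+1))$, the degree-$(d+1)$ curves that are double along $Z$. Thus the whole assertion reduces to one geometric existence statement: for our free arrangement with $b-a\ge 2$ and for the general point $M\in\p^{2\vee}$, there is a curve of degree $d+1$, of multiplicity $\ge d$ at $M$, and singular (multiplicity $\ge 2$) at each of the $2d+1$ points of $Z$.

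To produce this curve I would lean on freeness, which is strictly stronger than the instability invoked in Proposition \ref{th5} and Proposition \ref{B3}. Because $\mathcal{D}_0(Z)=\sO_{\p^2}(-a)\oplus\sO_{\p^2}(-b)$ splits globally it has no jumping lines, and the correspondence underlying Lemma \ref{linksd} yields through $Z$ both the minimal curve $C_a$ of degree $a+1$ with an $a$-fold point at the general $M$ and, from the $\sO_{\p^2}(-b)$-factor, a companion curve $C_b$ of degree $b+1$ with a $b$-fold point at $M$. The double-along-$Z$ system $\mathrm{H}^0(\cI_Z^{(2)}(d+1))$ has dimension $r_{d+1}-\dim I_{d+1}=\tfrac{d(d-7)}{2}$ when there is no linear syzygy, and the plan is to show that this linear system, carried by the global splitting, necessarily contains a member with a $d$-fold point at the general $M$. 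The main obstacle is precisely here: the $d$-fold point at $M$ a priori imposes $\binom{d+1}{2}$ conditions, exceeding $\dim\mathrm{H}^0(\cI_Z^{(2)}(d+1))$ by $4d$, so a naive count is adverse and the product $C_a\cdot(\text{$d-a$ lines through }M)$ raises the multiplicity at $M$ correctly but cannot pass through all $2d+1$ points of $Z$; the $4d$ missing conditions must be shown to be absorbed by the special geometry of the free arrangement, routing the extra double vanishing along $Z$ through the $\sO_{\p^2}(-b)$-section rather than through $C_a$. Once a single such form is exhibited one obtains $\delta\ge 1$ in Theorem \ref{th1bis}, and the failure of the SLP at range $2$ in degree $\frac{a+b}{2}-1$ follows.
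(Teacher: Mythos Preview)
Your argument has a genuine gap: you never produce the curve. You correctly set up Theorem~\ref{th1bis} with $i=1$, $k=2$, correctly identify $(I_{d+1}^{\perp})^{*}$ with the system of degree-$(d+1)$ curves double along $Z$, and correctly observe that the na\"ive count is against you by $4d$ conditions; but then the proof stops. Neither $C_a$ nor $C_b$ is double along $Z$, their product has degree $2d+2$ rather than $d+1$, and the sentence about ``routing the extra double vanishing through the $\sO_{\p^2}(-b)$-section'' is a hope, not a construction. A secondary issue: Lemma~\ref{lem-syz} is proved under the assumption that the linear forms are \emph{general}, so it does not certify $\mathrm{h}^0(K(1))=0$ for a specific free arrangement, and ``the small cases being handled separately'' is left unhandled.

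The paper's proof takes a different, much shorter route: it simply applies Proposition~\ref{th5}. Freeness with exponents $(a,b)$ means $\mathcal{D}_0(Z)=\sO_{\p^2}(-a)\oplus\sO_{\p^2}(-b)$, so the generic splitting is $\sO_l(-a)\oplus\sO_l(-b)$ with $b-a\ge 2$, hence $\mathcal{D}_0(Z)$ is unstable, and Proposition~\ref{th5} immediately gives the failure of SLP at range $2$. In the language of Theorem~\ref{th1bis} this corresponds to $i=0$, not $i=1$: one needs a curve of degree $d$ through $Z$ (with ordinary points, not double points) having a $(d-1)$-fold point at a general $M$, and this curve is easy to write down---take the degree-$(a+1)$ curve $C_a$ supplied by Lemma~\ref{linksd} and adjoin $\frac{b-a-2}{2}$ lines through $M$. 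Note that Proposition~\ref{th5} literally yields failure in degree $d-2=\frac{a+b}{2}-2$; the ``$\frac{a+b}{2}-1$'' in the statement appears to be a misprint, and it is precisely this misprint that pushed you to $i=1$ and into the hard, unfinished version of the problem. Working with $i=0$ also avoids the syzygy issue, since $\mathrm{h}^0(K)=0$ is just the linear independence of the $l_j^{d}$.
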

\begin{rem*}
 If $a+b$ is odd we can  add to $Z$ one point $P$ in general  position with respect to $Z$ and we can prove in the same way that
 $I=( l_1^{\frac{a+b+1}{2}}, \ldots, l_{a+b+1}^{\frac{a+b+1}{2}}, (P^{\vee})^{\frac{a+b+1}{2}})$
fails the SLP at the range $2$ and degree  $\frac{a+b}{2}$.
\end{rem*}
\begin{proof}
Let us denote by
$Z=\{ l_1^{\vee}, \ldots, l_{a+b+1}^{\vee}\}$ the dual set of points of $\mathcal{A}$.
Since there exists at any general point $l^{\vee}$ a curve of degree $a+1$ passing through
${Z}$, the Lemma \ref{linksd} implies that $\mathcal{D}_0(Z)$ is unstable and Proposition \ref{th5} implies that $I$
fails the SLP at the range $2$ and degree $\frac{a+b}{2}-1$.
\end{proof}
\subsection{SLP at the range $2$ and Terao's conjecture}
One of the main conjecture about hyperplane arrangements
(still open also for line arrangements) is  Terao's conjecture.
It concerns the  free arrangements.
The conjecture says that freeness depends only on the combinatorics of the arrangement. Let us recall that the combinatorics of the arrangement $\mathcal{A}=\{l_1,\ldots, l_n\}$
is determined by an incidence graph. Its vertices are
  the lines $l_k$ and the  points  $P_{i,j}=l_i\cap l_j$. Its edges are joining $l_k$ to $P_{i,j}$ when $P_{i,j}\in l_k$.
We refer again to \cite{OT} for a good introduction to the subject.  Terao's conjecture is valid not only for line arrangement but more generally for hyperplane arrangements.
\begin{conj*}[Terao]
 The freeness of a hyperplane arrangement depends only on its  combinatorics.
\end{conj*}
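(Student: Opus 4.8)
The final statement is Terao's conjecture, one of the central open problems on arrangements, so what follows is not a complete argument but a plan for recasting it through the SLP dictionary developed above and for pinpointing where the genuine difficulty lies. The starting point is Proposition \ref{th5}: for a line arrangement with dual point set $Z\subset\p^{2\vee}$, the splitting type of the derivation bundle $\mathcal{D}_0(Z)$ on a general line --- hence its (in)stability --- is detected by the SLP behaviour at range $2$ of the artinian ideal generated by the corresponding powers of linear forms. Since freeness with exponents $(a,b)$ is precisely the condition $\mathcal{D}_0(Z)=\sO_{\p^2}(-a)\oplus\sO_{\p^2}(-b)$, the first step is to translate ``freeness with prescribed exponents'' into a finite list of SLP statements for the ideals $(l_1^t,\ldots,l_n^t)$ as the exponent $t$ varies, reading the relevant jumps of $\mathcal{D}_0(Z)\otimes\sO_l$ off the groups $\mathrm{H}^0(\cI_Z\otimes\cI_{l^\vee}^{j}(j+1))$ via Lemma \ref{linksd}.

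Next I would dispose of the exponents themselves. \emph{Once} an arrangement is known to be free, its exponents are already combinatorial: their sum equals $n-1$, where $n$ is the number of lines, and by Terao's factorization theorem the pair $(a,b)$ is recovered from the characteristic polynomial, which is an invariant of the incidence lattice. Consequently the whole content of the conjecture is concentrated in the \emph{existence} of the splitting, that is, in whether the point set $Z$ forces the SLP-failure pattern characteristic of a free arrangement. The plan is therefore to encode freeness as the prescribed vanishing and nonvanishing of the groups $\mathrm{H}^0(\cI_Z\otimes\cI_{l^\vee}^{j}(j+1))$ for a general line $l$ --- equivalently, as prescribed ranks of the multiplication maps $\mathrm{H}^0(\Phi_{I,L^k})$ of Theorem \ref{p1} --- and then to ask whether each of these conditions is constant across all point sets sharing the incidence data of $\mathcal{A}$.

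The main obstacle is exactly this last passage, and it is the reason the conjecture remains open. The numbers $\mathrm{h}^0(\cI_Z\otimes\cI_{l^\vee}^{j}(j+1))$ are only \emph{semicontinuous} in the positions of the points of $Z$, so they may jump along proper closed subsets of the family of realizations of a fixed combinatorics, subsets cut out by higher coincidences among the intersection data. Two arrangements with isomorphic incidence lattices can have dual point sets lying on different such strata, and nothing at the level of the syzygy or derivation bundle used here prevents $\mathcal{D}_0(Z)$ from splitting while the bundle of a combinatorially identical arrangement does not. Proving the conjecture thus amounts to showing that the cohomological jump locus responsible for the extra syzygy is itself governed by the incidence lattice rather than by the fine geometry of $Z$; controlling this jump locus is the hard part, and the reformulation proposed here reorganizes the problem cleanly but does not, on its own, resolve it.
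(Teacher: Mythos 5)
You have correctly recognized that the statement is Terao's conjecture, which is open, and the paper offers no proof of it either: its final subsection only reformulates the conjecture in SLP language, which is essentially the programme you outline. Your dictionary --- freeness read off the splitting of $\mathcal{D}_0(Z)$ on a general line via Lemma \ref{linksd}, hence off the SLP behaviour at range $2$ via Proposition \ref{th5}, with the exponents themselves combinatorially determined once freeness is known --- matches the paper's, and your diagnosis that the entire difficulty sits in the semicontinuity of $\mathrm{h}^0(\cI_Z\otimes\cI_{l^\vee}^{j}(j+1))$ across realizations of a fixed incidence lattice is exactly where the paper also leaves the matter. The one concrete step the paper carries out that your outline does not is the padding construction: starting from a free arrangement $\mathcal{A}_0$ with exponents $(a,b)$ and a hypothetical non-free arrangement $\mathcal{A}$ with the same combinatorics, it adjoins $b-a$ points in general position to both dual point sets, so that the derivation bundle of the padded free model becomes balanced (semistable) on a general line while that of the padded non-free one becomes unstable; this converts the comparison into a single precise equivalent statement, namely that $(l_1^b,\ldots,l_{2b+1}^b)$ has the SLP at range $2$ in degree $b-2$ whenever the combinatorially identical free model does. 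Neither your plan nor the paper's reformulation proves the conjecture, and you are right to say so explicitly rather than present the reorganization as a resolution.
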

In other words an arrangement with the same combinatorics of a free arrangement is free, too.

 Let us consider a free arrangement $\mathcal{A}_0=\{h_1,\ldots, h_n\}$ with exponents $(a,b)$ ($a\le b$)  and let us denote by $Z_0$ its dual set of points.
 We assume that Terao's conjecture is not true i.e,  that there exists
a non free arrangement $\mathcal{A}=\{l_1,\ldots, l_n\}$ with the same combinatorics  of $\mathcal{A}_0$.
 Let us add $b-a$ points $\{M_1,\ldots, M_{b-a}\}$ in general position to $Z_0$
  in order to form $\Gamma_0$ and to th dual set $Z$ of$\mathcal{A}$ to form $\Gamma$.
Then the length of both sets of points is $2b+1$. On the general line $l$ we have
$$\mathcal{D}_0(Z_0)\otimes \sO_l=\sO_l(-a)\oplus \sO_l(-b),$$
when, since $Z$ is not free,  we have a less balanced decomposition for $\mathcal{D}_0(Z)$ (this affirmation is  proved  in \cite{EF}):
$$\mathcal{D}_0(Z)\otimes \sO_l=\sO_l(s-a)\oplus \sO_l(-s-b),\,\, s\ge 1.$$
It implies that
$$\mathrm{h}^0(\cI_Z\otimes \cI_{l^{\vee}}^{a-1}(a))\neq 0, \mathrm{h}^0(\cI_{Z_0}\otimes \cI_{l^{\vee}}^{a-1}(a))= 0\,\,
\mathrm{and}\,\,\mathrm{h}^0(\cI_{Z_0}\otimes \cI_{l^{\vee}}^{a}(a+1))\neq 0.$$
Then adding $b-a$ lines passing through $l^{\vee}$ and the $b-a$ added points we obtain
 $\mathrm{h}^0(\cI_{\Gamma}\otimes \cI_{l^{\vee}}^{b-1}(b))\neq 0$, $\mathrm{h}^0(\cI_{Z_0}\otimes \cI_{l^{\vee}}^{b-1}(b))= 0$ and
$\mathrm{h}^0(\cI_{Z_0}\otimes \cI_{l^{\vee}}^{b}(b+1))\neq 0.$
The bundle
$\mathcal{D}_0(\Gamma_0)$ is balanced with splitting
$\sO_l(-b)\oplus \sO_l(-b)$ and  $$\mathcal{D}_0(\Gamma)\otimes \sO_l=\sO_l(1-b)\oplus \sO_l(-1-b).$$
Then  $\mathcal{D}_0(\Gamma_0)$ is semistable and $\mathcal{D}_0(\Gamma)$ is unstable.
In other words  the ideal
$$ ( l_1^b, \ldots, l_{a+b+1}^b, (M_1^{\vee})^b, \ldots, (M_{b-a}^{\vee})^b)$$ fails the SLP at the range $2$ and degree $b-2$ when
$$ ( d_1^b, \ldots, d_{a+b+1}^b, (M_1^{\vee})^b, \ldots, (M_{b-a}^{\vee})^b)$$
has the SLP at the range $2$ and degree $b-2$.

\noindent The following conjecture written in terms of SLP  is equivalent to  Terao's conjecture on $\p^2$.
\begin{conj*}
Let $Z_0=\{d_1^{\vee}, \ldots, d_{2b+1}^{\vee}\}$ a set of points of length $2b+1$ in $\p^{2\vee}$ such that the ideal
$ I= (d_1^b, \ldots, d_{2b+1}^b)$ has the SLP at the range $2$ and degree $b-2$. Assume that $Z=\{l_1^{\vee}, \ldots, l_{2b+1}^{\vee}\}$ has the same combinatorics of $Z_0$. Then
$ J= (l_1^b, \ldots, l_{2b+1}^b)$ has the SLP at the range $2$ and degree $b-2$.
\end{conj*}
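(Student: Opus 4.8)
The plan is to recast the statement purely in terms of the derivation bundle, using Proposition \ref{th5}, and then to argue by semicontinuity of the splitting type inside the realization space of the fixed combinatorial type. By Proposition \ref{th5}, the ideal $I=(d_1^b,\ldots,d_{2b+1}^b)$ has the SLP at the range $2$ in degree $b-2$ precisely when $\mathcal{D}_0(Z_0)$ is \emph{not} unstable, that is, when its generic splitting on a line is the balanced one $\sO_l(-b)\oplus\sO_l(-b)$; the same holds for $J$ and $\mathcal{D}_0(Z)$. Since $c_1(\mathcal{D}_0)=-2b$ is even, for these rank two bundles semistable and balanced coincide, so the conjecture is equivalent to the assertion that the balancedness of the derivation bundle of a $(2b+1)$-line arrangement on $\p^2$ is determined by the combinatorics alone.

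First I would fix the combinatorial type of $Z_0$ and introduce its realization space $\mathcal{R}$, the locally closed variety parametrizing all arrangements $\{l_1,\ldots,l_{2b+1}\}$ whose incidence graph coincides with that of $Z_0$. Over $\mathcal{R}$ there is a flat family of derivation bundles, and by Lemma \ref{linksd} together with the Grauert-M\"ulich theorem (see \cite{OSS}) the quantity detecting non-balancedness is $\mathrm{h}^0(\cI_Z\otimes\cI_{l^{\vee}}^{b-1}(b))$ for a general line $l$: this is nonzero exactly when $\mathcal{D}_0(Z)$ is unstable. As $\mathrm{h}^0$ is upper semicontinuous on $\mathcal{R}$, the balanced locus $\mathcal{R}^{b}\subset\mathcal{R}$ (where this $\mathrm{h}^0$ vanishes) is Zariski open, and by hypothesis it is nonempty, since $Z_0\in\mathcal{R}^{b}$. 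Hence a general arrangement of the given combinatorial type is balanced and, by Proposition \ref{th5}, its ideal has the SLP at the range $2$ in degree $b-2$. The remaining task is to upgrade this from the general member to the prescribed, possibly special, arrangement $Z$, i.e. to prove $\mathcal{R}^{b}=\mathcal{R}$.

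Here I would split according to whether the combinatorial type is that of a free arrangement. If the type is free, its exponents $(a,b')$ are read off the (combinatorial) characteristic polynomial; assuming Terao's conjecture, every realization, and in particular $Z$, is free with these same exponents, so $\mathcal{D}_0(Z)=\sO_{\p^2}(-a)\oplus\sO_{\p^2}(-b')$ splits on every line with the same type as $\mathcal{D}_0(Z_0)$, and balancedness is automatically shared. The delicate point, which I expect to be the genuine obstacle, is precisely the inclusion $\mathcal{R}\subseteq\mathcal{R}^{b}$ in the remaining situations. By the result of \cite{EF} a non-free realization of a free combinatorial type is \emph{strictly less} balanced than its free model; thus if the free model is the balanced one $\sO_{\p^2}(-b)\oplus\sO_{\p^2}(-b)$ while $Z$ is a non-free specialization, then $\mathcal{D}_0(Z)$ becomes unstable although $\mathcal{D}_0(Z_0)$ stays balanced, which is exactly the configuration produced by the construction preceding this conjecture. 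Excluding such specializations amounts to forbidding non-free realizations of free combinatorial types, i.e. to Terao's conjecture itself on $\p^2$; the still more rigid case in which $Z_0$ is balanced and stable but lies in no free combinatorial type is not even covered by \cite{EF}, and I see no purely combinatorial mechanism forcing $\mathcal{R}^{b}$ to exhaust the whole stratum $\mathcal{R}$. I therefore expect the argument to close only modulo Terao's conjecture, the missing ingredient being exactly the combinatorial constancy of the splitting type along $\mathcal{R}$.
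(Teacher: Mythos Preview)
The statement you are trying to prove is presented in the paper as a \emph{conjecture}, not a theorem; there is no proof in the paper to compare against. What the paper does is argue that this SLP conjecture is (at least) as strong as Terao's conjecture on $\p^2$: starting from a hypothetical counterexample to Terao --- a free arrangement $\mathcal{A}_0$ with exponents $(a,b)$ and a non-free arrangement $\mathcal{A}$ with the same combinatorics --- one adds $b-a$ general points to both dual configurations to obtain $\Gamma_0$ and $\Gamma$ of length $2b+1$; then $\mathcal{D}_0(\Gamma_0)$ is balanced while, by \cite{EF}, $\mathcal{D}_0(\Gamma)$ is strictly less balanced, hence unstable. Via Proposition~\ref{th5} this produces two arrangements with identical combinatorics, one whose ideal has the SLP at range $2$ in degree $b-2$ and one whose ideal fails it. So the paper only exhibits the implication ``SLP conjecture $\Rightarrow$ Terao'' by contrapositive.

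Your proposal is not a proof but an honest dissection of why one cannot expect a proof. The semicontinuity argument is correct and shows that the balanced locus $\mathcal{R}^b$ is open and nonempty in the realization space $\mathcal{R}$, so the conclusion holds for the \emph{general} arrangement with the combinatorics of $Z_0$; this is already more than the paper states. Your attempt to close the gap $\mathcal{R}^b=\mathcal{R}$ by invoking Terao, however, only treats the case where the combinatorial type is itself free, and you correctly flag that for a balanced but non-free $Z_0$ there is no available mechanism --- not Terao, not \cite{EF} --- that forces every realization to remain balanced. That observation is pertinent: the paper asserts \emph{equivalence} with Terao's conjecture, but only one implication is actually argued in the text, and your analysis suggests the reverse implication (Terao $\Rightarrow$ SLP conjecture) is not obviously available by these methods when the combinatorial type is not free. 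In short, you have not found a proof because none is expected; what you have found is a potential asymmetry in the claimed equivalence.
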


\address{
   Roberta Di Gennaro and Giovanna Ilardi\\
   Universit\`a di Napoli ``Federico II", Dipartimento di Matematica e Applicazioni \lq\lq R. Caccioppoli\rq\rq, Complesso Universitario Monte Sant'Angelo, Ed. 5, Via Cinthia 80126 Napoli,\\
   Italy\\
   \email{digennar@unina.it \\
   giovanna.ilardi@unina.it}}
   \\
\address{
 Jean Vall\`es\\
Universit\'e de Pau et des Pays de l'Adour, Laboratoire de Math\'ematique- B\^at. IPRA, Avenue de l'Universit\'e, 64000 Pau \\France
\email{jean.valles@univ-pau.fr}}


\begin{thebibliography}{A-Wykno}


\bibitem{A}  {\bibname Valdimir I. Arnold}, The cohomology ring of colored braid group.  {\it Mat. Zametki}, 5(2): 227--231, 1969.
    \bibitem{BCGI} {\bibname Alessandra Bernardi, Maria Virginia Catalisano, Alessandro Gimigliano and  Monica Id\'a}, Osculating varieties of Veronese varieties and their higher secant varieties.
 {\it Canad. J. Math.} 59, no. 3, 488--502  (2007).
\bibitem{BK} {\bibname Holger Brenner and Almar Kaid},  Syzygy bundles on $\p^2$ and the Weak Lefschetz Property. {\it  Illinois J. Math.}, 51:1299--1308, 2007.
\bibitem{Cartier} {\bibname Pierre Cartier}, Les arrangements d'hyperplans: un chapitre de g\'eom\'etrie combinatoire.
Bourbaki Seminar, Vol. 1980/81, p. 1--22, Lecture Notes in Math., 901, Springer, Berlin-New York, 1981.
\bibitem{DDI}  {\bibname Pietro De Poi, Roberta Di Gennaro and Giovanna Ilardi}, On varieties with higher osculating defect, {\it arXiv:1204.4399}. To appear on Revista Matem\'{a}tica Iberoamericana, 29 (4) 2013.
\bibitem{D} {\bibname Igor Dolgachev}, {\it Classical Algebraic Geometry: a modern view}. To be published by Cambridge Univ. press.
\bibitem{EF}  {\bibname Georges Elencwajg and Otto Forster}, Bounding cohomology groups of vector bundles on {${\p}_{n}$}, {\it Math. Ann.}, 246(3):251--270, 1979/80.
\bibitem{EI} {\bibname Jacques  Emsalem and Anthony Iarrobino}, Inverse System of a Symbolic Power, I, {\it J. of Algebra} 174:1080--1090, 1995.
\bibitem{FI} {\bibname Davide Franco and Giovanna Ilardi},  On a theorem of Togliatti. {\it  Int. Math. J.} 2(4):379--397, 2002.
\bibitem{FMV} {\bibname Daniele Faenzi, Daniel Matei and Jean Vall\`es},  Hyperplane arrangements of Torelli type.  {\it arXiv:1011.4611}, November 2010. To appear on Compositio Math.
\bibitem{H} {\bibname Joe Harris},  {\it Algebraic geometry, a first course}.  volume 133 of {\it Graduate Texts in Math.} Springer Verlag, 1992.
\bibitem{HMNW} {\bibname Tadahito Harima, Juan C. Migliore, Uwe Nagel and Junzo Watanabe}, The weak and Strong Lefschetz Properties for artinian K-Algebras.  {\it J. Algebra}, 262:99--126, 2003.
\bibitem{HSS} {\bibname Brian Harbourne, Henry K. Schenk and Alexandra Seleleanu}, Inverse systems, Gelfand-Tsetlin patterns and the weak Lefschetz property.
{\it Journal of the London Mathematical Society}  84:712--730, 2011.
\bibitem{I2} {\bibname Giovanna Ilardi}, Togliatti systems. {\it Osaka J. Math.},  43(1):1--12, 2006.
\bibitem{LM} {\bibname Antonio Lanteri and Raquel  Mallavibarena}, Osculatory behavior and second dual varieties of Del Pezzo surfaces. {\it Adv. in Geom.}, 2(4):345--363, 2002.
\bibitem{MMN} {\bibname Juan C. Migliore, Rosa  Mir\'o-Roig and Uwe Nagel}, On the weak lefschetz property for powers of linear forms. {\it Algebra and Number theory}, 4:487--526, 2012.
\bibitem{MMO} {\bibname Emilia Mezzetti, Rosa  Mir\'o-Roig and Giorgio Ottaviani},   Laplace Equations and the Weak Lefschetz Property, Canad. J. Math. 65(2013), 634-654.
\bibitem{MN} {\bibname Juan C. Migliore and Uwe Nagel},  A tour of the weak and strong Lefschetz properties. {\it arXiv:1109.5718}, September 2011. To
appear on Journal of Commutative Algebra.
\bibitem{OSS} {\bibname Christian Okonek, Michael Schneider and Heinz Spindler}, {\it Vector bundles on complex projective spaces}.
 Progress in Mathematics, vol. 3,  Birkh\"auser, Boston, Mass., 1980.
\bibitem{OT} {\bibname Peter Orlik and Hiroaki Terao}, {\it  Arrangement of hyperplanes},  volume 300 of {\it Grundlerhen der Mathematischen Wissenschaften [Fundamental
Principles of Mathematical Sciences].} Springer-Verlag, Berlin,  1992.
\bibitem{P} {\bibname David Perkinson}, Inflections of toric varieties.   {\it Mich. Math. J.}, 48:483--516 , 2000.
\bibitem{S} {\bibname Kyoji Saito}, Theory of logarithmic differential forms and logarithmic vector fields. {\it J. Fac. Sci. Univ. Tokyo Sect. IA Math.}, 27(2):265--291, 1980.
\bibitem{Sc} {\bibname Henry K. Schenk},  Elementary modifications and line configurations in $\p^2$.
{\it  Comment. Math. Helv.} 78(3):447--462, 2003.
\bibitem{St} {\bibname Richard P. Stanley},  Weyl groups, the hard Lefschetz theorem, and the Sperner property.
{\it  Siam J. Alg. Disc. Meth.} 1(2):168--184, 1980.
\bibitem{Te} {\bibname Alessandro Terracini}, Sulle $V_k$ che rappresentano pi\`u di $\frac{1}{2} k (k - 1)$ equazioni di Laplace linearmente indipendenti.
{\it Rend. Circ. Mat. Palermo}, 33:176--186, 1912.
\bibitem{To} {\bibname Eugenio Togliatti}, Alcune osservazioni sulle superficie razionali che rappresentano equazioni di Laplace. {\it  Ann. Mat. Pura Appl.} 25(4):325--339, 1946.
\bibitem{V1} {\bibname Jean Vall\`es}, Vari\'et\'es de type Togliatti. {\it C.R.A.S.} 343(6):411--414, 2006.
\bibitem{V2} {\bibname Jean Vall\`es}, Fibr\'es logarithmiques sur le plan projectif. {\it Ann. Fac. Sci. Toulouse Math.} 16(2):385--395, 2007.

\end{thebibliography}
\end{document}